\newcommand{\skipind}[1]{\overset{\widehat{#1}}{\ldots}} 
\newcommand{\empind}[1]{\ldots,\overset{#1}{-},\ldots} 
\newcommand{\cf}{cf.\@}
\DeclareMathOperator{\Ac}{Ac}
\newcommand{\multi}[1]{multi#1}
\newcommand{\multimodelcategories}{multi-model-categories}
\newcommand{\Rsite}{\textbf{RSite}}
\title{Cotorsion Pairs and Quillen Adjunctions}
\author{Ren\'{e} Recktenwald}
\begin{document}


\maketitle
\begin{abstract}
	\noindent Let $F:\Aa\rightleftarrows \Bb:G$ be an adjunction between abelian categories and $\Ch(\Aa)\rightleftarrows \Ch(\Bb)$ the induced adjunction on chain complexes. Let $\Aa$ and $\Bb$ be equipped with sufficiently nice cotorsion pairs, then we find model structures on their categories of chain complexes. This paper gives novel criteria under which the induced adjunction becomes a Quillen adjunction. In particular our criteria can be checked on the level of the underlying abelian categories instead of chain complexes. We do the same for adjunctions of $n$ variables and we also show how our results imply that the flat model structure can be used to compute the derived tensor product.
\end{abstract}
\tableofcontents
\section{Introduction}
	
Let $\Aa$ be an abelian category and let $(\Dd,\Ee)$ be a \emph{cotorsion pair}, i.e. classes of objects such that 
\begin{itemize}
	\item $D\in \Dd$ if and only if $\Ext^1(D,E)=0$ for all $E\in \Ee$;
	\item $E\in \Ee$ if and only if $\Ext^1(D,E)=0$ for all $D\in \Dd$.
\end{itemize}
In \cite{Hovey02} Hovey proves a close relationship between model structures on $\Ch(\Aa)$ and cotorsion pairs on $\Aa$ - each abelian model structure gives rise to two \enquote{compatible} cotorsion pairs on $\Ch(\Aa)$ and (under mild assumptions) vice versa.
In \cite{Gillespie04} and \cite{Gillespie06} Gillespie showed that in many cases it is sufficient to consider one cotorsion pair on $\Aa$, instead of $\Ch(\Aa)$, to obtain an abelian model structure on chain complexes.
He then used this to construct the \emph{flat model structure} for sheaves on a ringed space $(X,\Oo)$ and show that it is monoidal, i.e. it makes the adjunction of two variables $\otimes\dashv \Shom$ into a Quillen adjunction.

It should be noted that the tensor product has two special properties, which cannot be expected solely from the fact that it is a left adjoint:
\begin{enumerate}
	\item When $F$ is flat then $F\otimes - $ is an exact functor.
	\item
		Let $F$ be flat, $M$ be any $\Oo$-module and assume the sequence 
		\[
			0\to A\to B\to F\to 0
		\]
		is exact, then
		\[
			0\to M\otimes A\to M\otimes B\to M\otimes F\to 0
		\]
		is also exact.
\end{enumerate}
Consider, however, the following natural situation. Let $f:(X,\Oo)\to (Y,\Oo')$ be a morphism of ringed spaces. Then there is a functor
\[
	-\otimes f^*-:\Sh(X,\Oo)\times \Sh(Y,\Oo')\to \Sh(X,\Oo)
\]
which fails both of the above properties.

The purpose of this paper is to generalize and simplify results for the $\otimes\dashv \Shom$ adjunction to general adjunctions of $n$-variables. We prove the following (see \cref{thm:cot-main} for the precise statement)
\begin{thm*}
	Let $\Aa_0,\ldots, \Aa_n$ be abelian categories with cotorsion pairs $(\Dd_i,\Ee_i)$ and let 
	\[
		F:\Aa_1\times\ldots\times \Aa_n\to \Aa_0
	\]
	be an $n$-variable left adjoint. Assume the $\Aa_i$ and $(\Dd_i,\Ee_i)$ satisfy some mild conditions and
	\begin{enumerate}[label=(\alph*)]
		\item Let $1\leq j\leq n$ be arbitrary. For each short exact sequence in $\Aa_j$ of the form
			\[
				0\to A_j \to B_j\to \underbrace{D_j}_{\in \Dd_j}\to 0
			\]
			the sequence
			\[
				0\to F(D_1,\ldots, A_j,\ldots, D_n) \to F(D_1,\ldots, B_j,\ldots, D_n)\to F(D_1,\ldots, D_j,\ldots, D_n)\to 0
			\]
			is exact for each collection $D_i\in \Dd_i$.
		\item
			For each collection of objects $D_i\in \Dd_i$ it is $F(D_1,\ldots,D_n)\in \Dd_0$.
	\end{enumerate}
	Then $F$ induces a Quillen adjunction of $n$-variables on chain complexes.
\end{thm*}
Note that both conditions can be checked on the underlying abelian category, instead of the category of chain complexes. 

In \cref{sec:applications} we reprove Gillespie's result that the flat model structure in monoidal. Also we show that $f^*\dashv f_*$ is a Quillen adjunction for the flat model structure, which to the best of our knowledge has not appeared in the literature yet - even though it has probably been known before.

In our thesis we \cite{Recktenwald2019} use this result to show that there is a general derivator four-functor formalism for ringed sites in the language of \cite{Hormann2018}. We will then use work by H\"{o}rmann (\cite{1701.02152}, \cite{1902.03625}) to extend this to concrete examples of derivator six functor formalisms.

We start by introducing the reader to the necessary notions in \sref{Section}{sec:setting-and-notation}. 

Then we consider the one variable case.

In \sref{Section}{sec:n-var-case} we lay down the basic definitions for adjunctions of $n$ variables, generalize Hovey's criterion \cite[Theorem 7.2]{Hovey02} from the case of the tensor product to general adjunctions, and prove our main result.

\section{Setting and Notation}\label{sec:setting-and-notation}
	In this section we recall the basic notions and results on cotorsion pairs and abelian model categories. We assume that the reader is familiar with model categories and Quillen adjunctions (see \cite{Hovey99}). For a more thorough treatment of cotorsion pairs, see \cite{EnochsJenda00}.

For all abelian categories $\Aa$ we will always assume that $\Ext^n_\Aa(A,B)$ is a set for all $n\in \N$ and all objects $A$ and $B$ in $\Aa$. This is in particular the case if the category has enough projective or injective objects.

\subsection{Cotorsion Pairs}
	
	\begin{defi}\label{defi:cot-pair}
		Let $\Aa$ be an abelian category and $\Dd$, $\Ee$ be classes of objects. 
		We write
		\begin{align*}
		\Dd^\perp &= \set{X\in \Aa\where \Ext^1(D,X)=0 \text{ for all } D\in \Dd} \text{ and } \\
		{}^\perp\Ee &= \set{X\in \Aa\where \Ext^1(X,E)=0 \text{ for all } E\in \Ee}.
		\end{align*}
		We say $(\Dd,\Ee)$ is a \emph{cotorsion pair} if $\Dd^\perp=\Ee$ and ${}^\perp\Ee=\Dd$. We define the following properties of cotorsion pairs:
		\begin{enumerate}
			\item
			If there is a set $\Ss\subset \Dd$ such that $\Ss^\perp=\Ee$, we say that $(\Dd,\Ee)$ is \emph{cogenerated by a set}.
			\item 
			If for any object $X$ of $\Aa$ there is a short exact sequence
			\[
			0\to X\to E\to D\to 0
			\]
			with $D\in \Dd$ and $E\in \Ee$, then we say that $(\Dd,\Ee)$ has \emph{enough injectives}.
			\item
			If for any object $X$ of $\Aa$ there is a short exact sequence
			\[
			0\to E\to D\to X\to 0
			\]
			with $D\in \Dd$ and $E\in \Ee$, then we say that $(\Dd,\Ee)$ has \emph{enough projectives}.
			\item
			If $(\Dd,\Ee)$ has enough projectives and enough injectives, then we call the cotorsion pair \emph{complete}.
			\item 
			If for all $n\geq 1$ and all $D\in \Dd$ and $E\in \Ee$ it is
			\[
			\Ext^n(D,E)=0,
			\]
			then $(\Dd,\Ee)$ is called \emph{hereditary}.
		\end{enumerate}
	\end{defi}
	
	It is clear that if $(\Dd,\Ee)$ is a cotorsion pair every projective object of $\Aa$ is in $\Dd$ and dually every injective object is in $\Ee$. Also if $(\Dd,\Ee)$ is a cotorsion pair on $\Aa$, then $(\Ee,\Dd)$ is a cotorsion pair on $\Aa^{op}$.
	
	\begin{example}
		For every abelian category $\Aa$ the pair $(\Ob(\Aa),\text{injectives})$ is a cotorsion pair with enough projectives, as
		\[
		0\to 0\to X\to X\to 0
		\]
		shows. It has enough injectives if and only if the category $\Aa$ has enough injectives.
	\end{example}
	\begin{example}
		Let $R$ be a ring and denote by $\Ff$ the class of flat modules over $R$. Then $(\Ff,\Ff^\perp)$ is a complete and hereditary cotorsion pair, which is cogenerated by a set. None of these statements is trivial. See for example \cite[Theorem 1.1]{EnochsOyonarte01}. 
		\\
		The same is true for ringed topological spaces (see \cite[Section 4]{Gillespie06}) and even for small ringed sites (see \cite[Section 2.5]{Recktenwald2019})
	\end{example}
	
	\begin{emp}
		There are several different properties, which are usually referred to as having \enquote{enough objects of a certain type}:
		\begin{enumerate}
			\item $\Aa$ has enough projectives (resp.\@ injectives).
			\item $(\Dd,\Ee)$ has enough projectives (resp.\@ injectives).
			\item Every object of $\Aa$ is a quotient (resp.\@ subobject) of an object of $\Dd$ (resp.\@ $\Ee$).
		\end{enumerate}
		
		Note that clearly each of the first two properties implies the third. \cref{prop:enough-of-one-implies-complete} gives a partial reversal of this fact.
	\end{emp}
	
	\begin{lemma} 
		Let $(\Dd,\Ee)$ be a cotorsion pair. Then $\Dd$ and $\Ee$ are closed under extensions.
	\end{lemma}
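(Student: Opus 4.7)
The plan is a direct application of the long exact sequence of $\Ext$-groups associated to a short exact sequence, together with the defining $\Ext^1$-orthogonality of a cotorsion pair. I will treat $\Dd$ and $\Ee$ separately; the arguments are dual.

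For closure of $\Dd$ under extensions, I suppose we have a short exact sequence
\[
	0\to D'\to X\to D''\to 0
\]
with $D',D''\in \Dd$, and I aim to show $X\in \Dd$. By the characterization $\Dd={}^\perp\Ee$ it is enough to verify that $\Ext^1(X,E)=0$ for every $E\in \Ee$. Applying the contravariant functor $\Hom(-,E)$ to the above sequence yields a long exact sequence whose relevant portion reads
\[
	\Ext^1(D'',E)\to \Ext^1(X,E)\to \Ext^1(D',E).
\]
Both outer terms vanish because $D',D''\in \Dd$ and $E\in \Ee$, so $\Ext^1(X,E)=0$.

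For closure of $\Ee$ under extensions I apply the dual argument: given $0\to E'\to Y\to E''\to 0$ with $E',E''\in \Ee$ and any $D\in \Dd$, the long exact sequence arising from $\Hom(D,-)$ contains
\[
	\Ext^1(D,E')\to \Ext^1(D,Y)\to \Ext^1(D,E''),
\]
whose outer terms vanish by the cotorsion pair property, forcing $\Ext^1(D,Y)=0$ and hence $Y\in \Dd^\perp=\Ee$.

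There is essentially no obstacle here beyond invoking the existence of the long exact $\Ext$-sequence in a general abelian category; this is exactly why the paper earlier imposed the standing hypothesis that all $\Ext^n_\Aa(A,B)$ are sets, ensuring the derived functor machinery is available and the connecting homomorphisms exist as honest maps. Once that is in hand, the lemma is immediate from the definition of a cotorsion pair.
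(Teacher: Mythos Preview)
Your proof is correct and follows essentially the same approach as the paper: apply the long exact $\Ext$-sequence to the given short exact sequence, use the $\Ext^1$-orthogonality from the definition of a cotorsion pair to kill the outer terms, and conclude that the middle term lies in $\Dd$ (respectively $\Ee$). The paper treats $\Dd$ explicitly and then notes that the argument for $\Ee$ is similar, just as you do.
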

	\begin{proof}
		Let 
		\[
		0\to D\to A\to D'\to 0
		\]
		be a short exact sequence with $D,D'\in \Dd$. For any $E\in \Ee$ we have an exact sequence
		\[
		\ldots\to \underbrace{\Ext^1(D',E)}_{=0}\to \Ext^1(A,E)\to \underbrace{\Ext^1(D,E)}_{=0}\to \ldots
		\]
		Hence $A\in \Dd$. The proof for $\Ee$ is similar.
	\end{proof}
	
	\begin{prop}\label{prop:enough-of-one-implies-complete} 
		Let $\Aa$ be an abelian category and $(\Dd,\Ee)$ a cotorsion pair with enough injectives (resp.\@ projectives).
		If every object in $\Aa$ is a quotient of an object in $\Dd$ (resp.\@ a subobject of an object in $\Ee$), then $(\Dd,\Ee)$ is complete.
	\end{prop}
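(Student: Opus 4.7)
The plan is to prove the version with enough injectives plus the hypothesis that every object is a quotient of an object in $\Dd$; the other case follows by duality (pass to $\Aa^{op}$ and use that $(\Ee,\Dd)$ is a cotorsion pair there with enough projectives in which every object is a subobject of something in $\Dd$ after op-ing, i.e.\ a quotient in the opposite direction). So it suffices to construct, for an arbitrary $X \in \Aa$, a short exact sequence $0 \to E \to P \to X \to 0$ with $P \in \Dd$ and $E \in \Ee$.

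First I would pick, by hypothesis, an epimorphism $D \twoheadrightarrow X$ with $D \in \Dd$, and write its kernel sequence
\[
	0 \to K \to D \to X \to 0.
\]
Then I apply the enough-injectives property of $(\Dd,\Ee)$ to the object $K$ to obtain
\[
	0 \to K \to E \to D' \to 0
\]
with $E \in \Ee$ and $D' \in \Dd$. The crucial construction is the pushout $P$ of $D \leftarrow K \to E$, which fits into a commutative diagram with exact rows and columns of the standard shape, yielding simultaneously
\[
	0 \to E \to P \to X \to 0 \quad \text{and} \quad 0 \to D \to P \to D' \to 0.
\]
(The first sequence exists because the pushout along a monomorphism has the original cokernel; the second because cokernels are preserved by pushout.)

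Since $D, D' \in \Dd$ and $\Dd$ is closed under extensions by the preceding lemma, the second sequence shows $P \in \Dd$. Then the first sequence is exactly the enough-projectives witness for $X$. As $X$ was arbitrary, $(\Dd,\Ee)$ has enough projectives and is therefore complete.

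There is no real obstacle: the only nontrivial ingredient is the pushout trick (a version of Salce's lemma), and once one recognizes it the verification is purely diagrammatic, using only the closure of $\Dd$ under extensions. The dual statement is formally identical, so I would just remark that it follows by passing to $\Aa^{op}$.
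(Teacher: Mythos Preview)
Your proof is correct and essentially identical to the paper's own argument: both pick an epimorphism from some $D\in\Dd$, apply enough injectives to its kernel, form the pushout, and use closure of $\Dd$ under extensions to conclude. The paper's diagram and your two short exact sequences encode the same pushout trick (Salce's lemma), so there is nothing to add.
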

	\begin{proof}
		Assume $(\Dd,\Ee)$ has enough injectives and that every object in $\Aa$ is a quotient of an object in $\Dd$. Let $A\in \Aa$ be an object. By assumption we find $D\to A\to 0$ with $D\in \Dd$. Let $K$ denote the kernel of $D\to A$, i.e.\@ 
		\[
		0\to K\to D\to A\to 0
		\]
		is a short exact sequence. By assumption we find $E\in \Ee$ and $D'\in \Dd$ to make this into an exact diagram
		\[
		\begin{tikzcd}
		& 0 \ar[d] & 0\ar[d]\\
		0\ar[r] & K\ar[r]\ar[d] & D\ar[r] \ar[d]& A\ar[r]\ar[d,equal] & 0\\
		0\ar[r]& E\ar[d]\ar[r] & \square\ar[d]\ar[r] & A\ar[r] & 0\\
		& D'\ar[d]\ar[r,equal] & D'\ar[d]\\
		& 0 & 0
		\end{tikzcd}
		\]
		where $\square$ denotes the pushout. Since $\Dd$ is closed under extension $0\to E\to \square\to A\to 0$ shows that $(\Dd,\Ee)$ has enough projectives.
	\end{proof}
	
	We will now study hereditary cotorsion pairs in more detail.
	
	\begin{defi}
		Let $(\Dd,\Ee)$ be a cotorsion pair.
		\begin{enumerate}
			\item If $\Dd$ is closed under kernels of epimorphisms, then $\Dd$ is called \emph{resolving}.
			\item If $\Ee$ is closed under cokernels of monomorphisms, then $\Ee$ is called \emph{coresolving}.
		\end{enumerate}
	\end{defi}
	
	\begin{lemma}\label{lem:ext2=0-implies-co-resolving} 
		Let $(\Dd,\Ee)$ be a cotorsion pair and assume $\Ext^2(D,E)=0$ for all $D\in \Dd$ and $E\in \Ee$. Then $\Dd$ is resolving and $\Ee$ is coresolving.
	\end{lemma}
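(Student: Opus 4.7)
The plan is to reduce both claims to a direct computation with the long exact sequence of $\Ext$, using the characterization of $\Dd$ and $\Ee$ given by the cotorsion pair: an object $A$ lies in $\Dd$ iff $\Ext^1(A,E)=0$ for all $E\in\Ee$, and dually for $\Ee$. The extra hypothesis $\Ext^2(D,E)=0$ for $D\in\Dd$, $E\in\Ee$ will provide exactly the vanishing needed to propagate $\Ext^1$-vanishing along an extra term in the sequence.

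For the resolving claim, I would start with a short exact sequence
\[
0\to A\to B\to C\to 0
\]
in $\Aa$ with $B,C\in \Dd$, and fix an arbitrary $E\in\Ee$. Applying $\Hom(-,E)$ produces a long exact sequence containing the fragment
\[
\Ext^1(B,E)\to \Ext^1(A,E)\to \Ext^2(C,E).
\]
The left term vanishes because $B\in\Dd$ and $E\in\Ee$, and the right term vanishes by the hypothesis since $C\in\Dd$ and $E\in\Ee$. Hence $\Ext^1(A,E)=0$ for every $E\in\Ee$, and the cotorsion pair identity $\Dd={}^\perp\Ee$ gives $A\in\Dd$.

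The coresolving claim is dual. Starting from $0\to A\to B\to C\to 0$ with $A,B\in\Ee$ and a fixed $D\in\Dd$, the functor $\Hom(D,-)$ yields a long exact sequence with fragment
\[
\Ext^1(D,B)\to \Ext^1(D,C)\to \Ext^2(D,A),
\]
whose outer terms vanish for the same reasons as above. Thus $\Ext^1(D,C)=0$ for every $D\in\Dd$, and $\Ee=\Dd^\perp$ forces $C\in\Ee$.

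There is really no obstacle here beyond having the $\Ext^2$-hypothesis available: once the long exact sequence is written down, both claims drop out immediately. The only point worth highlighting is that the hypothesis is used in each direction with the \emph{same} pair of classes on the \emph{same} side of $\Ext^2$, which is why hereditariness (where $\Ext^n$ vanishes for all $n\ge 1$) is the natural strengthening this lemma anticipates.
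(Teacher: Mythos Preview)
Your proof is correct and is essentially identical to the paper's argument: both apply $\Hom(-,E)$ (resp.\ $\Hom(D,-)$) to the given short exact sequence and read off the vanishing of $\Ext^1$ for the kernel (resp.\ cokernel) from the long exact sequence, using $\Ext^1(B,E)=0$ and the hypothesis $\Ext^2(C,E)=0$. The only difference is notation.
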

	\begin{proof}
		Let $0\to A\to D\to D'\to 0$ be a short exact sequence with $D,D'\in \Dd$. For $E\in \Ee$ we find an exact sequence
		\[
		\ldots\to \underbrace{\Ext^1(D,E)}_{=0}\to \Ext^1(A,E)\to \underbrace{\Ext^2(D',E)}_{=0}\to \ldots
		\]
		Hence $\Ext^1(A,E)=0$ for all $E\in \Ee$ and we conclude $A\in \Dd$. The dual argument shows that $\Ee$ is coresolving.
	\end{proof}
	In particular if $(\Dd,\Ee)$ is hereditary, then $\Dd$ is resolving and $\Ee$ is coresolving.
	\begin{lemma}\label{lem:hereditary-criterion-II}
		Let $\Aa$ be an abelian category and $(\Dd,\Ee)$ a cotorsion pair.
		\begin{enumerate}
			\item Assume $\Aa$ has enough injectives and $\Ee$ is coresolving. Then $(\Dd,\Ee)$ is hereditary.
			\item Assume $\Aa$ has enough projectives and $\Dd$ is resolving. Then $(\Dd,\Ee)$ is hereditary.
		\end{enumerate}
	\end{lemma}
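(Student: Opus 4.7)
The plan is to prove part 1 by induction on $n$, using dimension shifting via injective resolutions, and to deduce part 2 by a dual argument. The base case $n=1$ is built into the definition of a cotorsion pair, so the work lies in the inductive step.

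For the inductive step of part 1, fix $E \in \Ee$ and use that $\Aa$ has enough injectives to obtain a short exact sequence
\[
    0 \to E \to I \to E' \to 0
\]
with $I$ injective. Since every injective object lies in $\Ee$ (as noted after \cref{defi:cot-pair}), we have $I \in \Ee$, and then the hypothesis that $\Ee$ is coresolving forces $E' \in \Ee$. Now for any $D \in \Dd$ the long exact sequence obtained by applying $\Ext^*(D,-)$ contains
\[
    \Ext^{n}(D, I) \to \Ext^{n}(D, E') \to \Ext^{n+1}(D, E) \to \Ext^{n+1}(D, I),
\]
and the outer terms vanish for $n \geq 1$ because $I$ is injective. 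Hence $\Ext^{n+1}(D, E) \cong \Ext^{n}(D, E')$, and the inductive hypothesis applied to $E' \in \Ee$ gives the desired vanishing. Together with the base case $\Ext^1(D,E) = 0$ from the cotorsion pair axioms, this proves part 1.

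Part 2 is formally dual: now $\Aa$ has enough projectives, so for any $D \in \Dd$ one picks a short exact sequence $0 \to D' \to P \to D \to 0$ with $P$ projective; since projectives are in $\Dd$ and $\Dd$ is resolving, $D' \in \Dd$, and the same dimension-shift in the first argument of $\Ext$ yields $\Ext^{n+1}(D,E) \cong \Ext^{n}(D',E)$ for $n \geq 1$, completing the induction.

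The only subtle point, and the one place where the hypotheses are genuinely used, is verifying that the cosyzygy $E'$ (respectively the syzygy $D'$) stays in $\Ee$ (respectively $\Dd$); this is precisely what the coresolving/resolving assumption together with "enough injectives/projectives" is designed to deliver. Everything else is a standard dimension-shift argument that closely mirrors the proof of \cref{lem:ext2=0-implies-co-resolving}.
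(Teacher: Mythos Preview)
Your proof is correct and follows essentially the same approach as the paper: induction on $n$, dimension shifting via an injective embedding $0\to E\to I\to E'\to 0$, using that $I\in\Ee$ and coresolving to ensure $E'\in\Ee$, and then reading off the vanishing from the long exact sequence. The only cosmetic difference is that the paper uses the three-term segment $\Ext^n(D,E')\to\Ext^{n+1}(D,E)\to\Ext^{n+1}(D,I)$ directly, whereas you phrase it as an isomorphism $\Ext^{n+1}(D,E)\cong\Ext^n(D,E')$; either way the argument is the same.
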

	\begin{proof}
		We show $\Ext^n(D,E)=0$ by induction on $n$. The case $n=1$ follows by assumption. Let $E\in \Ee$ be arbitrary and let 
		\[
		0\to E\to I\to E'\to 0
		\]
		be a short exact sequence with $I$ injective. Since $\Ee$ is coresolving and $I\in \Ee$ it is $E'\in \Ee$. For any $D\in \Dd$ we find an exact sequence
		\[
		\ldots\to \Ext^n(D,E')\to \Ext^{n+1}(D,E)\to \Ext^{n+1}(D,I)\to \ldots
		\]
		By induction $\Ext^n(D,E')=0$ and because $I$ is injective $\Ext^{n+1}(D,I)=0$. Hence $\Ext^{n+1}(D,E)=0$ as desired. 
		
		The second statement is dual.
	\end{proof}
	
	\begin{lemma}[{\cite[Lemma 2.3]{Gillespie2016}}]\label{lem:hereditary-criterion}
		Let $(\Dd,\Ee)$ be a cotorsion pair. 
		\begin{enumerate}
			\item If $\Dd$ is resolving and each object of $\Aa$ is a quotient of an object in $\Dd$, then $\Ext^2(D,E)=0$ for all $D\in \Dd$ and $E\in \Ee$.
			\item If $\Ee$ is coresolving and each object of $\Aa$ is a subobject of an object in $\Ee$, then $\Ext^2(D,E)=0$ for all $D\in \Dd$ and $E\in \Ee$.
		\end{enumerate}
	\end{lemma}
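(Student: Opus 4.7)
The plan is to prove both parts by representing a class $\xi\in\Ext^2(D,E)$ as a 4-term exact sequence $0\to E\to X\to Y\to D\to 0$ and then modifying this representative until its Yoneda decomposition $\xi=\alpha\cdot\beta$, with $\alpha\in\Ext^1(M,E)$ and $\beta\in\Ext^1(D,M)$ (where $M$ is the middle term $\mathrm{im}(X\to Y)=\ker(Y\to D)$), forces $\alpha=0$ or $\beta=0$. The key is to arrange $M\in\Dd$ in case (1) (so that $\alpha=0$) and $M\in\Ee$ in case (2) (so that $\beta=0$).

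For (1), set $Z=\ker(Y\to D)$. Using the hypothesis that every object is a quotient of a $\Dd$-object, I pick an epimorphism $\pi\colon D''\twoheadrightarrow Y$ with $D''\in\Dd$. The composite $D''\twoheadrightarrow D$ then has kernel $V:=\pi^{-1}(Z)$ sitting in a short exact sequence $0\to V\to D''\to D\to 0$, and since $\Dd$ is resolving and $D,D''\in\Dd$, it follows that $V\in\Dd$. Forming the pullback $P:=X\times_Z V$ yields an exact sequence $0\to E\to P\to D''\to D\to 0$, and the projections $P\to X$ and $\pi\colon D''\to Y$ assemble into a morphism of 2-extensions onto the original; hence both represent $\xi$. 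Splitting this new representative at its middle term $V$ gives the Yoneda factor $\alpha\in\Ext^1(V,E)=0$, so $\xi=0$.

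Part (2) proceeds dually: using that every object is a subobject of an $\Ee$-object, I embed $X\hookrightarrow E''$ with $E''\in\Ee$, observe that $F:=E''/E\in\Ee$ by coresolvingness, and replace $Y$ by the pushout $Q:=Y\sqcup_Z F$. The resulting sequence $0\to E\to E''\to Q\to D\to 0$ is a 2-extension representing $\xi$ whose middle term $F$ lies in $\Ee$, so that now $\beta\in\Ext^1(D,F)=0$ and $\xi=0$.

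The main technical point—and essentially the only place real work happens—is verifying that the modified 4-term sequences are genuine 2-extensions and that the vertical maps constitute a morphism of 2-extensions, so the Yoneda class is actually preserved. This reduces to several routine diagram chases checking exactness at each term, surjectivity (resp.\ injectivity) of the relevant pullback (resp.\ pushout) legs, and commutativity of the comparison squares in an abelian category; no hypothesis beyond \enquote{$\Dd$ resolving plus enough $\Dd$-covers} (resp.\ its dual) enters.
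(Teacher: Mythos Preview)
Your proof is correct and follows essentially the same argument as the paper: cover the middle object by something in $\Dd$, use resolvingness to see that the kernel lies in $\Dd$, pull back to obtain an equivalent $2$-extension whose Yoneda splice passes through an object of $\Dd$, and conclude that the $\Ext^1$-factor vanishes. The paper only writes out part (1) and declares part (2) dual, whereas you spell out the dual construction explicitly, but the underlying strategy is identical.
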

	\begin{proof}
		We only show the first statement as the other one is dual.\\
		Let $D\in \Dd$ and $E\in \Ee$ and fix an exact sequence
		\[
		0\to E\to A\to B\to D\to 0.
		\]
		We write $X=\im (A\to B)=\ker(B\to D)$. By assumption we find an object $D'\in \Dd$ with an epimorphism $D'\to B$. Consider
		\[
		\begin{tikzcd}
		0\ar[r] & \tilde{D}\ar[r]\ar[d] & D'\ar[r]\ar[d] & D\ar[d,equal]\ar[r] & 0\\
		0\ar[r] & X \ar[r] & B\ar[r] & D\ar[r] & 0.
		\end{tikzcd}
		\]
		where $\tilde{D}$ denotes the pullback. By assumption $\tilde{D}\in \Dd$. Since $\tilde{D}\to X$ is surjective we obtain the following exact diagram
		\[
		\begin{tikzcd}
		0\ar[r]& E\ar[r]\ar[d,equal]& \tilde{A} \ar[r]\ar[d] & \tilde{D}\ar[r]\ar[d] & 0\\
		0\ar[r] & E\ar[r] & A\ar[r] & X\ar[r] & 0
		\end{tikzcd}
		\]
		Note that since $\tilde{D}\in \Dd$ and $E\in \Ee$ the top row splits. Via concatenation we see that the following extensions are equivalent in $\Ext^2(D,E)$.
		\[
		\begin{tikzcd}
		0\ar[r] & E\ar[r]\ar[d,equal] & \tilde{A}\ar[r]\ar[d] & D'\ar[r]\ar[d] & D\ar[r]\ar[d,equal] & 0\\
		0\ar[r] & E\ar[r]& A\ar[r] & B\ar[r] & D\ar[r]& 0
		\end{tikzcd}
		\] 
		However since the top row is built from a split short exact sequence it is trivial. To see this recall that concatenation distributes over sums, i.e.\@ $\Ext^1(\tilde{D},E)\times \Ext^1(D,\tilde{D})\to \Ext^2(D,E)$ is bilinear.
	\end{proof}
	
	\begin{coro}\label{coro:true-hereditary-criterion}
		Let $\Aa$ be a Grothendieck abelian category and $(\Dd,\Ee)$ a cotorsion pair. If $\Dd$ is resolving and every object of $\Aa$ is a quotient of an object of $\Dd$, then $(\Dd,\Ee)$ is hereditary.
	\end{coro}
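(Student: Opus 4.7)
The plan is to assemble the corollary from the lemmas already established in the excerpt, using the Grothendieck hypothesis only to secure enough injectives.

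First I would apply \cref{lem:hereditary-criterion}(1): since $\Dd$ is resolving by assumption and every object of $\Aa$ is a quotient of an object of $\Dd$, we immediately obtain $\Ext^2(D,E)=0$ for all $D\in\Dd$ and $E\in\Ee$. Next I would invoke \cref{lem:ext2=0-implies-co-resolving} to conclude from this $\Ext^2$-vanishing that $\Ee$ is coresolving (the resolving conclusion about $\Dd$ is already known).

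Now the Grothendieck hypothesis enters: a Grothendieck abelian category has enough injectives (a classical theorem, cf.\@ the standard references on abelian categories). Consequently, \cref{lem:hereditary-criterion-II}(1) applies to $(\Dd,\Ee)$, yielding $\Ext^n(D,E)=0$ for all $n\geq 1$ by induction on $n$ as in its proof. This is exactly the definition of hereditary.

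There is no genuine obstacle here since all the work is contained in the preceding lemmas; the only point to note is that Grothendieckness is used purely as a device to guarantee enough injectives, so that the inductive argument of \cref{lem:hereditary-criterion-II} can be run by embedding objects of $\Ee$ into injectives and using that $\Ee$ is closed under cokernels of monomorphisms.
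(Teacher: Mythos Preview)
Your proof is correct and follows essentially the same route as the paper: apply \cref{lem:hereditary-criterion}(1) to get $\Ext^2(D,E)=0$, then \cref{lem:ext2=0-implies-co-resolving} to obtain that $\Ee$ is coresolving, and finally \cref{lem:hereditary-criterion-II}(1) using that a Grothendieck category has enough injectives. The paper leaves the role of the Grothendieck hypothesis implicit, but it is used exactly as you say.
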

	\begin{proof}
		By \cref{lem:hereditary-criterion} we see that $\Ext^2(D,E)=0$ for any $D\in \Dd$ and $E\in \Ee$. Therefore, by \cref{lem:ext2=0-implies-co-resolving} $\Ee$ is coresolving. Then \cref{lem:hereditary-criterion-II} implies the desired result.
	\end{proof}

\subsection{Abelian model categories}

	In this section we will explain the relationship between cotorsion pairs and model structures on chain complexes. This connection has been established in \cite{Hovey02} and was then refined in \cite{Gillespie04} and \cite{Gillespie06}. We refer interested readers, who want to learn more about this specific topic, to the nice introduction in \cite{Hovey07}.
	
	\begin{defi}
		Let $\Aa$ be an abelian category. A model structure on $\Ch(\Aa)$ is called abelian if the weak equivalences $\Ww$ are the quasi-isomorphisms and
		\begin{itemize}
			\item cofibrations are precisely monomorphisms with cofibrant cokernel;
			\item fibrations are precisely epimorphisms with fibrant kernel.
		\end{itemize}
	\end{defi}
	In an abelian model category $\Ch(\Aa)$ the acyclic complexes are precisely the objects, that are weakly equivalent to the zero object. We will denote them by $\Ac$.
	
	\begin{lemma}\label{lem:trivial-cofib}
		Let $\Ch(\Aa)$ be an abelian model category. Then
		\begin{enumerate}
			\item trivial cofibrations are precisely monomorphisms with acyclic cofibrant cokernel;
			\item trivial fibrations are precisely epimorphisms with acyclic fibrant kernel.
		\end{enumerate}
	\end{lemma}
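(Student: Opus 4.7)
The plan is to unfold the definitions and then read off both directions from the long exact sequence in homology associated to the short exact sequence of complexes given by kernel or cokernel.

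First I would note the key observation: since $\Ww$ consists of the quasi-isomorphisms, a complex $C$ is acyclic, in the sense of being weakly equivalent to $0$, if and only if $H_n(C)=0$ for all $n\in \Z$. Indeed, the map $0\to C$ is a quasi-isomorphism exactly when all homology objects of $C$ vanish.

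For part (1), a trivial cofibration is by definition a cofibration which is also a weak equivalence. By the abelian model category axioms, a cofibration $f:A\to B$ is a monomorphism with cofibrant cokernel $C$, so we have a short exact sequence
\[
	0\to A\to B\to C\to 0.
\]
This induces a long exact sequence
\[
	\ldots\to H_{n+1}(C)\to H_n(A)\to H_n(B)\to H_n(C)\to H_{n-1}(A)\to \ldots
\]
Reading from this sequence, $f$ is a quasi-isomorphism if and only if $H_n(C)=0$ for all $n$, i.e.\ if and only if $C$ is acyclic. Thus $f$ is a trivial cofibration if and only if it is a monomorphism with cofibrant cokernel $C$ which is additionally acyclic, which is the claim.

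Part (2) is entirely dual: a trivial fibration is an epimorphism $g:B\to A$ with fibrant kernel $K$, giving a short exact sequence $0\to K\to B\to A\to 0$, and the same long exact sequence argument shows that $g$ is a quasi-isomorphism if and only if $K$ is acyclic. I do not anticipate a real obstacle here; the only subtlety is identifying the model-theoretic notion of acyclic with the usual homological one, but this is immediate from the definition of weak equivalences as quasi-isomorphisms.
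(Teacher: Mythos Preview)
Your proof is correct and follows essentially the same approach as the paper: both arguments use the long exact sequence in homology associated to the short exact sequence $0\to A\to B\to C\to 0$ (respectively $0\to K\to B\to A\to 0$) to show that the cofibration (respectively fibration) is a quasi-isomorphism precisely when the cokernel (respectively kernel) is acyclic. Your write-up is in fact slightly more explicit about the biconditional structure than the paper's version.
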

	\begin{proof}
		We only show the first statement. Let $f:A^\bu\to B^\bu$ be a trivial cofibration. By definition this means that there is a short exact sequence
		\[
		0\to A^\bu\to B^\bu\to C^\bu\to 0
		\]
		where $C^\bu$ is cofibrant. We apply the long exact sequence of homology to see that $C^\bu$ is exact. \\
		On the other hand, given a monomorphism $f:A^\bu\to B^\bu$ with exact cokernel the long exact sequence shows that $f$ is a quasi-isomorphism.
	\end{proof}

	\begin{thm}[{Hovey \cite[Theorem 2.2]{Hovey02}}]\label{thm:cot-pairs-and-model-cats}
		Let $ \Ch(\Aa)$ be an abelian model category. Denote by $\Ac$ the class of acyclic complexes, by $\Dd$ the class of cofibrant objects and by $\Ee$ the class of fibrant objects. Then $(\Dd\cap \Ac,\Ee)$ and $(\Dd,\Ee\cap \Ac)$ are complete cotorsion pairs on $\Ch(\Aa)$.
		
		Conversely given classes $\Dd,\Ee$ such that $(\Dd\cap \Ac,\Ee)$ and $(\Dd,\Ee\cap\Ac)$ are complete cotorsion pairs on $\Ch(\Aa)$, then there exists an abelian model category structure on $\Ch(\Aa)$ where $\Dd$ are the cofibrant, and $\Ee$ are the fibrant objects.
	\end{thm}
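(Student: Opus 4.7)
Both directions hinge on the standard identification of $\Ext^1(C,A)$ with equivalence classes of short exact sequences $0\to A\to X\to C\to 0$, so that $\Ext^1(C,A)=0$ if and only if every such sequence splits. Combined with \cref{lem:trivial-cofib}, which characterises (trivial) cofibrations as monomorphisms with (acyclic) cofibrant cokernel and dually for fibrations, the whole theorem becomes a systematic translation between lifting problems on the one side and $\Ext^1$-vanishing on the other.

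For the forward direction, I would focus on $(\Dd,\Ee\cap\Ac)$; the other pair is symmetric. To see $\Ee\cap\Ac\subseteq \Dd^\perp$, take $D\in\Dd$, $E\in\Ee\cap\Ac$ and any short exact sequence $0\to E\to X\to D\to 0$: by \cref{lem:trivial-cofib} the map $X\to D$ is a trivial fibration and $0\to D$ is a cofibration, so the resulting lift $D\to X$ splits the sequence. For the reverse inclusion $\Dd^\perp\subseteq\Ee\cap\Ac$, factor $X\to 0$ as a cofibration followed by a trivial fibration $X\to Y\to 0$; then $Y\in\Ee\cap\Ac$ and the sequence $0\to X\to Y\to D\to 0$ splits by hypothesis, so $X$ is a retract of $Y$, hence fibrant and acyclic. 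The identity ${}^\perp(\Ee\cap\Ac)=\Dd$ is dual, using the factorization of $0\to X$ as cofibration followed by trivial fibration. These same two factorizations immediately supply enough projectives and enough injectives for $(\Dd,\Ee\cap\Ac)$. The pair $(\Dd\cap\Ac,\Ee)$ is treated identically with the word pattern "cofibration followed by trivial fibration" replaced throughout by "trivial cofibration followed by fibration".

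For the converse, I would \emph{define} cofibrations and fibrations to be monomorphisms and epimorphisms with cokernel in $\Dd$ and kernel in $\Ee$ respectively, and take the weak equivalences to be the quasi-isomorphisms, and then verify the model category axioms. Two-out-of-three and the retract axiom are routine, since classes in a cotorsion pair are closed under retracts and quasi-isomorphisms have both properties. The lifting axioms are the heart of the dictionary: given a cofibration $A\hookrightarrow B$ with cokernel $D\in\Dd$ and a trivial fibration $X\twoheadrightarrow Y$, which by \cref{lem:trivial-cofib} has kernel $E\in\Ee\cap\Ac$, any commutative square determines by a pullback-then-pushout construction an obstruction class in $\Ext^1(D,E)$ which vanishes by the cotorsion pair hypothesis on $(\Dd,\Ee\cap\Ac)$, yielding the lift; the dual lifting axiom uses $(\Dd\cap\Ac,\Ee)$. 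The main obstacle I anticipate is the factorization axiom: an arbitrary morphism $f\colon A\to B$ must be factored both as cofibration followed by trivial fibration and as trivial cofibration followed by fibration, which requires combining the enough-projectives (resp.\@ enough-injectives) sequence for a suitable auxiliary object with a pushout or pullback along $f$ to produce a factorization of $f$ itself, and then invoking \cref{lem:trivial-cofib} to verify the acyclicity of the intermediate term.
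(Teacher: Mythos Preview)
The paper does not give its own proof of this theorem: it is stated with attribution to Hovey \cite[Theorem 2.2]{Hovey02} and used as a black box, so there is no proof in the paper to compare against. Your outline is essentially the standard argument one finds in Hovey's paper, translating between lifting properties and $\Ext^1$-vanishing via the characterisation of (trivial) cofibrations and fibrations in terms of their (co)kernels.

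One small point of hygiene: in the converse direction you invoke \cref{lem:trivial-cofib} to say that a trivial fibration has kernel in $\Ee\cap\Ac$, but as stated that lemma presupposes an abelian model structure, which is precisely what you are constructing. This is not a genuine gap---the relevant content is just the long exact homology sequence applied to an epimorphism that is a quasi-isomorphism, so the kernel is acyclic---but you should phrase it that way rather than citing the lemma. A second remark: your sketch of the factorisation axiom is correct in spirit but quite compressed; the usual construction for, say, a cofibration--trivial-fibration factorisation of $f\colon A\to B$ is to use enough projectives of $(\Dd,\Ee\cap\Ac)$ to write $0\to E\to D\to B\to 0$ with $D\in\Dd$, $E\in\Ee\cap\Ac$, and then take $A\to A\oplus D\to B$, checking that the first map is a monomorphism with cokernel $D\in\Dd$ and the second an epimorphism with kernel computable (via the snake lemma) as an extension of $E$ by $A$ pulled back appropriately---or more cleanly, one factors $A\oplus D\to B$ and observes its kernel lies in $\Ee\cap\Ac$. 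Spelling this out would strengthen the proposal.
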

	
	In light of the above theorem we want to start with a cotorsion pair $(\Dd,\Ee)$ on an abelian category $\Aa$ and produce two complete cotorsion pairs on $\Ch(\Aa)$ in order to induce a model structure. We begin with some definitions.
	
	\begin{defi}[{\cite[Definition 2.2]{Gillespie06}}]\label{defi:types-of-complexes}
		Let $(\Dd,\Ee)$ be a cotorsion pair on an abelian category $\Aa$. Let $A^\bu$ be a complex. We write $Z^nA^\bu=\ker(d^n:A^n\to A^{n+1})$ as usual.
		\begin{enumerate}
			\item 
			$A^\bu$ is called a \emph{$\Dd$-complex} if it is exact and $Z^nA^\bu\in \Dd$ for all $n$. We write $\tilde{\Dd}$ for the class of $\Dd$-complexes.
			\item 
			$A^\bu$ is called an \emph{$\Ee$-complex} if it is exact and $Z^nA^\bu\in \Ee$ for all $n$. We write $\tilde{\Ee}$ for the class of $\Ee$-complexes.
			\item 
			$A^\bu$ is called a \emph{$\dg\Dd$-complex} if $A^n\in \Dd$ for all $n$, and every map $f^\bu:A^\bu\to E^\bu$ to a $\Ee$-complex $E^\bu$ is homotopic to zero. We write $\dg\tilde{\Dd}$ for the class of $\dg\Dd$-complexes.
			\item 
			$A^\bu$ is called a \emph{$\dg\Ee$-complex} if $A^n\in \Ee$ for all $n$, and every map $f^\bu:D^\bu\to A^\bu$ from a $\Dd$-complex $D^\bu$ is homotopic to zero. We write $\dg\tilde{\Ee}$ for the class of $\dg\Ee$-complexes.
		\end{enumerate}
	\end{defi}
	
	\begin{emp}
		It is well-known that an exact complex of the form
		\[
			P^\bu=\ldots\to P^{-1}\to P^0\to 0\to 0\to \ldots
		\]
		where each $P^i$ is projective, is a projective object in the category of complexes.
		
		In particular resolutions of this type can be used to compute derived functors. However, an unbounded exact complex of projective objects does not need to be a projective object in general as the following example shows:
		
		Let $R=\Z/4\Z$ and 
		\[
			P^\bu=\begin{tikzcd}
			\ldots\ar[r] & R\ar[r,"\cdot 2"]& R\ar[r,"\cdot 2"]& R\ar[r] & \ldots
			\end{tikzcd}
		\]
		It is easy to check that the identity is not homotopic to zero, i.e.\@ $P^\bu$ is not contractible. However, it can be shown that projective objects in $\Ch(\Aa)$ are precisely the contractible complexes of projectives. 
		
		The above definition remedies this. Assume $\Aa$ has enough injectives and let $\Pp$ denote the class of projective objects in $\Aa$. Then $(\Pp,\Ob(\Aa))$ is a hereditary cotorsion pair and, as we will see, by \cref{lem:hereditary-for-compatible} a $\Pp$-complex is precisely an exact $\dg\Pp$-complex. Therefore $\Pp$-complexes are precisely the projective objects in $\Ch(\Aa)$. To see this, note that $\Ob(\Aa)$-complexes are just the exact complexes, hence for a $\Pp$-complex $P^\bu$, the identity is null homotopic.
	\end{emp}
	
	\begin{emp}\label{emp:note-on-quillen-adjunctions}
		In abelian model categories we can give the following simple reformulation of the definition of Quillen adjunctions.
		Let $F:\Ch(\Aa)\to \Ch(\Bb)$ and $G:\Ch(\Bb)\to \Ch(\Aa)$ be additive functors between abelian model categories and let $F$ be left adjoint to $G$. We use notation as in \cref{thm:cot-pairs-and-model-cats}.
		Then $F$ is left Quillen if and only if the following three conditions hold:
		\begin{enumerate}
			\item If $0\to A^\bu\to B^\bu\to D^\bu\to 0$ is a short exact sequence and $D^\bu \in \Ac\cap \Dd$, then the sequence $0\to F(A^\bu)\to F(B^\bu) \to F(D^\bu)\to 0$ is exact.
			\item If $D^\bu\in \Ac\cap \Dd$, then $F(D^\bu)\in \Ac\cap \Dd$.
			\item If $D^\bu\in \Dd$ then $F(D^\bu)\in \Dd$.
		\end{enumerate}
	\end{emp}

	\begin{lemma}[{\cite[Lemma 3.4]{Gillespie04}}]\label{lem:bounded-are-dg}
		Let $(\Dd,\Ee)$ be a cotorsion pair. 
		\begin{itemize}
			\item Complexes bounded in the direction of the arrows with entries in $\Dd$ are $\dg\Dd$-complexes.
			\item Complexes bounded against the direction of the arrows with entries in $\Ee$ are $\dg\Ee$-complexes.
		\end{itemize}
	\end{lemma}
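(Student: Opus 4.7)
The plan is to prove the first assertion directly by constructing a null-homotopy degree by degree, moving downward through the complex; the second assertion then follows by the formally dual argument (equivalently, by passing to $\Aa^{op}$, where the cotorsion pair $(\Dd,\Ee)$ becomes $(\Ee,\Dd)$ and a complex bounded against the arrows becomes one bounded along them).

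So let $D^\bu$ satisfy $D^n\in\Dd$ for all $n$ and $D^n=0$ for $n\geq N$, let $E^\bu$ be an $\Ee$-complex, and let $f^\bu\colon D^\bu\to E^\bu$ be an arbitrary chain map. I aim to build maps $s^n\colon D^n\to E^{n-1}$ satisfying
\[f^n=d^{n-1}_E s^n+s^{n+1}d^n_D\]
by downward induction on $n$, starting from $s^n=0$ for $n\geq N$ (where both sides vanish automatically since $D^n=0$).

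For the inductive step, suppose $s^{n+1},s^{n+2},\ldots$ have already been chosen so that the homotopy identity holds in all degrees $\geq n+1$. Set $g^n:=f^n-s^{n+1}d^n_D\colon D^n\to E^n$. Combining the chain-map identity $d^n_E f^n=f^{n+1}d^n_D$ with the inductive equation $f^{n+1}=d^n_E s^{n+1}+s^{n+2}d^{n+1}_D$ and $d^{n+1}_D d^n_D=0$, one checks that $d^n_E g^n=0$, so $g^n$ factors through $Z^n E^\bu$. Because $E^\bu$ is an $\Ee$-complex, the short exact sequence
\[0\to Z^{n-1}E^\bu\to E^{n-1}\to Z^n E^\bu\to 0,\]
which is exact by acyclicity of $E^\bu$, has left-hand term in $\Ee$; together with $D^n\in\Dd$ this gives $\Ext^1(D^n,Z^{n-1}E^\bu)=0$, so the long exact $\Ext$-sequence lifts the map $D^n\to Z^n E^\bu$ induced by $g^n$ to $s^n\colon D^n\to E^{n-1}$ with $d^{n-1}_E s^n=g^n$, which is the required homotopy identity in degree $n$. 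The only substantive input, and hence the only obstacle, is precisely this $\Ext^1$-lifting, which is pure cotorsion-pair data; the rest is bookkeeping, and because the induction starts from a definite top no convergence argument is needed.
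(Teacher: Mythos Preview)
Your argument is correct and is exactly the standard construction of a null-homotopy by downward induction using the $\Ext^1$-vanishing from the cotorsion pair; the paper does not supply its own proof but merely cites \cite[Lemma~3.4]{Gillespie04}, where the same argument appears. Nothing needs to be changed.
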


	\noindent The following part explains why different properties of a cotorsion pair on $\Aa$ are needed in order to induce compatible cotorsion pairs on $\Ch(\Aa)$. 
	
	\begin{prop}[{\cite[Lemma 3.5 and Proposition 3.6]{Gillespie04}}]\label{prop:enough-tilde-objects}
		Let $(\Dd,\Ee)$ be a cotorsion pair on $\Aa$. If every object of $\Aa$ is a quotient of an object of $\Dd$ and a subobject of an object of $\Ee$ then 
		\[
		(\dg \tilde{\Dd},\tilde{\Ee}) \text{ and } (\tilde{\Dd},\dg\tilde{\Ee})
		\]
		are cotorsion pairs on $\Ch(\Aa)$ and every object of $\Ch(\Aa)$ is a quotient of an object of $\tilde{\Dd}$ and a subobject of an object of $\tilde{\Ee}$.
	\end{prop}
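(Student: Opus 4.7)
The plan is to exploit the sphere-disk adjunction between $\Aa$ and $\Ch(\Aa)$ in order to transfer Ext-computations in the chain category to Ext-computations in $\Aa$. Write $S^n(A)$ for the complex concentrated in degree $n$ with value $A$, and $D^n(A)$ for the contractible complex with $A \to A$ (identity) in degrees $n-1$ and $n$. One then has $\Hom_{\Ch(\Aa)}(D^n(A), X^\bu) \cong \Hom_\Aa(A, X^{n-1})$, an analogous identification of $\Ext^1_{\Ch(\Aa)}(D^n(A), X^\bu)$ with $\Ext^1_\Aa(A, X^{n-1})$, and a formula relating $\Ext^1_{\Ch(\Aa)}(S^n(A), X^\bu)$ to pointwise extensions that see the cycles of $X^\bu$. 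By \cref{lem:bounded-are-dg}, each disk $D^n(A)$ lies in $\dg\tilde{\Dd}$ when $A \in \Dd$ and in $\dg\tilde{\Ee}$ when $A \in \Ee$.

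I would first construct the "enough" epimorphisms and monomorphisms, since they feed directly into the cotorsion pair proofs. Given $X^\bu \in \Ch(\Aa)$, pick for each $n$ an epimorphism $P^n \twoheadrightarrow X^n$ with $P^n \in \Dd$, and assemble $\tilde{D}^\bu := \bigoplus_n D^n(P^n)$ mapping to $X^\bu$ via the projections $P^n \to X^n$ in degree $n$ and via $P^n \to X^n \to X^{n-1}$ in degree $n-1$. The complex $\tilde{D}^\bu$ is exact (a direct sum of contractible disks), its entries are in $\Dd$, and a direct calculation shows its cycles $Z^k\tilde{D}^\bu \cong P^k$ again lie in $\Dd$; the map is a degree-wise epimorphism by construction. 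Dually, one embeds $X^\bu$ into $\bigoplus_n D^n(E^n)$ for monos $X^n \hookrightarrow E^n$ with $E^n \in \Ee$, producing the required monomorphism into $\tilde{\Ee}$.

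Second, I would verify that $(\tilde{\Dd},\dg\tilde{\Ee})$ and $(\dg\tilde{\Dd},\tilde{\Ee})$ are cotorsion pairs. The inclusion $\dg\tilde{\Ee} \subseteq \tilde{\Dd}^\perp$ goes as follows: for $D^\bu \in \tilde{\Dd}$ and $E^\bu \in \dg\tilde{\Ee}$, any extension $0 \to E^\bu \to F^\bu \to D^\bu \to 0$ splits degree-wise because $\Ext^1_\Aa(D^n,E^n)=0$ (using that $D^n$ is an extension of the $\Dd$-cycles $Z^nD^\bu$ and $Z^{n+1}D^\bu$), and the definition of $\dg\tilde{\Ee}$ upgrades the degree-wise splitting to a chain-level splitting. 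The reverse inclusion uses the disk test: evaluating the assumed $\Ext^1$-vanishing on $D^n(E)$ with $E\in \Ee$ pins down the entries of $E^\bu$ in $\Ee$, while the homotopy vanishing of the $\dg$-condition is precisely the content of vanishing against $\tilde{\Dd}$-complexes. The description of $(\dg\tilde{\Dd},\tilde{\Ee})$ is dual, and the remaining "generation" inclusions follow by applying the orthogonality to the short exact sequence produced in the previous paragraph and using that $\tilde{\Dd}$ and $\tilde{\Ee}$ are closed under retracts.

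The main obstacle is the orthogonality computation in the third paragraph: one must carefully track how a degree-wise split extension $0 \to E^\bu \to F^\bu \to D^\bu \to 0$ with $D^\bu \in \tilde{\Dd}$ and $E^\bu \in \dg\tilde{\Ee}$ yields a chain-level splitting, and this is precisely the homotopy-theoretic content for which the $\dg$-variants of $\tilde{\Dd}$ and $\tilde{\Ee}$ were introduced. Once this bookkeeping is done, the "enough" construction via direct sums of disks is elementary and the converse inclusions fall out by retract arguments applied to the constructed sequences.
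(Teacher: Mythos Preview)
The paper does not supply its own proof here; it simply cites \cite[Lemma 3.5 and Proposition 3.6]{Gillespie04}. Your outline follows Gillespie's strategy closely, and the construction of the epimorphism from $\tilde{\Dd}$ via $\bigoplus_n D^n(P^n)$ (and the dual embedding into $\tilde{\Ee}$) is correct, modulo an indexing slip: in the paper's cohomological convention $D^n(A)$ sits in degrees $n$ and $n+1$, so the induced map in the second degree is $P^n \to X^n \xrightarrow{d} X^{n+1}$ rather than through $X^{n-1}$.

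The genuine gap is in your final clause. After establishing $\tilde{\Dd}^\perp = \dg\tilde{\Ee}$ and its dual ${}^\perp\tilde{\Ee}=\dg\tilde{\Dd}$, you still owe ${}^\perp(\dg\tilde{\Ee}) \subseteq \tilde{\Dd}$ and $(\dg\tilde{\Dd})^\perp \subseteq \tilde{\Ee}$, and the retract argument you sketch does not deliver them. Given $X^\bu \in {}^\perp(\dg\tilde{\Ee})$ and the disk surjection $0 \to K^\bu \to \bigoplus_n D^n(P^n) \to X^\bu \to 0$, a splitting would require $K^\bu \in \dg\tilde{\Ee}$, which nothing in sight guarantees; the dual embedding $0\to X^\bu\to \tilde{E}^\bu\to C^\bu\to 0$ with $\tilde{E}^\bu\in\tilde{\Ee}$ splits only if the extension class in $\Ext^1(C^\bu,X^\bu)$ vanishes, which is again unrelated to your hypothesis on $X^\bu$. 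Closure of $\tilde{\Dd}$ under retracts only helps once a splitting is already in hand.

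What actually closes this direction is the sphere test you mention in your opening paragraph but never deploy. Since $S^m(E) \in \dg\tilde{\Ee}$ for every $E \in \Ee$ (\cref{lem:bounded-are-dg}), the hypothesis $X^\bu\in{}^\perp(\dg\tilde{\Ee})$ gives $\Ext^1_{\Ch(\Aa)}(X^\bu, S^m(E)) = 0$ for all $m$ and all $E\in\Ee$. From the sphere--disk sequence $0 \to S^{m+1}E \to D^m E \to S^m E \to 0$ one deduces that $X^\bu$ is exact, and then the identification $\Ext^1_\Aa(Z^{m+1}X^\bu, E) \cong \Ext^1_{\Ch(\Aa)}(X^\bu, S^m E)$ for exact $X^\bu$ forces the cycles into $\Dd$. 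This is exactly the computation the paper later records as (the nontrivial direction of) \cref{lem:tilde-criterion}; that direction does not invoke the present proposition, so no circularity arises. Your proposal lists all the right ingredients up front; the fix is simply to apply the sphere test here in place of the retract shortcut.
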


	\begin{lemma}[{\cite[Lemma 3.10]{Gillespie04}}] 
		For every cotorsion pair $(\Dd,\Ee)$ it is $\tilde{\Dd}\subset\dg\tilde{\Dd}\cap \Ac$ and $\tilde{\Ee}\subset\dg\tilde{\Ee}\cap \Ac$.
	\end{lemma}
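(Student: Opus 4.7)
The plan is to handle the two inclusions separately; since $(\Ee,\Dd)$ is a cotorsion pair on $\Aa^{op}$ and the definitions of $\tilde{\Ee}$ and $\dg\tilde{\Ee}$ are manifestly dual to those of $\tilde{\Dd}$ and $\dg\tilde{\Dd}$, the inclusion $\tilde{\Ee}\subset \dg\tilde{\Ee}\cap \Ac$ is formally dual to $\tilde{\Dd}\subset \dg\tilde{\Dd}\cap \Ac$, so I only sketch the latter. Fix $A^\bu\in \tilde{\Dd}$. Acyclicity is part of the definition of $\tilde{\Dd}$. Each entry $A^n$ lies in $\Dd$ by applying the extension-closure lemma for $\Dd$ to the short exact sequence
\[
	0\to Z^nA^\bu\to A^n\to Z^{n+1}A^\bu\to 0,
\]
which is short exact because $A^\bu$ is acyclic. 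The same argument, run for any $\Ee$-complex $E^\bu$, shows $E^n\in \Ee$ for all $n$; this will be used below.

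The substantive content is that any chain map $f^\bu:A^\bu\to E^\bu$ into an $\Ee$-complex is null-homotopic. My plan is to construct a homotopy $s^n:A^n\to E^{n-1}$ by induction outward from degree $0$. In the base step I first lift the induced map on cycles $\bar{f}^0:Z^0A^\bu\to Z^0E^\bu$ along the surjection $E^{-1}\twoheadrightarrow Z^0E^\bu$ (whose kernel $Z^{-1}E^\bu$ lies in $\Ee$) using $\Ext^1(Z^0A^\bu,Z^{-1}E^\bu)=0$, and then extend the resulting map $Z^0A^\bu\to E^{-1}$ along the inclusion $Z^0A^\bu\hookrightarrow A^0$ (with cokernel $Z^1A^\bu\in \Dd$) using $\Ext^1(Z^1A^\bu,E^{-1})=0$, which yields $s^0$. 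For the forward step, $f^n-d_Es^n$ vanishes on $Z^nA^\bu$ and hence factors through a unique map $t^{n+1}:Z^{n+1}A^\bu\to E^n$; extending $t^{n+1}$ to $s^{n+1}:A^{n+1}\to E^n$ via $\Ext^1(Z^{n+2}A^\bu,E^n)=0$ automatically enforces the homotopy identity at degree $n$. For the backward step one checks that $f^{n-1}-s^nd_A$ lands in $Z^{n-1}E^\bu=\im d_E$ and lifts it through $E^{n-2}\twoheadrightarrow Z^{n-1}E^\bu$ via $\Ext^1(A^{n-1},Z^{n-2}E^\bu)=0$ to obtain $s^{n-1}$.

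The main obstacle is purely bookkeeping: verifying at each inductive step that the map one wishes to lift or extend really lands in the right subobject, that every $\Ext^1$-obstruction is of the admissible form $\Ext^1(D,E)=0$ with $D\in \Dd$ and $E\in \Ee$, and that the base case $s^0$ is compatible with the homotopy relation in both directions. Beyond the fact that $(\Dd,\Ee)$ is a cotorsion pair, no further hypothesis on it is required for the argument to go through.
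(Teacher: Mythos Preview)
The paper does not prove this lemma itself; it merely cites \cite[Lemma 3.10]{Gillespie04}. Your argument is correct and is the standard one: acyclicity and $A^n\in\Dd$ are immediate from the definitions and extension-closure, and the null-homotopy is built inductively by alternately lifting along $E^{n-1}\twoheadrightarrow Z^nE^\bu$ (obstruction $\Ext^1(\text{something in }\Dd,\,Z^{n-1}E^\bu)=0$) and extending along $Z^nA^\bu\hookrightarrow A^n$ (obstruction $\Ext^1(Z^{n+1}A^\bu,\,\text{something in }\Ee)=0$), with the single base-step condition $(f^0-d_E^{-1}s^0)|_{Z^0A^\bu}=0$ serving to launch both the forward and backward inductions.
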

	In order to induce a model structure we need $\tilde{\Dd}=\dg\tilde{\Dd}\cap \Ac$ and $\tilde{\Ee}=\dg\tilde{\Ee}\cap \Ac$, in which case we will call the pairs \emph{compatible}. This is where we will need $(\Dd,\Ee)$ to be hereditary.

	\begin{lemma}[{\cite[Theorem 3.12]{Gillespie04}}]\label{lem:hereditary-for-compatible} 
		Let $(\Dd,\Ee)$ be a hereditary cotorsion pair in $\Aa$. 
		\begin{enumerate}
			\item
			If $\Aa$ has enough projectives, then $\tilde{\Ee}=\dg\tilde{\Ee}\cap \Ac$. 
			\item
			If $\Aa$ has enough injectives, then $\tilde{\Dd}=\dg\tilde{\Dd}\cap \Ac$.
		\end{enumerate}
	\end{lemma}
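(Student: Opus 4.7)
The plan is to prove (1) in detail; statement (2) follows from the dual construction, using an injective coresolution of $E\in\Ee$ (available from enough injectives) in place of the projective resolution and chain maps $A^\bu\to\mathbf{E}$ in place of $\mathbf{D}\to E^\bu$. The previous lemma gives the inclusion $\tilde{\Ee}\subset\dg\tilde{\Ee}\cap\Ac$; for the reverse, fix $E^\bu\in\dg\tilde{\Ee}\cap\Ac$ and $n\in\Z$, and aim to show $Z^nE^\bu\in\Ee$, which amounts to proving $\Ext^1(D,Z^nE^\bu)=0$ for every $D\in\Dd$. Apply $\Hom(D,-)$ to $0\to Z^nE^\bu\to E^n\to Z^{n+1}E^\bu\to 0$: since $E^n\in\Ee$ and the cotorsion pair is hereditary, $\Ext^1(D,E^n)=0$, so $\Ext^1(D,Z^nE^\bu)\cong\Hom(D,Z^{n+1}E^\bu)/\Hom(D,E^n)$. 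It therefore suffices to show that every $f:D\to Z^{n+1}E^\bu$ lifts along the surjection $E^n\twoheadrightarrow Z^{n+1}E^\bu$ (surjective by exactness of $E^\bu$).

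To produce such a lift I would construct an auxiliary $\Dd$-complex $\mathbf{D}$ together with a chain map $\mathbf{D}\to E^\bu$ whose top component is $f$. Using enough projectives, take a projective resolution $\ldots\to P^{-2}\to P^{-1}\to P^0\to D\to 0$. Since $(\Dd,\Ee)$ is hereditary, $\Dd$ is resolving by \cref{lem:ext2=0-implies-co-resolving}; combined with each $P^{-i}\in\Dd$ (every projective lies in $\Dd$), an induction on the short exact sequences $0\to K_{i+1}\to P^{-i}\to K_i\to 0$ of syzygies (with $K_0=D$) shows that every $K_i$ lies in $\Dd$. Now let $\mathbf{D}^{n+1}=D$, $\mathbf{D}^{n-i}=P^{-i}$ for $i\geq 0$, and $\mathbf{D}^k=0$ for $k\geq n+2$, with differentials taken from the augmented resolution. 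Then $\mathbf{D}$ is exact, its entries lie in $\Dd$, and its cycles are either $0$, $D$, or a syzygy $K_i$, hence all in $\Dd$; so $\mathbf{D}\in\tilde{\Dd}$.

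Set $\phi^{n+1}:D\to E^{n+1}$ to be $f$ followed by the inclusion $Z^{n+1}E^\bu\hookrightarrow E^{n+1}$, and extend $\phi$ downward inductively: at each stage $\phi^{k+1}\circ d_\mathbf{D}^k:\mathbf{D}^k\to E^{k+1}$ has image in $Z^{k+1}E^\bu$ (since $d_E^{k+1}\circ\phi^{k+1}\circ d_\mathbf{D}^k=\phi^{k+2}\circ d_\mathbf{D}^{k+1}\circ d_\mathbf{D}^k=0$), and projectivity of $\mathbf{D}^k=P^{k-n-1}$ lets us lift it along $E^k\twoheadrightarrow Z^{k+1}E^\bu$ to obtain $\phi^k$. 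Because $\mathbf{D}\in\tilde{\Dd}$ and $E^\bu\in\dg\tilde{\Ee}$, the chain map $\phi$ is null-homotopic, say $\phi=d_Eh+hd_\mathbf{D}$. Since $d_\mathbf{D}^{n+1}=0$ (because $\mathbf{D}^{n+2}=0$), the homotopy identity in degree $n+1$ reduces to $\phi^{n+1}=d_E^n\circ h^{n+1}$ with $h^{n+1}:D\to E^n$, exhibiting $f$ as the composition $D\xrightarrow{h^{n+1}}E^n\twoheadrightarrow Z^{n+1}E^\bu$. The main obstacle is to upgrade $\mathbf{D}$ from a $\dg\tilde{\Dd}$-complex to an honest $\tilde{\Dd}$-complex: this is precisely what the hereditary hypothesis buys, via the resolving property that closes the syzygies up inside $\Dd$, and it is what makes the strong null-homotopy condition defining $\dg\tilde{\Ee}$ applicable in the last step.
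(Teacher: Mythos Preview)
The paper does not give its own proof of this lemma; it simply records the statement and cites \cite[Theorem 3.12]{Gillespie04}. Your argument is correct and is essentially the standard one found in Gillespie's paper: build a bounded-above $\tilde{\Dd}$-complex $\mathbf{D}$ from a projective resolution of $D$, use that $\Dd$ is resolving (a consequence of hereditary) to force the syzygies into $\Dd$, map $\mathbf{D}$ into $E^\bu$ using projectivity of the $P^{-i}$, and extract the desired lift from a null-homotopy guaranteed by the definition of $\dg\tilde{\Ee}$. The dual you sketch for (2) also goes through verbatim.

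Two minor cosmetic points, neither affecting correctness. First, the invocation of ``hereditary'' in the sentence ``since $E^n\in\Ee$ and the cotorsion pair is hereditary, $\Ext^1(D,E^n)=0$'' is superfluous: $\Ext^1(D,E^n)=0$ already follows from $D\in\Dd$ and $E^n\in\Ee$ by the definition of a cotorsion pair. The place where hereditary is genuinely used is exactly where you flag it later, namely to get the syzygies $K_i$ into $\Dd$ via \cref{lem:ext2=0-implies-co-resolving}. Second, there is an indexing slip: from $\mathbf{D}^{n-i}=P^{-i}$ one gets $\mathbf{D}^k=P^{k-n}$ for $k\leq n$, not $P^{k-n-1}$.
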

	
	\begin{lemma}[{\cite[Lemma 3.14]{Gillespie04}}]\label{lem:compatible-criterion} 
		Let $(\Dd,\Ee)$ be a cotorsion pair in $\Aa$ (not necessarily hereditary). 
		\begin{enumerate}
			\item
			If $(\dg\tilde{\Dd},\tilde{\Ee})$ is a cotorsion pair with enough injectives and $\dg\tilde{\Dd}\cap \Ac=\tilde{\Dd}$, then $\dg\tilde{\Ee}\cap \Ac= \tilde{\Ee}$.
			\item
			If $(\tilde{\Dd},\dg\tilde{\Ee})$ is a cotorsion pair with enough projectives and $\dg\tilde{\Ee}\cap \Ac=\tilde{\Ee}$, then $\dg\tilde{\Dd}\cap \Ac=\tilde{\Dd}$.
		\end{enumerate}
	\end{lemma}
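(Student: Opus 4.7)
The plan is to prove statement (1); statement (2) follows by a dual argument (apply (1) to $\Aa^{op}$ with the cotorsion pair $(\Ee,\Dd)$). By the preceding lemma we already have $\tilde{\Ee}\subset\dg\tilde{\Ee}\cap\Ac$, so only the reverse inclusion requires work. Given $E^\bu\in\dg\tilde{\Ee}\cap\Ac$, I would first use that $(\dg\tilde{\Dd},\tilde{\Ee})$ is a cotorsion pair with enough injectives to produce a short exact sequence
\[
	0\to E^\bu\to \tilde{E}^\bu\to D^\bu\to 0
\]
with $\tilde{E}^\bu\in\tilde{\Ee}$ and $D^\bu\in\dg\tilde{\Dd}$. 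Since $E^\bu$ and $\tilde{E}^\bu\in\tilde{\Ee}\subset\Ac$ are acyclic, the long exact sequence in homology forces $D^\bu$ to be acyclic. The hypothesis $\dg\tilde{\Dd}\cap\Ac=\tilde{\Dd}$ then upgrades this to $D^\bu\in\tilde{\Dd}$.

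The heart of the argument is to show that this short exact sequence splits. Once that is done, $E^\bu$ is a direct summand of $\tilde{E}^\bu\in\tilde{\Ee}$, and closure of $\tilde{\Ee}$ under direct summands (it is the right class of a cotorsion pair and hence cut out by a vanishing-$\Ext^1$ condition) yields $E^\bu\in\tilde{\Ee}$. To split the sequence, I would first split it degree by degree: $D^n\in\Dd$ (since $D^\bu\in\tilde{\Dd}$ is exact with cycles in $\Dd$ and $\Dd$ is extension-closed) and $E^n\in\Ee$ (by the $\dg\tilde{\Ee}$-hypothesis), so the original cotorsion pair $(\Dd,\Ee)$ in $\Aa$ gives $\Ext^1_\Aa(D^n,E^n)=0$ and hence splittings $s^n\colon D^n\to\tilde{E}^n$ in every degree. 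The obstruction to $(s^n)_n$ being a chain map is the family
\[
	\sigma^n \;=\; d^n_{\tilde{E}}\circ s^n - s^{n+1}\circ d^n_D,
\]
which a quick diagram chase shows factors through the kernel $E^{n+1}\hookrightarrow\tilde{E}^{n+1}$ and assembles to a chain map $\sigma\colon D^\bu\to E^\bu[1]$. Standard obstruction theory (replace each $s^n$ by $s^n+h^n$ with $h^n\colon D^n\to E^n$) shows the extension splits precisely when $\sigma$ is null-homotopic.

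This is where the $\dg\tilde{\Ee}$-hypothesis on $E^\bu$ enters: $D^\bu\in\tilde{\Dd}$ is in particular a $\Dd$-complex, and $E^\bu[1]$ still lies in $\dg\tilde{\Ee}$ (all four classes $\tilde{\Dd},\tilde{\Ee},\dg\tilde{\Dd},\dg\tilde{\Ee}$ are manifestly stable under the shift), so the very definition of $\dg\tilde{\Ee}$ forces $\sigma$ to be null-homotopic. I expect the main obstacle to be nothing more than a careful bookkeeping of this obstruction-theoretic step, in particular checking that $\sigma$ really is a chain map and that the correction terms $h^n$ recovering a chain-level splitting match a null-homotopy of $\sigma$. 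The remaining ingredients—the long exact sequence in homology, extension- and summand-closure of cotorsion classes, and shift-stability of the four classes—are routine.
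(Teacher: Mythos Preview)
The paper does not supply its own proof of this lemma; it simply cites \cite[Lemma 3.14]{Gillespie04}. Your argument is correct and is essentially the one Gillespie gives: embed $E^\bu$ into some $\tilde{E}^\bu\in\tilde{\Ee}$ using enough injectives, observe the cokernel lands in $\tilde{\Dd}$ via the hypothesis, then split the resulting extension. The only difference is packaging: Gillespie has already recorded (his Lemma~3.1) the identification $\Ext^1_{dw}(X^\bu,Y^\bu)\cong\Hom_{K(\Aa)}(X^\bu,Y^\bu[1])$ for degreewise-split extensions, so once he notes that the sequence splits in each degree (from $\Ext^1_\Aa(D^n,E^n)=0$) the vanishing follows immediately from the definition of $\dg\tilde{\Ee}$. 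Your explicit obstruction computation with $\sigma^n=d^n_{\tilde E}s^n-s^{n+1}d^n_D$ is precisely an unpacking of that identification, so the two arguments coincide.
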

	\begin{prop}[{\cite[Lemma 3.6]{Gillespie06}}]\label{prop:enough-inj-crit}
		Let $(\Dd,\Ee)$ be a cotorsion pair in a Grothendieck abelian category $\Aa$ where each object is a quotient of an object of $\Dd$. If $(\Dd,\Ee)$ is cogenerated by a set, then the cotorsion pair of complexes $(\dg\tilde{\Dd},\tilde{\Ee})$ has enough injectives.
	\end{prop}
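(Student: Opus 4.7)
The plan is to deduce the claim from the Eklof--Trlifaj small-object argument on the Grothendieck abelian category $\Ch(\Aa)$: any cotorsion pair on a Grothendieck abelian category that is cogenerated on the left by a set of objects has enough injectives, with cokernels that are transfinite extensions of members of the cogenerating set. By \cref{prop:enough-tilde-objects} the pair $(\dg\tilde{\Dd},\tilde{\Ee})$ is already known to be a cotorsion pair on $\Ch(\Aa)$, so the whole problem reduces to producing a set $\Ss'\subset \dg\tilde{\Dd}$ with $(\Ss')^{\perp}=\tilde{\Ee}$.

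To build $\Ss'$, I would start with the given cogenerating set $\Ss\subset \Dd$ of $(\Dd,\Ee)$ and a generator $U$ of $\Aa$, and use the hypothesis that every object of $\Aa$ is a quotient of an object of $\Dd$ to lift the Grothendieck-generator data into $\dg\tilde{\Dd}$. Concretely, for each $S\in\Ss$ and each $n\in\Z$ I would include the sphere complex $S^n(S)$ (meaning $S$ placed in degree $n$ and zero elsewhere) to take care of the condition that the cycle objects $Z^n E^\bu$ lie in $\Ee$, and would add a further family of bounded complexes with entries in $\Dd$---built by choosing, for each member of a small generating family in $\Aa$, a surjection from an object of $\Dd$---whose role is to detect exactness of $E^\bu$. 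Each element of $\Ss'$ is bounded with entries in $\Dd$ and so lies in $\dg\tilde{\Dd}$ by \cref{lem:bounded-are-dg}, making the inclusion $\tilde{\Ee}\subset (\Ss')^{\perp}$ automatic.

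The main step is the reverse inclusion $(\Ss')^{\perp}\subset\tilde{\Ee}$. For $E^\bu\in(\Ss')^{\perp}$, the first task is to deduce that $E^\bu$ is exact from the vanishing of $\Ext^1_{\Ch(\Aa)}$ against the auxiliary exactness-detecting complexes, using the standard adjunction between sphere/disk-like constructions and the components/cycles of a complex together with the fact that $U$ is a generator. Once exactness is secured, the short exact sequence $0\to S^{n-1}(S)\to D^n(S)\to S^n(S)\to 0$ (where $D^n(S)$ is the two-term acyclic complex with $S$ in degrees $n-1,n$ and identity differential) and its long exact $\Ext$-sequence collapse, using exactness of $E^\bu$, to identify $\Ext^1_{\Ch(\Aa)}(S^n(S),E^\bu)$ with $\Ext^1_{\Aa}(S,Z^n E^\bu)$, so $Z^n E^\bu\in \Ee$ for all $n$ and we conclude $E^\bu\in\tilde{\Ee}$. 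The principal obstacle will be the set-theoretic construction of the exactness-detecting objects inside $\dg\tilde{\Dd}$ and the verification that their $\Ext$-orthogonality really is equivalent to acyclicity of $E^\bu$; this is exactly where the hypothesis that every object of $\Aa$ is a quotient of an object of $\Dd$ becomes decisive, since without it one has no way of realizing the generator-based acyclicity criterion inside the subcategory of complexes whose entries lie in $\Dd$.
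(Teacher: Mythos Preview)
The paper does not supply its own proof here; it cites \cite[Lemma 3.6]{Gillespie06} and only adds the remark immediately following the statement that the hypothesis ``each object is a quotient of an object of $\Dd$'' is used merely to ensure that $(\dg\tilde{\Dd},\tilde{\Ee})$ is a cotorsion pair (via \cref{prop:enough-tilde-objects}). So there is no in-paper argument to compare against beyond that remark.

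Your strategy---exhibit a set $\Ss'$ with $(\Ss')^\perp=\tilde{\Ee}$ and invoke the Eklof--Trlifaj small-object argument in the Grothendieck category $\Ch(\Aa)$---is the standard route and is what the cited reference does. One simplification is worth making: you do not need a separate, vaguely specified ``family of bounded complexes with entries in $\Dd$'' to detect exactness. Pick a generator $U$ of $\Aa$ and surject some $D\in\Dd$ onto it (quotient hypothesis); then $D$ is itself a generator, and after adjoining $D$ to $\Ss$ the single set $\Ss'=\{S^n(S):S\in\Ss,\ n\in\Z\}$ already does both jobs. Indeed, from the sequence $0\to S^{n+1}(D)\to D^n(D)\to S^n(D)\to 0$ (note the paper's convention in \cref{defi:sphere-and-disc}: $D^n$ sits in degrees $n$ and $n+1$, not $n-1$ and $n$ as you wrote) one sees that $\Ext^1_{\Ch(\Aa)}(S^n(D),E^\bu)=0$ forces $\Hom_\Aa(D,E^n)\to\Hom_\Aa(D,Z^{n+1}E^\bu)$ to be surjective, hence $E^\bu$ is exact because $D$ generates; once $E^\bu$ is exact, the identification $\Ext^1_{\Ch(\Aa)}(S^n(S),E^\bu)\cong\Ext^1_\Aa(S,Z^nE^\bu)$ yields $Z^nE^\bu\in\Ee$. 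So the ``principal obstacle'' you flag is not really an obstacle, and in light of the paper's remark you should not overstate the role of the quotient hypothesis at this step: once it hands you a generator in $\Dd$, the rest is routine.
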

	\begin{rem}
		The previous proposition only uses the assumption that every object of $\Aa$ is a quotient of an object in $\Dd$ to ensure that $(\dg\tilde{\Dd},\tilde{\Ee})$ is a cotorsion pair.
	\end{rem}

	\begin{thm}[{Gillespie \cite[Corollaries 3.7 and 3.8]{Gillespie06}}]\label{thm:assumptions-give-model-cat}
		Let $\Aa$ be a Grothendieck abelian category with a hereditary cotorsion pair $(\Dd,\Ee)$, which is cogenerated by a set. Assume that every object of $\Aa$ is a quotient of an object of $\Dd$.
		Then
		\begin{enumerate}
			\item $\dg\tilde{\Dd}\cap \Ac=\tilde{\Dd}$ and $\dg\tilde{\Ee}\cap \Ac=\tilde{\Ee}$, and
			\item $(\dg\tilde{\Dd},\tilde{\Ee})$ is complete.
		\end{enumerate}
		If additionally $(\tilde{\Dd},\dg\tilde{\Ee})$ has enough injectives, then $(\tilde{\Dd},\dg\tilde{\Ee})$ also has enough projectives.
		Hence in that case we have an induced abelian model category structure on $\Ch(\Aa)$ where the weak equivalences are quasi-isomorphisms, the cofibrant objects are $\dg\tilde{\Dd}$, and the fibrant objects are $\dg\tilde{\Ee}$.
	\end{thm}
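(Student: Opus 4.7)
The plan is to assemble this result from the prior lemmas in the section, treating the two cotorsion pairs $(\dg\tilde{\Dd},\tilde{\Ee})$ and $(\tilde{\Dd},\dg\tilde{\Ee})$ in turn, and then invoking Hovey's \cref{thm:cot-pairs-and-model-cats} at the end.

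First I would set up the two pairs as cotorsion pairs. The hypothesis only supplies that every object is a quotient of an object in $\Dd$, but since $\Aa$ is Grothendieck it has enough injectives, and every injective lies in $\Ee$ (as is noted after \cref{defi:cot-pair}). Hence every object of $\Aa$ is also a subobject of an object in $\Ee$, and the full hypothesis of \cref{prop:enough-tilde-objects} is met. This gives both cotorsion pairs on $\Ch(\Aa)$ and tells us that every complex is a quotient of an object of $\tilde{\Dd}$ and a subobject of an object of $\tilde{\Ee}$.

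Next I would prove part (1). Since $(\Dd,\Ee)$ is hereditary and $\Aa$ has enough injectives, \cref{lem:hereditary-for-compatible}(2) directly yields $\tilde{\Dd}=\dg\tilde{\Dd}\cap \Ac$. For the other equality I would turn to \cref{lem:compatible-criterion}(1): this requires knowing that $(\dg\tilde{\Dd},\tilde{\Ee})$ has enough injectives, which is precisely the content of \cref{prop:enough-inj-crit} under the Grothendieck plus cogenerated-by-a-set assumptions. Applying the lemma then gives $\tilde{\Ee}=\dg\tilde{\Ee}\cap \Ac$.

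For part (2) I would use \cref{prop:enough-of-one-implies-complete}: we already have enough injectives in $(\dg\tilde{\Dd},\tilde{\Ee})$, and every complex is a quotient of an object of $\tilde{\Dd}\subset \dg\tilde{\Dd}$ by the first step, so this pair also has enough projectives and is complete. The same argument handles the final clause: if $(\tilde{\Dd},\dg\tilde{\Ee})$ has enough injectives by additional assumption, then since every complex is a quotient of an object of $\tilde{\Dd}$, \cref{prop:enough-of-one-implies-complete} again supplies enough projectives. Combining the two complete compatible cotorsion pairs with \cref{thm:cot-pairs-and-model-cats} produces the desired abelian model structure with $\dg\tilde{\Dd}$ cofibrant, $\dg\tilde{\Ee}$ fibrant, and quasi-isomorphisms as weak equivalences (the latter because, by \cref{lem:trivial-cofib} and the identifications in part (1), the acyclic cofibrant/fibrant classes agree with the definitions built into the abelian model structure).

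The only real subtlety, and thus the main obstacle, is bookkeeping: verifying at each invocation that the right mix of hereditary, \enquote{enough} and \enquote{cogenerated by a set} hypotheses is in hand, and in particular noticing that the missing \enquote{subobject of an object of $\Ee$} input to \cref{prop:enough-tilde-objects} comes for free from the Grothendieck assumption. Once that bridge is built, each step is a direct citation.
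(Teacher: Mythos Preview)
Your proposal is correct. The paper does not actually write out a proof of this theorem (it is attributed to Gillespie), but the preceding lemmas are arranged precisely so that the result can be assembled exactly as you describe, and each citation you make is the intended one. The only superfluous step is your final parenthetical about \cref{lem:trivial-cofib}: once the two compatible complete cotorsion pairs are in hand, \cref{thm:cot-pairs-and-model-cats} already delivers an abelian model structure, and weak equivalences being quasi-isomorphisms is part of that definition.
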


	\begin{rem}
		Note that in the previous theorem only the condition that $(\tilde{\Dd},\dg\tilde{\Ee})$ has enough injectives is a condition of chain complexes. All the other conditions are checked on $\Aa$.
	\end{rem}

	\begin{emp}
		\cref{thm:assumptions-give-model-cat} gives conditions on a cotorsion pair on $\Aa$ in order to induce a model structure on $\Ch(\Aa)$. It is natural to ask which of these conditions are implied by the result, i.e.\@ when we assume that $(\Dd,\Ee)$ induces a model structure.
	\end{emp}
	\begin{thm}\label{thm:assumptions}
		Let $\Aa$ be a Grothendieck abelian category with a cotorsion pair $(\Dd,\Ee)$. Assume that $(\dg\tilde{\Dd},\tilde{\Ee})$ and $(\tilde{\Dd},\dg\tilde{\Ee})$ are compatible complete cotorsion pairs. Then
		\begin{enumerate}
			\item $(\Dd,\Ee)$ is complete;
			\item every object of $\Aa$ is a quotient of an object in $\Dd$;
			\item $(\Dd,\Ee)$ is hereditary
		\end{enumerate}
	\end{thm}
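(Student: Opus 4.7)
For (1) and (2), the plan is to transfer completeness from the cotorsion pairs on $\Ch(\Aa)$ down to $(\Dd,\Ee)$ on $\Aa$ by taking the degree-$0$ slot of the resolution of a sphere complex. Given $X\in \Aa$, apply the enough-injectives of $(\tilde{\Dd},\dg\tilde{\Ee})$ to the complex $S^0(X)$ with $X$ concentrated in degree $0$, obtaining a short exact sequence of complexes
\[
0\to S^0(X)\to Y^\bu\to Z^\bu\to 0
\]
with $Y^\bu\in \dg\tilde{\Ee}$ and $Z^\bu\in \tilde{\Dd}$. In degree $0$ this restricts to a short exact sequence $0\to X\to Y^0\to Z^0\to 0$ in $\Aa$ with $Y^0\in \Ee$ by the very definition of $\dg\tilde{\Ee}$, and $Z^0\in \Dd$ because a $\tilde{\Dd}$-complex has all cycles in $\Dd$ and $\Dd$ is closed under extensions, forcing all entries to lie in $\Dd$ too. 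Dualising, enough projectives of $(\dg\tilde{\Dd},\tilde{\Ee})$ applied to $S^0(X)$ yields enough projectives of $(\Dd,\Ee)$ in $\Aa$, establishing (1); and (2) is an immediate consequence of enough projectives.

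For (3) I plan to prove the stronger assertion $\Ext^n_\Aa(D,E)=0$ for all $n\geq 1$, $D\in \Dd$, $E\in \Ee$, by reading it off the induced model structure. By \cref{thm:cot-pairs-and-model-cats}, the hypotheses produce an abelian model structure on $\Ch(\Aa)$ whose weak equivalences are the quasi-isomorphisms, so its homotopy category is the usual derived category $D(\Aa)$. In a Grothendieck abelian category there is the standard identification $\Ext^n_\Aa(D,E)\cong \Hom_{D(\Aa)}(S^0(D),S^n(E))$, so it suffices to show this right-hand side vanishes. For that I will check that $S^0(D)\in \dg\tilde{\Dd}$, i.e.\ $S^0(D)$ is cofibrant, and dually that $S^n(E)\in \dg\tilde{\Ee}$ for every $n\in \Z$. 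Both are the same type of direct calculation: a chain map $f\colon S^0(D)\to F^\bu$ with $F^\bu\in \tilde{\Ee}$ corresponds to a map $D\to Z^0 F^\bu\in \Ee$, and the obstruction to lifting this along $F^{-1}\twoheadrightarrow Z^0 F^\bu$ lies in $\Ext^1_\Aa(D,Z^{-1}F^\bu)$, which vanishes because $D\in \Dd$ and $Z^{-1}F^\bu\in \Ee$; any such lift is the required null-homotopy. The fibrancy of $S^n(E)$ is symmetric, using the orthogonality $\Ext^1_\Aa(Z^{n+2}G^\bu,E)=0$ for $G^\bu\in \tilde{\Dd}$.

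Once $S^0(D)$ is cofibrant and $S^n(E)$ is fibrant, $\Hom_{D(\Aa)}(S^0(D),S^n(E))$ is computed as $\Hom_{\Ch(\Aa)}(S^0(D),S^n(E))$ modulo homotopy, and the latter is already zero for $n\neq 0$ since the two complexes are supported in disjoint degrees. Hence $\Ext^n_\Aa(D,E)=0$ for all $n\geq 1$, which is the hereditary condition. I expect the main obstacle to be the conceptual leap of computing $\Ext_\Aa$ via the homotopy category of the induced model structure, i.e.\ invoking the equivalence $\mathrm{Ho}(\Ch(\Aa))\simeq D(\Aa)$ together with the cofibrant-fibrant description of $\Hom$-sets; every remaining step is either a direct manipulation of complexes or a single use of the defining orthogonality of $(\Dd,\Ee)$.
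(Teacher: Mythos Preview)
Your proof is correct. Parts (1) and (2) are essentially the paper's argument (you use the other of the two induced cotorsion pairs, which costs you the extra remark that entries of a $\tilde{\Dd}$-complex lie in $\Dd$; the paper uses $(\dg\tilde{\Dd},\tilde{\Ee})$ so that both degree-$0$ entries lie in $\Dd$ resp.\ $\Ee$ by definition).

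For (3) you take a genuinely different route. The paper argues \emph{elementarily}: given an epimorphism $D^0\twoheadrightarrow D^1$ in $\Dd$, it extends this to a bounded-above exact complex with entries in $\Dd$, invokes \cref{lem:bounded-are-dg} to see it is a $\dg\Dd$-complex, and then uses compatibility $\dg\tilde{\Dd}\cap\Ac=\tilde{\Dd}$ to conclude the kernel lies in $\Dd$; hereditariness then follows from \cref{coro:true-hereditary-criterion}. Your argument instead passes through the induced model structure and the identification $\mathrm{Ho}(\Ch(\Aa))\simeq D(\Aa)$, reading off $\Ext^n_\Aa(D,E)=\Hom_{D(\Aa)}(S^0D,S^nE)$ as a quotient of $\Hom_{\Ch(\Aa)}(S^0D,S^nE)=0$ once you know $S^0D$ is cofibrant and $S^nE$ is fibrant. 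This is conceptually clean and yields all $n$ at once, but it imports more machinery (the derived-category description of $\Ext$ and the cofibrant--fibrant computation of homotopy classes), whereas the paper's argument stays entirely inside the cotorsion-pair formalism and uses only the compatibility hypothesis directly. Note also that your verification that $S^0D\in\dg\tilde{\Dd}$ and $S^nE\in\dg\tilde{\Ee}$ is precisely \cref{lem:bounded-are-dg}, which the paper already has available.
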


	\begin{proof}
		Let $A$ be any object of $\Aa$. Denote by $S^0A$ the complex with $A$ in degree $0$ and zero everywhere else. There is a short exact sequence
		\[
		0\to S^0A\to \underbrace{E^\bu}_{\in \tilde{\Ee}}\to \underbrace{D^\bu}_{\in \dg\tilde{\Dd}} \to 0.
		\]
		Since $\tilde{\Ee}\subset \dg\tilde{\Ee}$ it is $E^0\in \Ee$ and we find
		\[
		0\to A\to \underbrace{E^0}_{\in {\Ee}}\to \underbrace{D^0}_{\in {\Dd}} \to 0
		\]
		which shows that $(\Dd,\Ee)$ has enough injectives. A similar argument shows that it also has enough projectives.
		The second statement is implied by the first \textit{a fortiori}.
		For the third statement it is enough to show that $\Dd$ is resolving by
		\cref{coro:true-hereditary-criterion}.

		Let $d^0:D^0\to D^1$ be an epimorphism with kernel $K$. We then find an epimorphism $D^{-1}\to K$ and we denote the composition $D^{-1}\to K\to D^0$ by $d^{-1}$. By continuing in this way we can construct an exact complex
		\[
		D^\bu=\ldots \to D^{-1}\to D^0\to D^1\to 0\to \ldots
		\]
		such that $D^i\in \Dd$ for all $i$ and $D^i=0$ for $i>1$. By \cref{lem:bounded-are-dg} $D^\bu$ is a $\dg{\Dd}$-complex and therefore by compatibility $D^\bu\in \tilde{\Dd}$. In particular $Z^0D^\bu=\ker d^0\in \Dd$.
	\end{proof}

\section[The 1-variable case]{The $1$-variable case}\label{sec:1-var-case}

	This section gives a criterion for an adjunction between abelian categories to induce a Quillen adjunction on their chain complexes. It will then serve as the base case for a prove by induction of the $n$-variable case.
	
	\begin{emp}\label{emp:induces-model-structure}
		We have seen that, under certain conditions, a cotorsion pair $(\Dd,\Ee)$ on an abelian category will induce a model structure on $\Ch(\Aa)$. From now on we will simply say that \emph{$(\Dd,\Ee)$ induces a model structure} if $(\dg\tilde{\Dd},\tilde{\Ee})$ and $(\tilde{\Dd},\dg\tilde{\Ee})$ are compatible complete cotorsion pairs. 
		In particular this will allow us to use the properties implied by \cref{thm:assumptions} for $(\Dd,\Ee)$ itself.
	\end{emp}

	The following definition is motivated by the observation \cref{emp:note-on-quillen-adjunctions}.

	\begin{defi}\label{defi:right-left-split}
		Let $F:\Aa\to\Bb$ be a right exact functor and $\Ss\subset\Ob(\Aa)$ a class of objects. We call it \emph{$F$-right split} if for all short exact sequences
		\[
			0\to X\to Y\to \underbrace{S}_{\in\Ss}\to 0
		\]
		the sequence
		\[
			0\to FX\to FY\to FS\to 0
		\]
		is also exact. \\
		Similarly if $G:\Bb\to \Aa$ is left exact we call $\Ss\subset\Ob(\Bb)$ a \emph{$G$-left split} class of objects if 
		\[
			0\to S\to X\to Y\to 0
		\]
		with $S$ in $\Ss$ implies the exactness of
		\[
			0\to FS\to FX\to FY\to 0.
		\]
		This is equivalent to saying that a monomorphism with cokernel in $\Ss$ gets mapped to a monomorphism by $F$ (and epimorphisms with kernel in $\Ss$ getting mapped to epimorphisms by $G$).
	\end{defi}
	\begin{example}
		Let $(X,\Oo)$ be a ringed space and $\Aa=\lmod{\Oo}$ the category of $\Oo$-modules. 
		\begin{enumerate}
			\item The class of flat modules is $(A\otimes -)$-right split for any $\Oo$-module $A$.
			\item The class of flabby modules is $\Gamma(X,-)$-left split.
		\end{enumerate}
	\end{example}
	Let now $F:\Aa\rightleftarrows \Bb:G$ be an adjunction $F\dashv G$ of Grothendieck abelian categories with cotorsion pairs $(\Dd,\Ee)$ on $\Aa$ and $(\Dd',\Ee')$ on $\Bb$.

	We will often use the following easy fact, and restate it here for the readers convenience.
	\begin{lemma}\label{lem:mono-epi-crit}
		Let $\Cc$ be a category and $f:X\to Y$ be a morphism in $\Cc$. Then
		\begin{enumerate}
			\item if there exists an epimorphism $p:Y'\to Y$ and a morphism $g:Y'\to X$ with $fg=p$, then $f$ is an epimorphism.
			\item if there exists a monomorphism $m:X\to X'$ and a morphism $g:Y\to X'$ with $gf=m$ then $f$ is a monomorphism.
		\end{enumerate}
	\end{lemma}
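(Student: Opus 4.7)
The statement is a very standard categorical diagram chase, so the plan is short: unfold the definitions of epimorphism and monomorphism and use the given factorizations to transfer the cancellation property from $p$ (resp.\ $m$) to $f$.

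For part (1), I start with an arbitrary test pair $h_1,h_2:Y\to Z$ satisfying $h_1 f=h_2 f$, and I want to conclude $h_1=h_2$. The trick is to postcompose this equation with $g$ on the right: this yields $h_1(fg)=h_2(fg)$, which by the hypothesis $fg=p$ becomes $h_1 p=h_2 p$. Since $p$ is assumed to be an epimorphism, cancellation gives $h_1=h_2$, which is exactly what is needed for $f$ to be an epimorphism.

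Part (2) is formally dual: starting from $h_1,h_2:Z\to X$ with $fh_1=fh_2$, I precompose with $g$ on the left to obtain $(gf)h_1=(gf)h_2$, which by $gf=m$ reads $mh_1=mh_2$, and then cancellation against the monomorphism $m$ yields $h_1=h_2$.

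There is no real obstacle here; the only thing to be careful about is the direction of composition and not to confuse oneself with the arrows, since the "witness" morphism $g$ goes in opposite directions in the two parts ($g:Y'\to X$ in (1), $g:Y\to X'$ in (2)). No properties of $\Cc$ beyond being a category are required, and no further lemmas from the paper are invoked.
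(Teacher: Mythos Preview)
Your proof is correct and follows essentially the same approach as the paper: both argue by taking a test pair of morphisms, composing with $g$ to reduce to the cancellation property of $p$ (resp.\ $m$), and then invoking duality for the second part. The only difference is cosmetic---the paper proves (1) and declares (2) dual, while you write out both.
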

		\begin{proof}
			We only show the first statement, as the other one is dual.
			Let $a,b:A\rightrightarrows X$ be two morphisms with $af=bf$. Then of course it also is $afg=bfg$. By assumption $fg=p$ is an epimorphism, hence from $ap=bp$ we can conclude $a=b$, as desired.
		\end{proof}
	\begin{prop}\label{prop:equivalent-conditions-1-variable}
		Let $\Aa$ and $\Bb$ be abelian categories and $F:\Aa\to \Bb$ be a left adjoint with right adjoint $G$. Also let $(\Dd,\Ee)$ and $(\Dd',\Ee')$ be a cotorsion pair on $\Aa$ and $\Bb$, respectively. Assume that every object of $\Aa$ is a subobject of an object in $\Ee$, and that every object of $\Bb$ is a quotient of an object in $\Dd'$.\\
		The following pairs of conditions are equivalent
		
		\vspace{ 1 em}
		\begin{minipage}[l]{0.5\textwidth}
			\begin{enumerate}
				\item[$(1a)$] $\Dd$ is $F$-right split
				\item[$(1b)$] $F(\Dd)\subset \Dd'$
			\end{enumerate}
		\end{minipage}
		\begin{minipage}[r]{0.45\textwidth} 
			\begin{enumerate}
				\item[$(2a)$] $\Ee'$ is $G$-left split
				\item[$(2b)$] $G(\Ee')\subset \Ee$
			\end{enumerate}
		\end{minipage}
		\vspace{ 0.15 em }
	\end{prop}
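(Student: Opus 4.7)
The plan is to prove $(1a) \Leftrightarrow (1b)$ directly; the equivalence $(2a) \Leftrightarrow (2b)$ then follows by applying the same statement to the opposite adjunction $G^{op} \dashv F^{op}$ between $\Bb^{op}$ and $\Aa^{op}$ equipped with the cotorsion pairs $(\Ee',\Dd')$ and $(\Ee,\Dd)$. Under this passage to opposites, the hypothesis that every object of $\Aa$ is a subobject of an object in $\Ee$ corresponds exactly to the required hypothesis on $\Aa^{op}$, and similarly for $\Bb$; the conditions $(1a)$ and $(1b)$ for the opposite setup then translate precisely to $(2a)$ and $(2b)$ for the original.

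For $(1b) \Rightarrow (1a)$: fix a short exact sequence $0 \to X \to Y \to D \to 0$ in $\Aa$ with $D \in \Dd$. Right-exactness of $F$ yields exactness of $F(X) \to F(Y) \to F(D) \to 0$, so it suffices to show $F(X) \to F(Y)$ is monic. Using the $\Aa$-hypothesis, pick a monomorphism $\iota \colon X \hookrightarrow E$ with $E \in \Ee$ and form the pushout $P := E \sqcup_X Y$. The induced sequence $0 \to E \to P \to D \to 0$ splits because $\Ext^1(D,E) = 0$, hence $P \cong E \oplus D$. Since $F$ preserves colimits, applying it produces in $\Bb$ a commutative pushout square whose bottom edge is the summand inclusion $F(E) \hookrightarrow F(E) \oplus F(D)$. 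Post-composing with the projection onto the first summand gives $F(X) \to F(Y) \to F(E) \oplus F(D) \to F(E) = F(\iota)$, so $\ker(F(X) \to F(Y)) \subseteq \ker(F(\iota))$. To rule out nonzero kernels, use $(1b)$: since $F(D) \in \Dd'$, one has $\Ext^1_\Bb(F(D), E') = 0$ for every $E' \in \Ee'$; combining this with the adjunction $\Hom_\Bb(F(-),-) = \Hom_\Aa(-,G(-))$ and with the $\Bb$-hypothesis, applied to a $\Dd'$-cover of the would-be kernel, forces the kernel to vanish.

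For $(1a) \Rightarrow (1b)$: fix $D \in \Dd$ and $E' \in \Ee'$, and consider an extension $0 \to E' \to B \to F(D) \to 0$ in $\Bb$. Applying $G$ yields the left-exact sequence $0 \to G(E') \to G(B) \to G(F(D))$. Form the pullback $P := D \times_{G(F(D))} G(B)$ along the unit $\eta_D \colon D \to G(F(D))$. Covering $B$ by some $D' \in \Dd'$ (using the $\Bb$-hypothesis) and transferring the resulting epimorphy through the adjunction via the counit identities, one verifies that $P \to D$ is an epimorphism; this produces a short exact sequence $0 \to G(E') \to P \to D \to 0$ in $\Aa$. Apply $(1a)$ to this sequence to obtain a short exact sequence $0 \to F(G(E')) \to F(P) \to F(D) \to 0$ in $\Bb$, then push out along the counit $\epsilon_{E'} \colon F(G(E')) \to E'$. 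The triangle identities identify the resulting sequence with the original $B$-extension and supply an explicit splitting, so $\Ext^1_\Bb(F(D), E') = 0$.

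The main obstacle is the closing step of $(1b) \Rightarrow (1a)$: to deduce $\ker(F(X) \to F(Y)) = 0$ from the inclusion $\ker(F(X)\to F(Y)) \subseteq \ker(F(\iota))$ requires exploiting $F(D)\in\Dd'$, the adjunction, and the $\Dd'$-cover structure on $\Bb$ together -- the cotorsion-pair axioms and the pushout square alone are not visibly enough. The analogous technical crux on the other side is the verification that the pullback projection $P \to D$ is epi, which likewise rests on using both category-level hypotheses simultaneously.
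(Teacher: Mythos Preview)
You have misread the statement. The proposition asserts that the \emph{conjunction} $(1a)\wedge(1b)$ is equivalent to the \emph{conjunction} $(2a)\wedge(2b)$; it does \emph{not} claim that $(1a)\Leftrightarrow(1b)$. In fact the implication $(1b)\Rightarrow(1a)$ that you attempt is false in general. Take $\Aa=\Bb=\mathrm{Ab}$ with the cotorsion pair $(\Dd,\Ee)=(\Dd',\Ee')=(\mathrm{Ab},\text{injectives})$ on both sides, and let $F=-\otimes\mathbb{Z}/2$ with right adjoint $G=\Hom(\mathbb{Z}/2,-)$. The hypotheses are satisfied (enough injectives, and every object is trivially a quotient of an object in $\Dd'=\mathrm{Ab}$). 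Condition $(1b)$ reads $F(\mathrm{Ab})\subset\mathrm{Ab}$, which is vacuous, while $(1a)$ asserts that $F$ is exact, which fails on $0\to\mathbb{Z}\xrightarrow{2}\mathbb{Z}\to\mathbb{Z}/2\to 0$. This is exactly why your closing step in $(1b)\Rightarrow(1a)$ could not be completed: you had reduced to showing that $F(\iota)$ is monic for an arbitrary monomorphism $\iota\colon X\hookrightarrow E$ with $E\in\Ee$, and there is simply no reason for that to hold. (Symmetrically, $(1a)\Rightarrow(1b)$ also fails: take $F=\id$ with $(\Dd,\Ee)=(\mathrm{Ab},\text{inj})$ and $(\Dd',\Ee')=(\text{proj},\mathrm{Ab})$.)

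The paper's proof proceeds quite differently: it first shows $(1b)\Rightarrow(2a)$ by a direct lifting argument (cover $GZ$ by some $D\in\Dd$, pass through the adjunction, and use $\Ext^1_\Bb(FD,E)=0$); then it uses $(1a)$ together with the just-established $(2a)$ to produce a natural isomorphism $\Ext^1_\Bb(FD,E)\cong\Ext^1_\Aa(D,GE)$ for $D\in\Dd$, $E\in\Ee'$; finally $(2b)$ drops out of this isomorphism and $(1b)$. The reverse implication is then genuinely dual. Note that both $(1a)$ and $(1b)$ are used to reach $(2b)$, consistent with the counterexamples above.
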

	\begin{rem}
		This proposition can be viewed as a generalization of the well-known fact that the right adjoint of an exact functor preserves injectives, by applying it to the $(\Aa,\text{Injectives})$ cotorsion pair.
	\end{rem}
		\begin{proof}
			We first claim that $(1b)$ implies $(2a)$.\\
			Let $0\to E\to Y\to Z\to 0$ with $E\in \Ee'$. We have to show that applying $G$ preserves exactness. Because $\Aa$ has enough $\Dd$-objects, we find an object $D\in \Dd$ and an epimorphism $D\to GZ$, as indicated in the following diagram
			\[
				\begin{tikzcd}
					&&&D\ar[d,twoheadrightarrow]\ar[ld, dashed, "{\exists?}",swap]\\
					0\ar[r] & GE\ar[r] & GY \ar[r] & GZ.
				\end{tikzcd}
			\]
			When there is a lift $D\to GY$ of $D\to GZ$ along $GY\to~ GZ$, then it follows that the morphism $GY\to GZ$ is an epimorphism as well. The existence of a lift is itself equivalent to the surjectivity of $\Hom_\Aa(D,GY)\to~ \Hom_\Aa(D,GZ)$. We use the adjointness of $F$ and $G$ and get
			\[
				\begin{tikzcd}
					\Hom_\Aa(D,GY)\ar[r]\ar[d,"\cong"] & \Hom_\Aa(D,GZ)\ar[d,"\cong"]\\
					\Hom_\Bb(FD,Y)\ar[r] & \Hom_\Bb(FD,Z)\ar[r] & \Ext^1_\Bb(FD,E).
				\end{tikzcd}
			\]
			By assumption $FD\in\Dd'$ and $E\in \Ee'$, hence $\Ext^1(FD,E)=0$. This shows exactness of 
			\[
				0\to GE\to GY\to GZ\to 0.
			\]
			We now claim that $(1a)$ and $(2a)$ together imply
			\begin{equation}\label{eq:ext-adjunction}
				\Ext^1_\Bb(FD,E)\cong \Ext^1_\Aa(D,GE) \quad \forall D\in \Dd,\text{ }E\in \Ee'.
			\end{equation}
			To see this we fix an element of $\Ext^1_\Bb(FD,E)$
			\[
				0\to E\to Y\to FD\to 0.
			\]
			We apply the functor $G$ to this short exact sequence, and use the unit morphism $D\to GFD$ to construct
			\[
				\begin{tikzcd}
					0\ar[r] & GE\ar[d,equal]\ar[r] & GY\times_{GFD} D\ar[r]\ar[d] & D\ar[r]\ar[d] & 0\\
					0 \ar[r]& GE \ar[r] & GY\ar[r] & GFD\ar[r] & 0
				\end{tikzcd}
			\]
			which is an element in $\Ext^1_\Aa(D,GE)$. By naturality this induces a morphism 
			\[
				\Ext^1_\Bb(FD,E)\to \Ext^1_\Aa(D,GE).
			\]

			Similarly we find a morphism in the other direction. It is easy to see that these two maps are mutually inverse.
			
			Finally, we prove that $(1a)$, $(1b)$ and $(2a)$ imply $(2b)$. Let $E\in \Ee'$, then we have to show that $\Ext^1_\Aa(D,GE)=0$ for all $D\in \Dd$. By \cref{eq:ext-adjunction} this is the same as showing $\Ext^1_\Bb(FD,E)=0$ for all $D\in \Dd'$ which is true by $(1b)$.
			
			The other direction, i.e.\@ showing that $(2a)$ and $(2b)$ imply $(1a)$ and $(1b)$ is the dual argument.
		\end{proof}

	\begin{coro}\label{coro:exactness-of-products}
		Let $\Aa$ be an abelian category with a cotorsion pair $(\Dd,\Ee)$ such that every object is a quotient of an object in $\Dd$ and a subobject of an object in $\Ee$. Let $I$ be a set.
		\begin{enumerate}
			\item 
				For an $I$-indexed family of short exact sequences
				\[
					0\to A_i\to B_i\to \underbrace{D_i}_{\in \Dd}\to 0
				\]
				the direct sum
				\[
					0\to \bigoplus_I A_i\to \bigoplus_I B_i\to \bigoplus_I D_i\to 0
				\]
				is exact, and $\bigoplus_I D_i\in \Dd$.
			\item 
				For an $I$-indexed family of short exact sequences
				\[
					0\to \underbrace{E_i}_{\in \Ee}\to A_i\to B_i\to  0
				\]
				the product
				\[
					0\to \prod_I E_i\to \prod_I A_i\to \prod_I B_i\to 0
				\]
				is exact and $\prod_I E_i\in \Ee$.
		\end{enumerate}
	\end{coro}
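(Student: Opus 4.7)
The plan is to reduce both parts to \cref{prop:equivalent-conditions-1-variable} by applying it to the diagonal adjunctions
\[
	\bigoplus_I : \Aa^I \rightleftarrows \Aa : \Delta
	\qquad\text{and}\qquad
	\Delta : \Aa \rightleftarrows \Aa^I : \prod_I.
\]
On the product category $\Aa^I$ I would use the componentwise cotorsion pair $(\Dd^I,\Ee^I)$, where $(X_i)\in\Dd^I$ (resp.\ $\Ee^I$) means each component lies in $\Dd$ (resp.\ $\Ee$). Since $\Ext^1_{\Aa^I}((X_i),(Y_i))\cong \prod_i \Ext^1_\Aa(X_i,Y_i)$ (an extension in $\Aa^I$ is built componentwise), this is genuinely a cotorsion pair, and the assumption on $(\Dd,\Ee)$ propagates componentwise: every family in $\Aa^I$ is both a quotient of a family in $\Dd^I$ and a subobject of a family in $\Ee^I$. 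A short exact sequence in $\Aa^I$ is, tautologically, an $I$-indexed family of short exact sequences in $\Aa$, so translating the output of the proposition back to $\Aa$ produces exactly the statements of the corollary.

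For part (1) I take $F=\bigoplus_I$ and $G=\Delta$. The hypotheses of the proposition are met as above, and condition $(2b)$, namely $G(\Ee)\subset\Ee^I$, is immediate since the constant family $\Delta E=(E,E,\ldots)$ lies in $\Ee^I$ whenever $E\in\Ee$. The proposition then yields $(1a)$ and $(1b)$, which read exactly as $\bigoplus_I D_i\in\Dd$ whenever all $D_i\in\Dd$, and $\bigoplus_I$ preserves $I$-indexed families of short exact sequences whose cokernels lie in $\Dd$.

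For part (2) I instead take $F=\Delta$ and $G=\prod_I$, so the left adjoint is the diagonal and the right adjoint is the product. Now the roles of the two enough-objects hypotheses swap, but they again hold by the corollary's assumptions and componentwise bookkeeping. Condition $(1b)$ is trivial since $\Delta(\Dd)\subset\Dd^I$. The proposition then supplies $(2a)$ and $(2b)$, which translate directly into the assertions that $\prod_I E_i\in\Ee$ and that $\prod_I$ preserves $I$-indexed families of short exact sequences whose kernels lie in $\Ee$.

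The only step that really needs verification is the claim that $(\Dd^I,\Ee^I)$ is a cotorsion pair on $\Aa^I$ inheriting the enough-quotients/enough-subobjects properties from $(\Dd,\Ee)$; this is routine componentwise bookkeeping, so I foresee no serious obstacle.
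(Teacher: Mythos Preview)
Your approach is essentially identical to the paper's: both use the componentwise cotorsion pair $(\Dd^I,\Ee^I)$ on $\Aa^I$, the adjunctions $\bigoplus_I\dashv\Delta\dashv\prod_I$, and then invoke \cref{prop:equivalent-conditions-1-variable}. One small point: the proposition trades the \emph{pair} $(1a)\wedge(1b)$ for the \emph{pair} $(2a)\wedge(2b)$, so for part~(1) you need not just $(2b)$ but also $(2a)$, and dually $(1a)$ for part~(2); the paper handles this by remarking that $\Delta$ is exact, which makes the splitness conditions $(2a)$ and $(1a)$ vacuous. You should make that observation explicit, but otherwise the argument is complete and matches the paper.
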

		\begin{proof}
			Denote by $\Aa^I$ the product category. It is clear that it has a cotorsion pair $(\Dd^I,\Ee^I)$ where the classes consist of tuples point-wise in $\Dd$ or $\Ee$ respectively. There is the constant diagram functor $c:\Aa\to \Aa^I$ and clearly $c(\Dd)\in \Dd^I$ and $c(\Ee)\in \Ee^I$. Since $c$ is exact splitness conditions with respect to $c$ are empty. By definition we have adjunctions
			\[
				\bigoplus_I \dashv c\dashv \prod_I
			\]
			and the result now follows from \cref{prop:equivalent-conditions-1-variable}.
		\end{proof}
	\begin{rem}
		It is well-known that a category with enough projective objects has exact products. The above Corollary is a generalization of that fact. Indeed for any abelian category $\Aa$ there is the cotorsion pair $(\text{projectives},\Ob(\Aa))$, and if $\Aa$ has enough projectives the assumption of the Corollary are met.
	\end{rem}

	\begin{thm}\label{thm:criterion-for-quillen-adjunction}
		Let $F:\Aa\rightleftarrows \Bb:G$ be an adjunction $F\dashv G$. Let $(\Dd,\Ee)$ and $(\Dd',\Ee')$ be cotorsion pairs on $\Aa$ and $\Bb$ respectively. Assume that the cotorion pairs induce model structures on $\Ch(\Aa)$ and $\Ch(\Bb)$, respectively. Additionally assume the equivalent conditions
		
		\vspace{ 1 em}
		\begin{minipage}[l]{0.46\textwidth}
			\begin{enumerate}
				\item[$(1a)$] $\Dd$ is $F$ right split
				\item[$(1b)$] $F(\Dd)\subset \Dd'$
			\end{enumerate}
		\end{minipage}
		\begin{minipage}[r]{0.46\textwidth}
			\begin{enumerate}
				\item[$(2a)$] $\Ee'$ is $G$ left split
				\item[$(2b)$] $G(\Ee')\subset \Ee$
			\end{enumerate}
		\end{minipage}
		\vspace{ 0.6 em }
		
		\noindent Then $F$ and $G$ induce a Quillen adjunction $\Ch(F):\Ch(\Aa)\rightleftarrows\Ch(\Bb):\Ch(G)$.
	\end{thm}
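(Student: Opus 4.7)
The plan is to verify the three conditions of \cref{emp:note-on-quillen-adjunctions}, where in our setting the cofibrant class is $\dg\tilde{\Dd}$ and the class of acyclic cofibrants is $\tilde{\Dd}=\dg\tilde{\Dd}\cap\Ac$ (and similarly over $\Bb$, by compatibility). One preliminary remark is needed: for every $D^\bu\in \tilde{\Dd}$ each $D^n$ lies in $\Dd$, because the sequence $0\to Z^nD^\bu\to D^n\to Z^{n+1}D^\bu\to 0$ is short exact with both outer terms in $\Dd$ and $\Dd$ is closed under extensions.

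The first two conditions are quick consequences of $(1a)$ and $(1b)$. For the preservation of short exact sequences $0\to A^\bu\to B^\bu\to D^\bu\to 0$ with $D^\bu\in\tilde{\Dd}$, I would apply $(1a)$ degreewise, using the preliminary remark to certify $D^n\in\Dd$. For $\Ch(F)(\tilde{\Dd})\subset \tilde{\Dd'}$, given $D^\bu\in\tilde{\Dd}$ I would apply $(1a)$ to each short exact sequence $0\to Z^nD^\bu\to D^n\to Z^{n+1}D^\bu\to 0$ (whose right-hand term lies in $\Dd$); splicing the resulting exact sequences $0\to F(Z^nD^\bu)\to F(D^n)\to F(Z^{n+1}D^\bu)\to 0$ simultaneously identifies $\Ch(F)D^\bu$ as acyclic and gives $Z^n\Ch(F)D^\bu\cong F(Z^nD^\bu)$, which lies in $\Dd'$ by $(1b)$.

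It remains to show $\Ch(F)(\dg\tilde{\Dd})\subset \dg\tilde{\Dd'}$. For $D^\bu\in\dg\tilde{\Dd}$, condition $(1b)$ gives $F(D^n)\in\Dd'$ in each degree; what has to be checked is that any chain map $\Ch(F)D^\bu\to E^\bu$ with $E^\bu\in\tilde{\Ee'}$ is null-homotopic. By the exact dual of the previous paragraph---now applied to $(2a)$ and $(2b)$---the complex $\Ch(G)E^\bu$ lies in $\tilde{\Ee}$. The additive adjunction $\Ch(F)\dashv\Ch(G)$ respects chain homotopies, so the adjunct map $D^\bu\to\Ch(G)E^\bu$ is null-homotopic by definition of $\dg\tilde{\Dd}$, and this null-homotopy transports back to the original map. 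I expect this transport of homotopies to be the only non-formal point: it relies on the unit and counit being chain maps and on additivity of $\Ch(F)$ and $\Ch(G)$. The rest of the proof is bookkeeping of degreewise short exact sequences in $\Aa$ and $\Bb$, made possible by the key fact from \cref{prop:equivalent-conditions-1-variable} that $(1a)$--$(2b)$ are all equivalent, so that both the ``left'' conditions $(1a)$, $(1b)$ and the ``right'' conditions $(2a)$, $(2b)$ are available simultaneously.
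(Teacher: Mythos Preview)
Your argument is correct, but it does more work than the paper's. The paper proves only that $\Ch(F)$ preserves \emph{trivial} cofibrations---exactly your conditions 1 and 2, with the same degreewise use of $(1a)$ and the same splicing of the cycle sequences (the paper phrases the latter via the five lemma)---and then remarks that the dual argument, using $(2a)$ and $(2b)$, shows $\Ch(G)$ preserves trivial fibrations. Since an adjunction is Quillen once the left adjoint preserves trivial cofibrations and the right adjoint preserves trivial fibrations, this already finishes the proof without ever touching $\dg\tilde{\Dd}$-complexes. Your third step is nonetheless valid (the transport of null-homotopies works because $\Ch(F)$ is additive and preserves cones of identity maps, so the adjunction bijection restricts to null-homotopic maps), but notice that its main ingredient, $\Ch(G)(\tilde{\Ee'})\subset\tilde{\Ee}$, together with the degreewise application of $(2a)$, already yields ``$\Ch(G)$ preserves trivial fibrations''---so you could have stopped there and skipped the homotopy argument entirely. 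One caution about your framing: the reformulation in \cref{emp:note-on-quillen-adjunctions}, read literally, does not include the requirement that $\Ch(F)$ send monomorphisms with \emph{arbitrary} cofibrant cokernel to monomorphisms, which is part of ``$\Ch(F)$ preserves cofibrations''; this is harmless in the present setting because your degreewise argument from $(1a)$ applies verbatim to $\dg\tilde{\Dd}$-complexes (their entries lie in $\Dd$ by definition), but it is worth being aware of.
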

		We will use the notions introduced in \sref{Definition}{defi:types-of-complexes}.
		\begin{proof}
			We show that $\Ch(F)$ preserves trivial cofibrations. Then the dual argument will show the that $\Ch(G)$ preserves trivial fibrations, which will complete the proof. For ease of notation, and since $\Ch(F)$ simply applies $F$ index-wise, we will only write $F$.
			
			Let $0\to A^\bu\to B^\bu\to D^\bu\to0$ be a trivial cofibration, i.e.\@ $D^\bu\in \tilde{\Dd}$.
			Exactness of this short exact sequence means that for all $i$
			\[
				0\to A^i\to B^i\to D^i\to 0
			\]
			is exact. Since $\tilde{\Dd}\subset\dg\tilde{\Dd}$ it is $D^i\in \Dd$. Hence by $(1a)$ $F$ preserves exactness for each $i$, so
			\[
				0\to FA^\bu\to FB^\bu\to FD^\bu\to 0
			\]
			is exact in $\Ch(\Bb)$.\\
			It remains to show that $FD\in \tilde{\Dd'}$. For $i\in \Z$ fixed we can write
			\[
				0\to Z^iD^\bu\to D^i\to B^iD^\bu\to 0
			\]
			and by exactness $B^iD^\bu=Z^{i+1}D^\bu\in \Dd$. We apply $F$ to this short exact sequence, as well as the analogue short exact sequence for $i+1$ to find the diagram
			\[
				\begin{tikzcd}
					&&&0\ar[d] \\
					0\ar[r] & F(Z^iD^\bu) \ar[d,"f"]\ar[r] & FD^i \ar[d,equal] \ar[r] & F(Z^{i+1}D^\bu)\ar[r]\ar[d] & 0\\
					0 \ar[r] & Z^iFD^\bu\ar[r] & FD^i \ar[r] & FD^{i+1}\ar[d]\\
					&&& F(Z^{i+2}D^\bu)\ar[d]\\
					&&& 0
				\end{tikzcd}
			\]
			By applying the five lemma to
			\[
				\begin{tikzcd}
					0 \ar[r]\ar[d,two heads]& 0 \ar[r]\ar[d,"\cong"] & F(Z^iD^\bu) \ar[r]\ar[d,"f"] & FD^i \ar[r]\ar[d,"\cong"] & F(Z^{i+1}D^\bu)\ar[d,hookrightarrow]\\
					0 \ar[r] & 0\ar[r] & Z^iFD^\bu \ar[r] & FD^i \ar[r] &  FD^{i+1}
				\end{tikzcd}
			\]
			we see that $f$ is an isomorphism. Therefore by $(1b)$ it is $Z^iFD^\bu\in \Dd'$ for all $i$. It also follows that $FD^i\to Z^{i+1}FD^\bu$ is epic, so $FD^\bu$ is exact.
		\end{proof}

\section[The \textit{n}-variable case]{The $n$-variable case}\label{sec:n-var-case}
	We now consider the case of multiple variables. Throughout the section we assume $n\geq 2$. The main example to keep in mind is the tensor-hom adjunction. First we will recall the definition and basic properties of $n$-variable adjunctions, see \cite{cgr2014} or \cite{Recktenwald2019} for a general treatment of $n$-variable adjunctions and \cite{Hovey02} for Quillen adjunctions in two variables. 

	We will give a new simplified criterion for an $n$-variable adjunction of abelian categories to induce a Quillen adjunction. In particular our criteria can be checked on the level of the underlying abelian category instead of chain complexes.
	
	\subsection{Multivariable Adjunctions}
		
		\begin{defi}\label{defi:n-var-adjunction}
			Let $\Aa_0,\ldots,\Aa_n$ be abelian categories. An \emph{adjunction of $n$ variables} consists of functors
			\[
				F:\Aa_1\times\ldots\times\Aa_n\to \Aa_0
			\]
			and
			\[
				G^j:\Aa_1^{op}\times \overset{\widehat{j}}{\ldots}\times \Aa_{n}^{op}\times \Aa_0\to \Aa_j
			\]
			for each $j$ (where the hat indicates that the entry $j$ is skipped) such that for each collection of elements $a_i\in \Aa_i$ there is an isomorphism
			\[
				\Mor_{\Aa_0}(F(a_1, \ldots, a_n),a_0)\cong \Mor_{\Aa_j}(a_j,G^j(a_1,\overset{\widehat{j}}{\ldots}, a_n,a_0))
			\]
			which is natural in each entry.
			We will say, that $F$ is an \emph{$n$-variable left adjoint with right adjoints $G^j$} to indicate an adjunction of $n$ variables.

		\end{defi}
		
		\begin{emp}\label{emp:restriction-of-adjoint}
			We briefly explain how an $n$-variable adjunction restricts to $(n-1)$-variable adjunctions.
			
			Let $F:\Aa_1\times\ldots\times\Aa_n\to \Aa_0$ be an $n$-variable left adjoint. If we fix an object $A_i\in \Aa_i$ for a fixed $i$ this clearly induces an $(n-1)$-variable left adjoint
			\[
				F(-,\ldots, A_i,\ldots, -):\Aa_1\times\skipind{i}\times\Aa_n\to \Aa_0.
			\]
			Its right adjoints are $G^j(-,\ldots, A_i,\ldots, -)$ for $j\neq i$.
			On the other hand if we consider the $n$-variable functor
			\[
				G^j:\Aa^{op}_1\times\skipind{j}\times \Aa_n^{op}\times\Aa_0\to \Aa_j
			\]
			and fix an object in $\Aa_i$ for $i\neq 0$ then it will again have $F$ as a left adjoint. However if we fix $A_0\in \Aa_0$, we have to take
			\[
				G^{j,op}:\Aa_1\times\skipind{j}\times\Aa_n\to \Aa_j^{op}
			\]
			and we see that it is a left $(n-1)$-variable adjoint as follows:
			\begin{align*}
				\Mor_{\Aa_j^{op}}(G^{j,op}(A_1,\skipind{j},A_n,A_0),A_j)&=\Mor_{\Aa_j}(A_j,G^{j}(A_1,\skipind{j},A_n,A_0))\\
					&\cong \Mor_{\Aa_0}(F(A_1,\ldots, A_n),A_0)\\
					&\cong \Mor_{\Aa_k}(A_k,G^k(A_1,\skipind{k},A_n,A_0))
			\end{align*}
			and we see that the right adjoints of $G^{j,op}(-,\skipind{j}, -,A_0)$ are $G^k(-,\skipind{k},-,A_0)$.
		\end{emp}

		\begin{defi}\label{def:functor-on-complexes}
			Let $F:\Aa_1\times\ldots\times \Aa_n\to \Aa_0$ be an $n$-variable left adjoint with right adjoints $G^j$. We define
			\[
			\Ch(F):\Ch(\Aa_1)\times\ldots\times \Ch(\Aa_n)\to \Ch(\Aa_0)
			\]
			as the sum total complex of the $n$-complex
			\[
			C^\alpha = F(A_1^{\alpha_1},\ldots, A_n^{\alpha_n}) \text{ for } \alpha \in \Z^n.
			\]
			Similarly we define
			\[
			\Ch(G^j):\Ch(\Aa_1)\times \skipind{j}\times \Ch(\Aa_n)\to \Ch(\Aa_0)
			\]
			as the product total complex of the $n$-complex
			\[
				D^\alpha = G^j(A_1^{-\alpha_1},\skipind{j},A_n^{-\alpha_n},A_0^{\alpha_0}) \text{ for } \alpha\in \Z^n.
			\]
		\end{defi}
		
		\begin{emp}
			We claim that $F$ and $G^j$ also induce functors on chain complexes. There are various technical hurdles to proving such a result. We believe that the clearest formulation can be achieved with the language of fibrations and opfibrations of \multi{categories}. We refrain from including the formalism here for brevity.
			For details, as well as a proof that the induced functors on chain complexes are an adjunction of $n$ variables, see \cite[Section 1.5]{Recktenwald2019}, in particular Theorem 1.5.11. 
		\end{emp}

	\subsection{Quillen adjunctions of  multiple variables}
	
		In \cite{Hovey99} Hovey goes over the general theory of how to generalize the notion of Quillen adjunctions to the case of two variables. In particular the example of the $\otimes \dashv \Shom$ adjunction for modules or sheaves has garnered much interest. The following is direct generalization of the construction used in the definition for Quillen adjunctions of two variables. 
		
		\begin{defi}
			Let $f:A\to B$ be a morphism in a category $\Cc$. We can view it as a functor $f:\Delta_1\to \Cc$ where $\Delta_1$ is the 1-simplex category $\set{0<1}$. Given a collection of morphisms $f_i:A_i\to B_i\in \Cc_i$, there is an induced functor $\Delta_1^n\to \Cc_1\times\ldots\times \Cc_n$. Let 
			\[
				F:\Cc_1\times\ldots\times\Cc_n\to \Cc_0,
			\]
			be a functor. Composition with $F$ induces $F(f_1,\ldots, f_n):\Delta_1^n\to \Cc_0$. If $\Cc_0$ has small colimits we can define the \emph{pushout product}
			\[
				\square_F (f_i) = \square_F(f_1,\ldots,f_n): \colim_{\Delta_1^n\backslash\set{(1,\ldots, 1)}} F(f_1,\ldots, f_n) \to \colim_{\Delta_1^n}F(f_1,\ldots, f_n).
			\]
			Note that $\Delta_1^n$ has the terminal object $(1,\ldots, 1)$, therefore the colimit on the right is simply $F(B_1,\ldots, B_n)$. 
			
			Similarly we define the \emph{pullback product}
			\[
				\blacksquare_F (f_i) = \blacksquare_F(f_1,\ldots,f_n): \lim_{\Delta_1^n} F(f_1,\ldots, f_n)\to \lim_{\Delta_1^n\backslash \set{(0,\ldots, 0)}} F(f_1,\ldots, f_n).
			\]
		\end{defi}
		\begin{example}
			Let $\Aa=\lmod{R}$ for a commutative ring and consider $\otimes:\Aa\times\Aa\to \Aa$. Then the pushout product of two morphisms $f:A\to B$ and $g:C\to D$ in $\Aa$ is
			\[
				A\otimes D\underset{A\otimes C}{\sqcup} B\otimes C\to B\otimes D.
			\]
		\end{example}

		The following properties of the pushout product are important for us. Clearly there are dual properties for the pullback product. 
		
		For an $n$-variable functor we can fix an object in one of the domain categories to obtain a functor in $(n-1)$ variables. We will want to argue by induction later, therefore it is useful to understand how the pushout product behaves for these restricted functors.
		
		\begin{lemma}\label{lem:pushout-product-and-restriction}
			Let $F:\Aa_1\times\ldots\times\Aa_n\to \Aa_0$ be a functor of abelian categories. Fix an object $X\in \Aa_n$. Then
			\[
				\square_{F(-,X)}(f_1,\ldots, f_{n-1})\cong \square_F(f_1,\ldots,f_{n-1},0\to X)
			\]
			is an isomorphism in $\Aa_0^{\Delta_1}$.
		\end{lemma}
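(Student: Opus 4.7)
The plan is to unfold both pushout products as colimits over the respective cube-shaped indexing diagrams and to observe that the additional slice on the right-hand side consists only of zero objects, so does not affect the colimit. Spelling things out, the right-hand side is the morphism
\[
	\colim_{\Delta_1^n \setminus \set{(1,\ldots,1)}} F(f_1,\ldots,f_{n-1}, 0\to X) \longrightarrow F(B_1,\ldots,B_{n-1},X),
\]
while the left-hand side is the morphism
\[
	\colim_{\Delta_1^{n-1}\setminus \set{(1,\ldots,1)}} F(f_1,\ldots, f_{n-1}, X) \longrightarrow F(B_1,\ldots, B_{n-1},X).
\]
The codomains are literally equal, so the task reduces to identifying the domains in a way compatible with their structure maps to $F(B_1,\ldots,B_{n-1},X)$.

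Next I would decompose $\Delta_1^n \setminus \set{(1,\ldots,1)}$ along its last coordinate into the bottom face $\set{\alpha_n=0}\cong \Delta_1^{n-1}$ and the top face minus the terminal vertex $\set{\alpha_n=1}\setminus\set{(1,\ldots,1)} \cong \Delta_1^{n-1}\setminus\set{(1,\ldots,1)}$. Because $F$ is an $n$-variable left adjoint in the sense of \sref{Definition}{defi:n-var-adjunction}, it commutes with colimits (and in particular preserves the zero object) in its last variable, so every entry of the cube on the bottom face is zero, and the connecting morphisms from bottom to top are consequently zero as well.

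Finally, since every object on the bottom face is zero, any cocone out of the whole cube is determined by its restriction to the top-minus-tip, and conversely every cocone on the top face extends trivially to the whole cube. Hence the colimit over $\Delta_1^n \setminus \set{(1,\ldots,1)}$ is canonically identified with the colimit over $\Delta_1^{n-1}\setminus \set{(1,\ldots,1)}$ of the restricted diagram $F(f_1,\ldots,f_{n-1},X) = F(-,X)(f_1,\ldots,f_{n-1})$. Under this identification the induced maps to the common codomain $F(B_1,\ldots,B_{n-1},X)$ agree, giving the desired isomorphism in $\Aa_0^{\Delta_1}$. The main obstacle is the bookkeeping of the cube decomposition and verifying that the identification respects the arrow structure rather than merely the domains, but this follows formally from the universal property of the colimit once the zero-slice observation is in place.
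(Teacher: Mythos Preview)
Your argument is correct and follows essentially the same approach as the paper: both note that the codomains coincide and that the bottom face $\{\alpha_n=0\}$ of the $n$-cube consists entirely of zero objects, so a cocone on the punctured $n$-cube is the same datum as a cocone on the punctured $(n-1)$-cube. One caveat worth flagging: the lemma as stated assumes only that $F$ is a functor of abelian categories, yet both your proof and the paper's rely on $F(X_1,\ldots,X_{n-1},0)=0$, which requires an additional hypothesis such as additivity or left adjointness in the last variable (the paper simply asserts it without comment, while you import it explicitly from \sref{Definition}{defi:n-var-adjunction}).
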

			\begin{proof}
				We show that 
				\[
					\colim_{\Delta_1^n\backslash\set{(1,\ldots,1)}} F(f_1,\ldots, f_{n-1},0\to X)
				\]
				satisfies the universal property of 
				\[
					\colim_{\Delta_1^{n-1}\backslash\set{(1,\ldots,1)}} F(f_1,\ldots, f_{n-1},X).
				\]
				Let $Z$ be an object together with maps $F(X_1,\ldots, X_{n-1},X)\to Z$, where $X_i\in \set{A_i,B_i}$, but not all $X_i=B_i$, such that all resulting diagrams commute. In order to give a morphism
				\[
					\colim_{\Delta_1^n\backslash\set{(1,\ldots,1)}} F(f_1,\ldots, f_{n-1},0\to X)\to Z
				\]
				it is enough to give morphisms
				\[
					F(X_1,\ldots, X_{n-1},X)\to Z \text{ where }X_i\in \set{A_i,B_i} \text{ but not all }X_i=B_i
				\]
				since $F(X_1,\ldots, X_{n-1},0)=0$ for all possible collections $(X_i)$. This is exactly the given datum, hence the result follows from the uniqueness of colimits.
			\end{proof}
	
		\begin{lemma}\label{lem:coker-of-pushout-product}
			Let $F:\Aa_1\times\ldots\times \Aa_n\to \Aa_0$ be right exact in each variable and let $f_i:A_i\to B_i$ be a morphisms in $\Aa_i$ with cokernel $C_i$ for $i\in \set{1,\ldots, n}$. Then \setcounter{equation}{0}
			\begin{equation}
				F(B_1,\ldots, B_{n-1},A_n)\to \colim_{\Delta_1^n\backslash \set{(1,\ldots,1)}} F(f_1,\ldots, f_n)\to \colim_{\Delta_1^n\backslash \set{(1,\ldots,1)}} F(f_1,\ldots,f_{n-1},0\to C_n)\to 0
			\end{equation}
			and
			\begin{equation}\label{eq:lem:coker-of-pushout-product}
				\begin{tikzcd}
					\colim_{\Delta_1^n\backslash \set{(1,\ldots,1)}} F(f_1,\ldots, f_n)\ar[r,"{\square_Ff_i}"] & F(B_1,\ldots, B_n)\ar[r] & F(C_1,\ldots, C_n)\ar[r] &0
				\end{tikzcd}
			\end{equation}
			are exact. In particular $\coker(\square_Ff_i)\cong F(C_1,\ldots, C_n)$.
		\end{lemma}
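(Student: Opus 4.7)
My plan is to induct on $n$. The case $n=1$ is immediate: $\square_F f_1 = f_1$, so the second sequence is just right exactness of $F$, while the first sequence has third term $F(0)=0$ and reduces to a trivial statement. For the inductive step I would split the cube $\Delta_1^n\setminus\set{(1,\ldots,1)}$ by its last coordinate and introduce
\[
	P_X \;=\; \colim_{\Delta_1^{n-1}\setminus\set{(1,\ldots,1)}} F(f_1,\ldots,f_{n-1},X) \qquad (X\in \Aa_n).
\]
The usual computation of the pushout-product colimit as a pushout of its last-coordinate slices identifies $P := \colim_{\Delta_1^n\setminus\set{(1,\ldots,1)}} F(f_1,\ldots,f_n)$ with $P_{B_n}\sqcup_{P_{A_n}}F(B_1,\ldots,B_{n-1},A_n)$, and since $F(\ldots,-,0)=0$ the colimit appearing on the right in the first sequence collapses to $P_{C_n}$.

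For the first sequence: under the comparison map $P\to P_{C_n}$ the summand $F(B_1,\ldots,A_n)$ is annihilated (it factors through $F(\ldots,0)=0$), while $P_{B_n}$ is sent via the naturally induced $P_{B_n}\to P_{C_n}$. Thus $\coker(F(B_1,\ldots,A_n)\to P)\cong P_{B_n}/\im(P_{A_n})$. The key technical point is that $X\mapsto P_X$ is right exact in $X$, being a finite colimit of the right-exact functors $F(X_1^{\alpha_1},\ldots,X_{n-1}^{\alpha_{n-1}},-)$. Applying this to $A_n\to B_n\to C_n\to 0$ identifies $P_{B_n}/\im(P_{A_n})$ with $P_{C_n}$ and finishes the first claim.

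For the second sequence: stack the exact sequence just established on top of $F(B_1,\ldots,A_n)\to F(B_1,\ldots,B_n)\to F(B_1,\ldots,B_{n-1},C_n)\to 0$ (right exactness of $F$ in the last variable) in a commutative diagram. Since the leftmost vertical map is the identity and $P_{B_n}\twoheadrightarrow P_{C_n}$ is surjective by the previous paragraph, a cokernel chase yields $\coker(P\to F(B_1,\ldots,B_n))\cong \coker(P_{C_n}\to F(B_1,\ldots,B_{n-1},C_n))$. By the inductive hypothesis applied to the $(n-1)$-variable functor $F(-,\ldots,-,C_n)$ together with morphisms $f_1,\ldots,f_{n-1}$, this final cokernel equals $F(C_1,\ldots,C_{n-1},C_n)$, as desired.

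I expect the main obstacle to be the diagram bookkeeping: verifying the pushout-product decomposition of $P$, keeping track of the comparison morphisms among the auxiliary colimits, and in particular checking that the image of $P_{B_n}$ inside $F(B_1,\ldots,B_{n-1},C_n)$ coincides with that of $P_{C_n}$. This coincidence is exactly where surjectivity of $P_{B_n}\to P_{C_n}$ from the first claim gets used, so right-exactness of $P_{(-)}$ is the conceptual heart of the argument.
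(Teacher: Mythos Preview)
Your proposal is correct and follows essentially the same strategy as the paper: prove the first sequence by right-exactness of the relevant colimit in the last variable, then deduce the second by the three-row diagram with identity on the left column and induction applied to $F(-,\ldots,-,C_n)$. The only organisational difference is in the first sequence: the paper avoids the pushout decomposition entirely by taking the colimit over $\Delta_1^n\setminus\{(1,\ldots,1)\}$ of the pointwise-exact sequence of diagrams
\[
F(f_1,\ldots,f_{n-1},\id_{A_n})\to F(f_1,\ldots,f_n)\to F(f_1,\ldots,f_{n-1},0\to C_n)\to 0
\]
and observing that the leftmost colimit is $F(B_1,\ldots,B_{n-1},A_n)$ since $(1,\ldots,1,0)$ is terminal in that diagram; you instead pass through the identification $P\cong P_{B_n}\sqcup_{P_{A_n}}F(B_1,\ldots,B_{n-1},A_n)$ and right-exactness of $X\mapsto P_X$. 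Both routes encode the same fact (right-exactness of colimits), and your diagram for the second sequence is exactly the one in the paper. One small remark: in the cokernel chase for the second sequence you do not actually need the surjectivity of $P_{B_n}\to P_{C_n}$ separately---once the left vertical is the identity and both rows are right exact, $\coker(\square_F f_i)\cong\coker(\square_{F(-,C_n)}(f_1,\ldots,f_{n-1}))$ follows formally.
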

			\begin{proof}
				We begin with the first sequence. In $\Aa_n^{\Delta_1}$ 
				\[
					\id_{A_n}\to f\to (0\to C_n)\to 0
				\]
				is exact, as is obvious from the following diagram
				\[
					\begin{tikzcd}
						A_n\ar[r]\ar[d] & A_n\ar[d]\\
						A_n\ar[r]\ar[d] & B_n\ar[d]\\
						0_n\ar[r]\ar[d] & C_n\ar[d]\\
						0_n\ar[r] & 0_n
					\end{tikzcd}
				\]
				Now note that
				\[
					\colim_{\Delta_1^n} F(f_1,\ldots, f_{n-1},\id_{A_n})=F(B_1,\ldots, B_{n-1},A_n)
				\]
				and that $F(B_1,\ldots, B_{n-1},A_n)$ appears in the diagram $F(f_1,\ldots,f_{n-1},\id_{A_n})$ (to be precise at position $(1,\ldots, 1,0)$) we see
				\[
					\colim_{\Delta_1^n\backslash \set{(1,\ldots,1)}} F(f_1,\ldots, f_{n-1},\id_{A_n})\cong F(B_1,\ldots, B_{n-1},A_n)
				\]
				We take the colimit over $\Delta_1^n\backslash \set{(1,\ldots, 1)}$ of the exact sequence of diagrams
				\[
					F(f_1,\ldots, f_{n-1},\id_{A_n})\to F(f_1,\ldots, f_{n})\to F(f_1,\ldots, f_{n-1},0\to C_n)\to 0 
				\]
				and the claim follows.
				
				The second statement is proved via induction. The case $n=1$ is trivial. Now consider the exact diagram, where we simply write $c$ for $\colim_{\Delta_1^n\backslash\set{(1,\ldots, 1)}}$
				\[
					\begin{tikzcd}
						F(B_1,\ldots, B_{n-1},A_n)\ar[d,equal]\ar[r] & cF(f_1,\ldots,f_n)\ar[d,"\square_Ff_i"] \ar[r]& cF(f_1,\ldots, f_{n-1},0\to{C_n}) \ar[r]\ar[d,"(*)"] & 0\\
						F(B_1,\ldots,B_{n-1},A_n)\ar[r] \ar[d]& F(B_1,\ldots, B_n)\ar[r] \ar[d]& F(B_1,\ldots, B_{n-1},C_n)\ar[r] \ar[d]& 0\\
						0 \ar[r] & \coker(\square_Ff_i)\ar[r] & \coker(*) \ar[r] & 0.
					\end{tikzcd}
				\]
				By \cref{lem:pushout-product-and-restriction} the morphism marked $(*)$ is $\square_{F(-,C_n)}(f_1,\ldots,f_{n-1})$. Also 
				\[
					F(-,C_n):\Aa_1\times\ldots\times \Aa_{n-1}\to \Aa_0
				\]
				is right exact in each variable, therefore we can apply the induction hypothesis and see that 
				\[
					\coker(\square_{F(-,C_n)}(f_1,\ldots, f_{n-1}))\cong F(C_1,\ldots, C_n)
				\]
				and by exactness
				\[
					\coker(\square_{F(-,C_n)}(f_1,\ldots, f_{n-1}))\cong \coker(\square_Ff_i).
				\]
			\end{proof}
		\begin{lemma}\label{lem:pushout-product-pushout-square}
			The following square is a pushout
			\[
				\begin{tikzcd}
					\colim_{\Delta_1^n\backslash \set{(1,\ldots,1)}}F(f_1,\ldots, f_{n-1},0\to A_n)\ar[r]\ar[d,"{\square_{F(-,A_n)}(f_1,\ldots, f_{n-1})}"] & \colim_{\Delta_1^n\backslash \set{(1,\ldots,1)}}F(f_1,\ldots, f_{n-1},0\to B_n)\ar[d]\\
					F(B_1,\ldots, B_{n-1},A_n)\ar[r] & \colim_{\Delta_1^n\backslash \set{(1,\ldots,1)}}F(f_1,\ldots,f_n)
				\end{tikzcd}.
			\]
		\end{lemma}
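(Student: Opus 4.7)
The plan is to verify the universal property of the pushout directly, by decomposing the indexing category $\Delta_1^n\backslash\set{(1,\ldots,1)}$ according to the value of the last coordinate.

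First, split it into the two layers $C_0 = \Delta_1^{n-1}\times\set{0}$ and $C_1 = (\Delta_1^{n-1}\backslash\set{(1,\ldots,1)})\times\set{1}$, which together exhaust $\Delta_1^n\backslash\set{(1,\ldots,1)}$; the only remaining morphisms, namely those crossing from $C_0$ to $C_1$, are induced by $f_n$. On $C_0$ the diagram $F(f_1,\ldots,f_n)$ restricts to $F(f_1,\ldots,f_{n-1},A_n)$ over $\Delta_1^{n-1}$, and since $\Delta_1^{n-1}$ has terminal object $(1,\ldots,1)$, this colimit is $F(B_1,\ldots,B_{n-1},A_n)$. On $C_1$ it restricts to $F(f_1,\ldots,f_{n-1},B_n)$ over $\Delta_1^{n-1}\backslash\set{(1,\ldots,1)}$.

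Second, I would identify a cocone on $F(f_1,\ldots,f_n)$ over $\Delta_1^n\backslash\set{(1,\ldots,1)}$ with target $Z$ as the following data: (i) a morphism $u\colon F(B_1,\ldots,B_{n-1},A_n)\to Z$ coming from the row-$0$ cocone on $C_0$; (ii) a morphism $v\colon \colim_{\Delta_1^{n-1}\backslash\set{(1,\ldots,1)}} F(f_1,\ldots,f_{n-1},B_n)\to Z$ coming from the row-$1$ cocone on $C_1$; subject to the compatibility that both pull back to the same cocone on $F(f_1,\ldots,f_{n-1},A_n)$ over $\Delta_1^{n-1}\backslash\set{(1,\ldots,1)}$ along the transition maps $F(-,\ldots,-,f_n)$. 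The restriction of the row-$0$ data along the inclusion $\Delta_1^{n-1}\backslash\set{(1,\ldots,1)}\hookrightarrow \Delta_1^{n-1}$ is exactly $\square_{F(-,A_n)}(f_1,\ldots,f_{n-1})$, and the analogous restriction on the row-$1$ side is induced by $f_n$. This identifies the colimit with the pushout of these two legs.

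Third, I would apply \cref{lem:pushout-product-and-restriction} together with the fact that $F$ sends a zero object in any slot to zero to identify the top corners of the square in the statement: in $\colim_{\Delta_1^n\backslash\set{(1,\ldots,1)}} F(f_1,\ldots,f_{n-1},0\to A_n)$ every vertex with $x_n=0$ contributes $0$, so the colimit collapses to $\colim_{\Delta_1^{n-1}\backslash\set{(1,\ldots,1)}} F(f_1,\ldots,f_{n-1},A_n)$, and analogously for $B_n$. The left vertical is then the pushout product $\square_{F(-,A_n)}(f_1,\ldots,f_{n-1})$ by \cref{lem:pushout-product-and-restriction}, so the square coincides with the pushout produced in step two. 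I expect the main obstacle to be the clean categorical bookkeeping in step two, namely confirming that the colimit over the \enquote{two-layer} category $\Delta_1^n\backslash\set{(1,\ldots,1)}$ genuinely splits as the pushout of the two row colimits glued along $f_n$; once that combinatorial decomposition is in place, the identifications via \cref{lem:pushout-product-and-restriction} and the vanishing on the $x_n=0$ layer are routine.
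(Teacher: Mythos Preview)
Your proposal is correct and follows essentially the same route as the paper's proof: both verify the universal property by splitting the indexing set $\Delta_1^n\backslash\set{(1,\ldots,1)}$ according to the last coordinate, supplying the required maps $F(X_1,\ldots,X_n)\to Z$ from the three corners of the square. Your write-up is more systematic (phrasing the layer decomposition as a colimit over $C_0$ and $C_1$ glued along the crossing maps, and invoking \cref{lem:pushout-product-and-restriction} explicitly), whereas the paper simply enumerates the cases $X_n=A_n$ versus $X_n=B_n$ and points to where each map comes from; but the underlying argument is the same.
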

			\begin{proof}
				Let
				\[
					\begin{tikzcd}
						\colim_{\Delta_1^n\backslash \set{(1,\ldots,1)}}F(f_1,\ldots, f_{n-1},0\to A_n)\ar[r]\ar[d] & \colim_{\Delta_1^n\backslash \set{(1,\ldots,1)}}F(f_1,\ldots, f_{n-1},0\to B_n)\ar[d]\\
						F(B_1,\ldots, B_{n-1},A_n)\ar[r] & Z
					\end{tikzcd}.
				\]
				be a commutative square. 
				
				We have to give morphisms 
				\[
					F(X_1,\ldots, X_n)\to Z
				\]
				where $X_i\in \set{A_i,B_i}$ but not all $X_i=B_i$. Let $X_n=A_n$, then all possible collection 
				\[
					F(X_1,\ldots, X_{n-1},A_n)
				\] 
				where not all $X_i=B_i$ can be found in $F(f_1,\ldots, f_{n-1}, 0\to A_n)$. For $X_n=B_n$ they can be found in $F(f_1,\ldots, f_{n-1}, 0\to B_n)$. And finally $F(B_1,\ldots, B_{n-1},A_n)\to Z$ is explicitly given.
			\end{proof}
			
		\begin{lemma}\label{lem:p-product-adjunction}
			Let $F:\Aa_1\times\ldots\times\Aa_n\to \Aa_0$ be an $n$-variable left adjoint with right adjoints $G^i:\Aa_1^{op}\times\skipind{i}\times\Aa_n^{op}\times \Aa_0\to \Aa_i$. Fix an index $j>0$, an object $B_j$ of $\Aa_j$, and morphisms $f_i$ in $\Aa_i$ for $j\neq i>0$.

			Then for every object $A_0$ of $\Aa_0$

			\[
				\Hom_{}\left(\colim_{\scriptscriptstyle\Delta_1^n\backslash\set{(1,\ldots,1)}} F(f_1,\ldots,0\to B_j,\ldots, f_n),A_0\right)\cong \Hom_{}\left(B_j,\lim_{\scriptscriptstyle\Delta_1^n\backslash\set{(0,\ldots,0)}}G^j(f_1^{op},\skipind{j},f_n^{op},A_0\to 0)\right)
			\]
			is a natural isomorphism.
		\end{lemma}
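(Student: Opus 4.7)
The plan is to rewrite both sides as limits of Hom-sets and then identify the resulting diagrams of sets vertex by vertex through the $n$-variable adjunction.

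First I would use that $\Hom_{\Aa_0}(-,A_0)$ carries colimits to limits to rewrite the left-hand side as
\[
\lim_{(\Delta_1^n\backslash\set{(1,\ldots,1)})^{op}} \Hom_{\Aa_0}\!\left(F(f_1,\ldots,0\to B_j,\ldots,f_n),A_0\right),
\]
and that $\Hom_{\Aa_j}(B_j,-)$ preserves limits to rewrite the right-hand side as
\[
\lim_{\Delta_1^n\backslash\set{(0,\ldots,0)}} \Hom_{\Aa_j}\!\left(B_j,G^j(f_1^{op},\skipind{j},f_n^{op},A_0\to 0)\right).
\]

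Next I would identify the two indexing categories via the order-reversing self-isomorphism $\phi:\Delta_1^n\to(\Delta_1^n)^{op}$ given coordinatewise by $\epsilon\mapsto 1-\epsilon$, which exchanges $(1,\ldots,1)$ and $(0,\ldots,0)$. Under $\phi$ I identify the $i$-th coordinate on the left with the $i$-th coordinate on the right for $i\neq j$, and the $j$-th coordinate on the left with the ``$A_0$-coordinate'' on the right.

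I would then verify that the two diagrams of sets agree vertex by vertex. At a vertex where the $j$-th coordinate on the left equals $1$ (so, after $\phi$, the $A_0$-coordinate on the right equals $0$) the $j$-th slot of $F$ is $B_j$ and the $A_0$-slot of $G^j$ is $A_0$; the bijection is then the $n$-variable adjunction of \cref{defi:n-var-adjunction} itself. At a vertex where the $j$-th coordinate equals $0$ (so the $A_0$-coordinate on the right equals $1$) the $j$-th slot of $F$ is $0$ and the $A_0$-slot of $G^j$ is $0$. Because $F$ is a left adjoint in each variable it preserves initial objects, hence $F(\ldots,0,\ldots)=0$ and the left-hand term becomes $\Hom(0,A_0)=\set{\ast}$; dually $G^j$ preserves terminal objects in the $\Aa_0$-variable, so $G^j(\ldots,0)=0$ and the right-hand term becomes $\Hom(B_j,0)=\set{\ast}$. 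Both are singletons and the bijection is forced.

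Finally I would check that these vertexwise bijections assemble into an isomorphism of diagrams. For non-trivial vertices this is naturality of the $n$-variable adjunction in all entries; for edges touching a singleton vertex it is automatic. Taking limits yields the claimed isomorphism, which is natural in $A_0$ because every step is. The main obstacle is pure bookkeeping: keeping track of the coordinate permutation and the flip between the two copies of $\Delta_1^n$, and checking that the excluded corners correspond under $\phi$.
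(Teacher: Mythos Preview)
Your argument is correct and follows the same underlying idea as the paper's proof: unfold both sides as (co)limits of Hom-sets indexed over the punctured $n$-cube and then match vertices via the $n$-variable adjunction. The only organizational difference is that the paper first invokes \cref{lem:pushout-product-and-restriction} (and its dual) to collapse the cube from $\Delta_1^n$ to $\Delta_1^{n-1}$, thereby eliminating the $j$-th coordinate on the left and the $A_0$-coordinate on the right; after that reduction every remaining vertex is non-degenerate and the adjunction applies directly. You instead keep the full $n$-cube and handle the degenerate vertices (those with $0$ in the $j$-th slot of $F$, respectively $0$ in the $A_0$-slot of $G^j$) explicitly as singletons, using that $F$ preserves initial objects and $G^j$ preserves terminal objects in the relevant variable. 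Your route avoids appealing to \cref{lem:pushout-product-and-restriction} at the cost of a small extra case distinction; both are equally short.
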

			\begin{proof}
				Without loss of generality let $j=n$. By \cref{lem:pushout-product-and-restriction} 
				\[
					\colim_{\Delta_1^n\backslash \set{1,\ldots, 1}} F(f_1,\ldots,f_{n-1}, 0\to B_n)\cong \colim_{\Delta_1^{n-1}\backslash \set{1\ldots, 1}} F(f_1,\ldots, f_{n-1},B_n).
				\]
				Hence, by the universal property a morphism
				\[
					\colim_{\Delta_1^n\backslash\set{(1,\ldots,1)}} F(f_1,\ldots, f_n)\to A_0
				\]
				is a compatible collection of morphisms
				\[
					F(X_1,\ldots, X_{n-1},B_n)\to A_0
				\]
				where $X_i\in \set{A_i,B_i}$ not all $X_i=B_i$. By the adjunction such a morphism is the same as 
				\[
					B_n\to G^n(X_1,\ldots, X_{n-1},A_0),
				\]
				i.e.\@ a morphism
				\[
					B_n\to \lim_{\Delta_1^{n-1}\backslash \set{(0,\ldots, 0)}} G^n(f_1^{op},\ldots, f_{n-1}^{op}, A_0).
				\]
				The dual statement of \cref{lem:pushout-product-and-restriction} gives 
				\[
					\lim_{\Delta_1^{n}\backslash \set{(0,\ldots, 0)}} G^n(f_1^{op},\ldots, f_{n-1}^{op}, A_0\to 0),
				\]
				as desired.
			\end{proof}
		
		We can now use the pushout and pullback product to generalize \cref{defi:right-left-split} to multiple variables.
		
		\begin{defi}
			Let $F:\Aa_1\times \ldots\times \Aa_n\to \Aa_0$ be a functor. Let $\Ss_i\subset \Ob(\Aa_i)$ for $i=1,\ldots, n$ be classes of objects containing zero. We say $\Ss_1,\ldots, \Ss_n$ is \emph{$F$-right split} if for monomorphisms $f_i\in \Aa_i$ with cokernel in $\Ss_i$ the pushout product $\square_F (f_i)$ is a monomorphism.\\
			Let $G:\Aa_1\times\ldots,\times \Aa_n\to \Aa_0$ be a functor. Let $\Ss_i\subset \Ob(\Aa_i)$ for $i=1,\ldots, n$ be classes of objects. We say $\Ss_1,\ldots, \Ss_n$ is \emph{$G$-left split} if for epimorphisms $g_i\in \Aa_i$ with kernel in $\Ss_i$ the pullback product $\blacksquare_G (g_i)$ is an epimorphism.
		\end{defi}
		\begin{rem}
			Note that 
			\[
				\Ss_1,\ldots,\Ss_n \text{ is $F$-right split}
			\]
			is different from 
			\[
				\text{the class }\Ss_1\times\ldots\times \Ss_n \text{ is $F$-right split}.
			\]
			The second statement means that if $f_i:A_i\to B_i$ is a collection of monomorphisms in $\Aa_i$ such that the cokernel of $f_i$ is in $\Ss_i$, the sequence
			\[
				F(A_1,\ldots, A_n)\to F(B_1,\ldots,B_n)
			\]
			is a monomorphism.
			
			The following proposition shows that for any cotorsion pair $(\Dd,\Ee)$ it is always true that $\Dd,\Ee$ is $\Hom$-left split. However clearly $\Dd\times\Ee$ is not $\Hom$-left split. To see this let us assume that $\Aa$ has enough injectives. We then fix the cotorsion pair $(\Ob(\Aa),\text{injectives})$. Let $f_1:A\to B$ be any epimorphism (the condition that the kernel has to be in $\Aa$ is empty) and $f_2=\id_X:X\to X$ the identity. Note that $f_2$ is epic and has injective kernel. If $\Aa\times \text{injectives}$ were $\Hom$-left split
			\[
				\Hom(B,X)\to \Hom(A,X)
			\]
			would have to be an epimorphism, which is not true in general. 

		\end{rem}
	\begin{prop}\label{prop:d-e-is-hom-left-split}
		Let $\Aa$ be an abelian category with a cotorsion pair $(\Dd,\Ee)$. Then $\Dd,\Ee$ is $\Hom: \Aa^{op}\times \Aa\to \Ab$ left split.
	\end{prop}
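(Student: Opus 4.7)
The plan is to unfold the definition of the pullback product in the special case $G=\Hom\colon\Aa^{op}\times\Aa\to \Ab$ and then to solve the resulting lifting problem using the defining property $\Ext^1(D,E)=0$ of a cotorsion pair. An epimorphism in $\Aa^{op}$ with kernel in $\Dd$ is the same data as a monomorphism $m\colon Q\hookrightarrow P$ in $\Aa$ whose cokernel $D$ lies in $\Dd$, and an epimorphism in $\Aa$ with kernel in $\Ee$ is an epimorphism $e\colon A\twoheadrightarrow B$ with kernel $E\in\Ee$. Tracking the contravariance of $\Hom$ in the first slot, the pullback product $\blacksquare_{\Hom}(g_1,g_2)$ specializes to the canonical map
\[
    \Hom(P,A)\longrightarrow \Hom(P,B)\times_{\Hom(Q,B)}\Hom(Q,A),\qquad \phi\longmapsto (e\phi,\,\phi m),
\]
so the claim amounts to: every commutative square with $\chi\colon Q\to A$, $\psi\colon P\to B$ satisfying $e\chi=\psi m$ admits a diagonal filler $\phi\colon P\to A$.

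To produce the filler I would form the pullback $R := A\times_B P$ with projections $\alpha\colon R\to A$, $\beta\colon R\to P$, giving a short exact sequence $0\to E\to R\xrightarrow{\beta}P\to 0$ and a monomorphism $\gamma := (\chi,m)\colon Q\hookrightarrow R$ with $\beta\gamma=m$, $\alpha\gamma=\chi$, and $\gamma(Q)\cap E=0$. Quotienting by $\gamma(Q)$ produces an extension
\[
    0\to E\to R/\gamma(Q)\to D\to 0
\]
of $D\in\Dd$ by $E\in\Ee$, which therefore splits; fix a section $\sigma\colon D\to R/\gamma(Q)$ of the induced quotient map $\bar\beta\colon R/\gamma(Q)\to D$.

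Finally I would let $N\subseteq R$ denote the preimage of $\sigma(D)$ under $\pi\colon R\twoheadrightarrow R/\gamma(Q)$, so that $N\supseteq\gamma(Q)$. A short diagram chase shows that $\beta|_N\colon N\to P$ is a monomorphism (if $\beta(n)=0$ then $n\in E$ and $\pi(n)\in\sigma(D)\cap\ker\bar\beta=0$, forcing $n\in E\cap\gamma(Q)=0$) whose image contains $m(Q)$ and surjects onto $P/m(Q)=D$ via $N\twoheadrightarrow\sigma(D)\xrightarrow{\sim}D$, forcing $\beta|_N$ to be an isomorphism. Setting $\phi := \alpha\circ(\beta|_N)^{-1}\colon P\to A$, the pullback relation $e\alpha=\psi\beta$ yields $e\phi=\psi$, and $\gamma(q)\in N$ together with injectivity of $\beta|_N$ forces $(\beta|_N)^{-1}(m(q))=\gamma(q)$, whence $\phi m=\alpha\gamma=\chi$.

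The main obstacle is twofold: first, correctly identifying the pullback product in the presence of the contravariance of $\Hom$; and second, converting the abstract splitting $\sigma$ into an honest diagonal filler. The auxiliary subobject $N$ is what handles the second point without invoking extra hypotheses on $\Aa$ such as enough projectives or injectives.
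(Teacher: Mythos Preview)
Your proof is correct and takes a genuinely different route from the paper's. The paper writes out the $3\times 3$ grid of $\Hom$ groups bordered by the relevant $\Ext^1$ terms and performs a nine-step element chase: starting from $(x,y)$ in the fibre product it lifts, subtracts, and corrects until a preimage in $\Hom(P,A)$ is produced, using $\Ext^1(D,E)=0$ twice to guarantee the needed surjectivity along the way. Your argument instead rephrases surjectivity of $\blacksquare_{\Hom}$ as a diagonal-filler problem, forms the pullback $R=A\times_B P$, and observes that the obstruction to splitting $0\to E\to R\to P\to 0$ compatibly with $\gamma\colon Q\hookrightarrow R$ is exactly the class of the quotient extension $0\to E\to R/\gamma(Q)\to D\to 0$, which vanishes; the splitting $\sigma$ then yields the complement $N$ and hence the filler. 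What you gain is a cleaner conceptual picture---this is essentially the standard argument that $\Ext^1$-orthogonality for a cotorsion pair is equivalent to the lifting property, and it makes transparent \emph{why} the single vanishing $\Ext^1(D,E)=0$ suffices. What the paper's approach buys is that it stays entirely inside abelian-group computations without introducing the auxiliary objects $R$, $R/\gamma(Q)$, and $N$; it is longer but more elementary in the sense that no quotient constructions in $\Aa$ are needed.
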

		\begin{proof}
			Let $0\to A\to B\to D\to 0$ and $0\to E\to F\to G\to 0$ be short exact sequences in $\Aa$, where $D\in \Dd$ and $E\in \Ee$ (note that this makes $0\to D\to B\to A\to 0$ into a short exact sequence in $\Aa^{op}$). We have to show that
			\[
				\Hom(B,F)\to \Hom(B,G)\times_{\Hom(A,G)}\Hom(A,F)
			\]
			is an epimorphism.
			We write $(-,-):=\Hom(-,-)$. Consider the following diagram.
			\begin{equation}\label{eq:split-diagram}
				\begin{tikzcd}
					& \Ext^1(D,G)& \Ext^1(D,F)\ar[l]& \overbrace{\Ext^1(D,E)}^{=0}\ar[l]\\
					\Ext^1(A,E)& (A,G)\ar[l]\ar[u] & (A,F)\ar[l,"f"']\ar[u] & (A,E)\ar[l,"g"'] \ar[u]& \ar[l]0\\
					\Ext^1(B,E)\ar[u]& (B,G)\ar[u,"a"']\ar[l] & (B,F)\ar[l,"f'"']\ar[u,"b"'] & (B,E)\ar[l,"g'"']\ar[u,"c"'] & \ar[l]0\\
					\underbrace{\Ext^1(D,E)}_{=0}\ar[u] & (D,G)\ar[l]\ar[u,"a'"'] & (D,F)\ar[l,"f''"']\ar[u,"b'"'] & (D,E)\ar[l,"g''"']\ar[u,"c'"'] & \ar[l] 0\\
					& 0\ar[u] & 0 \ar[u]& 0 \ar[u]
				\end{tikzcd}
			\end{equation}
			
			Let $X\in(B,G)\times_{(A,G)} (A,F)$ be any element. The claim is proven by diagram chasing.
			\begin{enumerate}
				\item 
					Let $x\in (B,G)$ and $y\in (A,F)$ be the images of $X$ under the projections.
				\item 
					By exactness $f(y)\in \ker\left((A,F)\to\Ext^1(A,E)\right)$ .
				\item 
					Because $a(x)=f(y)$ and since $\Ext^1(D,E)=0$ we conclude $x\in \ker\left( (B,G)\to \Ext^1(B,E)\right)$.
				\item 
					By exactness there is $\tilde{x}\in (B,F)$ with $f'(\tilde{x})=x$. Similarly we find $\tilde{y}\in (B,F)$ with $b(\tilde{y})=y$.
				\item 
					Let $z=\tilde{x}-\tilde{y}$. Note that $af'(z)=0$ in $(A,G)$.
				\item 
					Again by exactness we find $\tilde{z}\in (D,G)$ with $a'(\tilde{z})=f'(z)$.
				\item
					By surjectivity there is $\bar{z}\in (D,F)$ with $f''(\bar{z})=\tilde{z}$.
				\item
					Let $z'=b'(\bar z)$. We claim that $z'+\tilde{y}$ is the desired preimage of $X$.
				\item
					We calculate 
					\[
						b(z'+\tilde{y})=b(b'(\bar{z}))+ b(\tilde{y})=y
					\]
					and
					\[
						f'(z'+\tilde{y}) = a'f''(\bar z) + f'(\tilde{y}) = a'(\tilde{z})+f'(\tilde{y}) = f'(z)+f'(\tilde{y})=f'(z+\tilde{y}) = f'(\tilde{x})=x.
					\]
			\end{enumerate}
		\end{proof}

		\begin{defi}\label{defi:sphere-and-disc}
			For an object $A$ in an abelian category $\Aa$ the \emph{sphere} and \emph{disc} complex are the complexes
			\begin{align*}
				S^n(A)&=\ldots\to 0\to \underbrace{A}_{\text{degree } n } \to 0 \to \ldots \\
				D^n(A)&=\ldots\to 0\to \underbrace{A}_{\text{degree } n } = \underbrace{A}_{\text{degree } n+1 } \to 0\to \ldots.
			\end{align*}
			For a complex $(A^\bu,d)$ we also define the \emph{cycles} $Z^nA^\bu=\ker(d^n:A^n\to A^{n+1})$ and \emph{boundaries} $B^nA^\bu=\im(d^{n-1}:A^{n-1}\to A^n)$.
		\end{defi}
	
		\begin{emp}\label{emp:two-missing-ind-notation}
			We introduce notation. Let
			\[
				G^j:\Aa_1^{op}\times\ldots\times \Aa_{j-1}\times\Aa_{j+1}\times\ldots\times \Aa_n^{op}\times \Aa_0\to \Aa_j
			\]
			be a functor and fix an index $k\neq j,0$. We want to restrict $G^j$ to a functor $\Aa_k^{op}\to \Aa_j$. So let fix objects $A_i\in \Aa_i$ for $i\neq j,k$. If $k<j$ we consider
			\[
				G^j(A_1,\ldots, A_{k-1},-,A_{k+1},\ldots, A_{j-1},A_{j+1},\ldots, A_n,A_0)
			\]
			and if $k>j$ we consider
			\[
				G^j(A_1,\ldots, A_{j-1},A_{j+1},\ldots, A_{k-1},-,A_{k+1},\ldots, A_n,A_0).
			\]
			In both cases we simply write $G^j(A_1,\overset{\widehat{j},k}{\ldots},A_n,A_0)$ for brevity.
		\end{emp}
		\begin{prop}\label{prop:n-split-duality}
			Let $F:\Aa_1\times\ldots\times\Aa_n\to \Aa_0$ be an $n$-variable left adjoint with right adjoints $G^j$ between abelian categories with cotorsion pairs $(\Dd_i,\Ee_i)$. Assume that every object of $\Aa_i$ is a quotient of an object in $\Dd_i$ as well as a subobject of an object in $\Ee_i$.
			
			Then the following collections of conditions are equivalent
			
			\vspace{ 1 em}
			\begin{minipage}[l]{0.45\textwidth}
				\begin{enumerate}
					\item[$(0a_k)$] $\Dd_k$ is $F(D_1,\ldots, \overset{k}{-},\ldots, D_n)$-right split for all $D_i\in \Dd_i$ ($i\neq k$) and all $k$.
					\item[$(0b)$] $F(\Dd_1,\ldots, \Dd_n)\subset \Dd_0$.
				\end{enumerate}
			\end{minipage}
			\begin{minipage}[c]{0.1\textwidth}
				
			\end{minipage}
			\begin{minipage}[r]{0.45\textwidth}
				\begin{enumerate}
					\item[$(ja_0)$] $\Ee_0$ is $G^j(D_1,\skipind{j},D_n,-)$-left split for all $D_i\in \Dd_i$.
					\item[$(ja_k)$] $\Dd_k$ is $G^j(D_1,\overset{\widehat{j},k}{\ldots},D_n,E)$-right split for all $D_i\in \Dd_i$ ($i\neq j,k$) and $E\in \Ee_0$.
					\item[$(jb)$\,] $G^j(\Dd_1,\skipind{j},\Dd_n,\Ee_0)\subset \Ee_j$.
				\end{enumerate}
			\end{minipage}
		\end{prop}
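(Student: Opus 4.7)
The plan is to reduce the multi-variable statement to the one-variable case handled in \cref{prop:equivalent-conditions-1-variable} by fixing all but one variable at a time. By \cref{emp:restriction-of-adjoint}, fixing objects in any subset of the slots of $F$ (or of one of the $G^i$) produces an adjunction in the remaining variables, so in particular we can always obtain an honest one-variable adjunction after fixing $n-1$ objects. The hypotheses on enough subobjects in each $\Ee_i$ and enough quotients in each $\Dd_i$ ensure that the assumptions of \cref{prop:equivalent-conditions-1-variable} hold in each of these restricted adjunctions.

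For the equivalences linking collection $0$ with each pair $(ka_0), (kb)$, I fix an index $k$ and objects $D_i\in \Dd_i$ for all $i\neq k$. By \cref{emp:restriction-of-adjoint} this yields a one-variable adjunction
\[
	F(D_1,\ldots,-,\ldots,D_n):\Aa_k\rightleftarrows \Aa_0:G^k(D_1,\skipind{k},D_n,-).
\]
Applying \cref{prop:equivalent-conditions-1-variable} to this adjunction and then quantifying over the fixed collections $D_i\in\Dd_i$ translates the four conditions $(1a),(1b),(2a),(2b)$ of that proposition directly into $(0a_k)$, $(0b)$, $(ka_0)$, and $(kb)$. In particular $(kb)$ is equivalent to $(0b)$ for every $k$, so the ``inclusion'' conditions $(jb)$ for different $j$ are all equivalent to each other and to $(0b)$.

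For the remaining conditions $(ja_k)$ with $j,k$ distinct and nonzero I fix $D_i\in \Dd_i$ for $i\neq j,k$ and $E\in\Ee_0$. Iterating \cref{emp:restriction-of-adjoint} produces a one-variable adjunction
\[
	G^{j,op}(D_1,\overset{\widehat{j},k}{\ldots},D_n,-,E):\Aa_k\rightleftarrows \Aa_j^{op}:G^k(D_1,\overset{\widehat{k},j}{\ldots},D_n,-,E),
\]
where $\Aa_j^{op}$ carries the reversed cotorsion pair $(\Ee_j,\Dd_j)$. The subobject and quotient hypotheses of \cref{prop:equivalent-conditions-1-variable} translate correctly under this reversal and follow from the global assumptions. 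Unwinding $(1a)$--$(2b)$ by reversing arrows in $\Aa_j^{op}$ and quantifying over $D_i$ and $E$ identifies them with $(ja_k)$, $(jb)$, $(ka_j)$, and $(kb)$ respectively; this yields $(ja_k)\Leftrightarrow (jb)$. Combined with the first step, every condition in collection $j$ is equivalent to $(jb)$, every condition in collection $0$ is equivalent to $(0b)$, and $(jb)\Leftrightarrow (0b)$, so all listed collections are equivalent.

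The main obstacle will be bookkeeping rather than substance: one must carefully identify the correct restricted one-variable adjunction produced by \cref{emp:restriction-of-adjoint} and translate the ``right-split'' and ``left-split'' conditions from $\Aa_j^{op}$ back to $\Aa_j$ by reversing arrows, so that a short exact sequence in $\Aa_k$ with cokernel in $\Dd_k$ is recognized, under $G^{j,op}$, as precisely the defining input of $(ja_k)$. Once these identifications are made, every individual equivalence is just an instance of \cref{prop:equivalent-conditions-1-variable}.
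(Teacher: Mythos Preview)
Your overall strategy of restricting to one-variable adjunctions and invoking \cref{prop:equivalent-conditions-1-variable} is exactly the paper's approach. The gap is in the logic you draw from those restrictions.

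You write that the first restriction yields ``$(kb)$ is equivalent to $(0b)$'' and that the second yields ``$(ja_k)\Leftrightarrow(jb)$''. Neither follows: \cref{prop:equivalent-conditions-1-variable} only asserts the equivalence of the \emph{pair} $\{(1a),(1b)\}$ with the \emph{pair} $\{(2a),(2b)\}$, not of the individual entries. Concretely, suppose collection~$0$ holds. From $\{(0a_j),(0b)\}\Leftrightarrow\{(ja_0),(jb)\}$ you obtain $(ja_0)$ and $(jb)$, and likewise $(kb)$ for every $k$. But your second equivalence $\{(ja_k),(jb)\}\Leftrightarrow\{(ka_j),(kb)\}$ now only tells you $(ja_k)\Leftrightarrow(ka_j)$; neither side is yet established, so you are stuck.

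The repair is to use the finer content of the \emph{proof} of \cref{prop:equivalent-conditions-1-variable}, which actually shows the standalone implication $(1b)\Rightarrow(2a)$ and, dually, $(2b)\Rightarrow(1a)$. Applied to the adjunction $G^{j,op}(\ldots,\overset{k}{-},\ldots,E)\dashv G^k(\ldots,\overset{j}{-},\ldots,E)$ between $\Aa_k$ and $\Aa_j^{op}$, the dual implication reads $(kb)\Rightarrow(ja_k)$, which closes the gap. The paper arrives at the same conclusion but, rather than routing through $(kb)$, simply reruns the lifting argument directly: to see that $G^j(\ldots,B_k,\ldots,E)\to G^j(\ldots,A_k,\ldots,E)$ is epic it chooses $D_j\twoheadrightarrow G^j(\ldots,A_k,\ldots,E)$, passes through the $F\dashv G^j$ adjunction, and uses $(0a_k)$ together with $\Ext^1(F(D_1,\ldots,D_n),E)=0$ from $(0b)$. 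Either argument works, but your write-up needs that extra sentence isolating the individual implication; invoking \cref{prop:equivalent-conditions-1-variable} as a black box does not suffice.
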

		Note that the conditions numbered $(0?)$ concern the functor $F$, while the conditions numbered $(j?)$ concern the functor $G^j$. Similarly $(?a_k)$ concern objects in $\Aa_k$. Therefore there is no condition $(ja_j)$.
			\begin{proof}
				The same argument used in the proof of \cref{prop:equivalent-conditions-1-variable} works. Indeed assume that $(0a_k)$ and $(0b)$ hold for all $k$. Then by fixing $D_i\in \Dd_i$ for $i\neq j$, we find an adjunction in one variable
				\[
					F(D_1,\empind{j},D_m)\dashv G^j(D_1,\skipind{j},D_n,-)
				\]
				and by \cref{prop:equivalent-conditions-1-variable} $(ja_0)$ and $(jb)$ hold for all $j$. It remains to show that $(ja_k)$ holds. Let
				\[
					0\to A_k\to B_k\to D_k\to 0
				\]
				be a short exact sequence with $D_k\in \Dd_k$, and fix $D_i\in \Dd_i$ for $i\neq j,k$ and $E\in \Ee_0$. Without loss of generality let $j<k$. We find
				\[
					\begin{tikzcd}[column sep = huge, row sep = small]
						0\ar[d]\\
						G^j(D_1,\skipind{j},D_{k-1},D_k,D_{k+1},\ldots, D_n,E)\ar[dd]\\
						\\
						G^j(D_1,\skipind{j},D_{k-1},B_k,D_{k+1},\ldots, D_n,E)\ar[dd]\\
						\\
						G^j(D_1,\skipind{j},D_{k-1},A_k,D_{k+1},\ldots, D_n,E)\ar[d] & D_j\ar[two heads, l]\ar[luu, dashed,"\exists?", end anchor = south east]\\
						0
					\end{tikzcd}
				\]
				where $D_j\to G^j(D_1,\skipind{j},D_{k-1},A_k,D_{k+1},\ldots, D_n,E)$ is a surjection with $D_j\in \Dd_j$. We have to show that this lifts to $G^j(D_1,\skipind{j},D_{k-1},B_k,D_{k+1},\ldots, D_n,E)$. Consider
				\[\adjustbox{max width=1.021\textwidth, center}{
					\begin{tikzcd}[column sep = scriptsize]
						\Hom(D_j,G^j(D_1,\skipind{j},D_{k-1},B_k,D_{k+1},\ldots, D_n,E))\ar[d]\ar[r,"\cong"] & \Hom(F(D_1,\ldots, D_{k-1}, B_k, D_{k+1},\ldots, D_n),E)\ar[d]\\
						\Hom(D_j,G^j(D_1,\skipind{j},D_{k-1},A_k,D_{k+1},\ldots, D_n,E))\ar[r,"\cong"] & \Hom(F(D_1,\ldots, D_{k-1}, A_k, D_{k+1},\ldots, D_n),E)\ar[d]\\
							& \Ext^1(F(D_1,\ldots, D_n),E)
					\end{tikzcd}}
				\]
				and note that by $(jb)$ it is $\Ext^1(F(D_1,\ldots, D_n),E)=0$, therefore the lift exists.\\
				The converse is left to the reader.
			\end{proof}

		\begin{prop}\label{prop:hovey-gen}
			Let 
			\[
				F:\Aa_1\times\ldots\times\Aa_n\to \Aa_0
			\]
			be an $n$-variable left adjoint between abelian categories. Let $(\Dd_i,\Ee_i)$ be a complete cotorsion pair on $\Aa_i$ for each $i$. Suppose the equivalent conditions of \cref{prop:n-split-duality} hold, e.g. for $F$ we assume
			\begin{enumerate}
				\item[$(0a_j)$] $\Dd_j$ is $F(D_1,\empind{j},D_n)$-right split for each $j$ and for all $D_i\in \Dd_i$ ($i\neq j$), and
				\item[$(0b)$] $F(\Dd_1,\ldots,\Dd_n)\subset \Dd_0$.
			\end{enumerate}
			Then $\Dd_1,\ldots,\Dd_n$ is $F$-right split.\\
			Additionally for short exact sequences
			\[
				\begin{tikzcd}
					0\ar[r] & A_i\ar[r,"f_i"]& B_i\ar[r] & \underbrace{D_i}_{\in \Dd_i}\ar[r] &0
				\end{tikzcd}\text{ in }\Aa_i
			\]
			the cokernel of $\square_F(f_1,\ldots, f_n)$ is isomorphic to $F(D_1,\ldots, D_n)$.
		\end{prop}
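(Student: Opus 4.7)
The cokernel identity is immediate from \cref{lem:coker-of-pushout-product}, because $F$ is right exact in each variable: an $n$-variable left adjoint restricts to a left adjoint in each variable, hence is right exact there. So the bulk of the work lies in showing that $\square_F(f_1,\ldots,f_n)$ is a monomorphism.

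I would proceed by induction on $n$. The base case $n=1$ is exactly hypothesis $(0a_1)$, since $\square_F(f_1)=F(f_1)$ and $\coker(f_1)=D_1\in\Dd_1$.

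For the inductive step, \cref{lem:pushout-product-pushout-square} together with \cref{lem:pushout-product-and-restriction} exhibits the domain $Q$ of $\square_F(f_1,\ldots,f_n)$ as a pushout whose ingredients include the $(n-1)$-variable pushout products $\square_{F(-,A_n)}(f_1,\ldots,f_{n-1})$ and $\square_{F(-,B_n)}(f_1,\ldots,f_{n-1})$. The main obstacle is that only $F(-,D_n)$ inherits the conditions $(0a_j)$ and $(0b)$ (this inheritance crucially requires $D_n\in\Dd_n$), so the induction hypothesis applies directly only to $\square_{F(-,D_n)}(f_1,\ldots,f_{n-1})$ and not to its analogues at $A_n$ or $B_n$.

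To bridge this gap I plan to reformulate the injectivity claim $\Hom$-theoretically. By \cref{thm:assumptions} the cotorsion pair $(\Dd_0,\Ee_0)$ is complete, so $\Ee_0$ cogenerates $\Aa_0$. Using the four-term exact sequence
\[
0\to \ker(\square_F(f_i))\to Q\to F(B_1,\ldots,B_n)\to F(D_1,\ldots,D_n)\to 0
\]
together with $\Ext^1_{\Aa_0}(F(D_1,\ldots,D_n),E)=0$ (from $(0b)$ and $E\in\Ee_0$), proving that $\square_F(f_i)$ is monic reduces to showing that
\[
\Hom_{\Aa_0}(F(B_1,\ldots,B_n),E)\to \Hom_{\Aa_0}(Q,E)
\]
is surjective for every $E\in\Ee_0$.

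Via the adjunction $F\dashv G^n$, in the spirit of \cref{lem:p-product-adjunction}, this surjectivity becomes a diagonal lifting problem for $f_n:A_n\to B_n$ against the pullback product $\blacksquare_{G^n(-,E)}(f_1^{op},\ldots,f_{n-1}^{op})$, whose domain is $G^n(B_1,\ldots,B_{n-1},E)$. By the dual side of the induction (i.e.\@ the pullback-product analogue, obtained by running the inductive argument in the opposite categories, which is legitimised by the symmetry of the hypotheses in \cref{prop:n-split-duality}), this pullback product is an epimorphism with kernel $G^n(D_1,\ldots,D_{n-1},E)$, and condition $(jb)$ places that kernel in $\Ee_n$. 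Since $\coker(f_n)=D_n\in\Dd_n$, the cotorsion pair $(\Dd_n,\Ee_n)$ yields $\Ext^1_{\Aa_n}(D_n,G^n(D_1,\ldots,D_{n-1},E))=0$, so the lift exists and the induction closes.
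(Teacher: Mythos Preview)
Your proof is correct and follows the same route as the paper's: induct on $n$, reduce injectivity of $\square_F(f_i)$ to surjectivity of $\Hom(F(B_1,\ldots,B_n),E)\to\Hom(Q,E)$ for $E\in\Ee_0$, translate via the adjunction into a lifting problem for $f_n$ against $\blacksquare_{G^n(-,E)}(f_1^{op},\ldots,f_{n-1}^{op})$, and close using the inductive hypothesis applied to the $(n-1)$-variable left adjoint $G^n(-,E)^{op}$ together with $\Ext^1_{\Aa_n}(D_n,G^n(D_1,\ldots,D_{n-1},E))=0$; the paper packages this last lifting step as \cref{prop:d-e-is-hom-left-split}. One quibble: your appeal to \cref{thm:assumptions} is misplaced, since completeness of $(\Dd_0,\Ee_0)$ is already a hypothesis of this proposition (and \cref{thm:assumptions} requires a Grothendieck category with an induced model structure, which is not assumed here); just use completeness directly to embed $Q$ into an object of $\Ee_0$.
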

			\begin{proof}
				For each $i=1,\ldots, n$ let 
				\[
					\begin{tikzcd}
						0\ar[r] & A_i\ar[r,"f_i"]& B_i\ar[r] & \underbrace{D_i}_{\in \Dd_i}\ar[r] &0
					\end{tikzcd}
				\]
				be a short exact sequence.\\
				Via induction we will show that $\square_F (f_1,\ldots, f_n)$ is a monomorphism and its cokernel is isomorphic to $F(D_1,\ldots, D_n)$.
				
				The case $n=1$ is trivial. 
				
				Assume the result has been established for all $m<n$. Since $F$ is an $n$-variable left adjoint it is right exact in each variable. Therefore we can conclude $\coker(\square_F(f_i))\cong F(D_1,\ldots, D_n)$ by \cref{lem:coker-of-pushout-product}.
				
				It remains to show that $\square_F(f_1,\ldots, f_n)$ is a monomorphism. We write $0_X$ for the zero morphism $0\to X$ and $c$ for $\colim_{\Delta_1^n\backslash\set{(1,\ldots,1)}}$. By \cref{lem:pushout-product-pushout-square} 
				\[
					\begin{tikzcd}
						cF(f_1,\ldots, f_{n-1},0_{A_n})\ar[r]\ar[d] & cF(f_1,\ldots, f_{n-1},0_{B_n})\ar[d]\\
						F(B_1,\ldots, B_{n-1},A_n)\ar[r] & cF(f_1,\ldots, f_n)
					\end{tikzcd}
				\]
				is a pushout square. The following diagram
				\begin{equation}\label{eq:hovey-gen-diagram}
					\begin{tikzcd}[column sep = tiny, row sep = small]
						&& && && 0\ar[dd]\\
						\\
						&& cF(f_1,\ldots,f_{n-1},0_{A_n})\ar[rr]\ar[dd] &&  cF(f_1,\ldots,f_{n-1},0_{B_n})\ar[rr]\ar[dd]\ar[dl] &&  cF(f_1,\ldots,f_{n-1},0_{D_n})\ar[rr]\ar[dd,"(*)"] && 0\\
						&&& \square\ar[rd,dashed,"\square_F(f_i)"]
						\\
						&& F(B_1,\ldots, B_{n-1},A_n)\ar[rr]\ar[ur]\ar[dd] && 
						F(B_1,\ldots, B_n)\ar[rr] \ar[dd]&& 
						F(B_1,\ldots, B_{n-1},D_n)\ar[rr] \ar[dd]&& 
						0\\
						\\
						0\ar[rr] && F(D_1,\ldots, D_{n-1},A_n)\ar[rr] \ar[dd]&& F(D_1,\ldots, D_{n-1},B_n)\ar[rr]\ar[dd] && F(D_1,\ldots, D_n)\ar[rr] \ar[dd]&& 0\\
						\\
						&& 0 && 0 && 0 
					\end{tikzcd}
				\end{equation}
				has exact rows. To see that it also has exact columns we use \cref{lem:coker-of-pushout-product} and that the arrow labeled $(\ast)$ is monic by induction.
				
				By assumption the cotorsion pairs $(\Dd_i,\Ee_i)$ are complete and we find $E\in \Ee_0$ together with a monomorphism $\square\into E$. By the universal property every morphism $\square \to E$ corresponds uniquely to an element in
				\[
					\Hom(cF(f_1,\ldots,f_{n-1},0_{B_n}),E)\underset{\makebox*{$\Hom$}{\small$\Hom(cF(f_1,\ldots,f_{n-1},0_{A_n}),E)$}}{\times} \Hom(F(B_1,\ldots,B_{n-1},A_n),E).
				\]
				We claim that 
	
				\begin{equation}\label{eq:hovey-gen-claim}
					\begin{tikzcd}
						\Hom(F(B_1,\ldots, B_n),E)\ar[d]\\
						\Hom(cF(f_1,\ldots,f_{n-1},0_{B_n}),E)\underset{\makebox*{$\Hom$}{\small$\Hom(cF(f_1,\ldots,f_{n-1},0_{A_n}),E)$}}{\times} \Hom(F(B_1,\ldots,B_{n-1},A_n),E)
					\end{tikzcd}
				\end{equation}
				is surjective. That means there is a factorization
				\[
					\begin{tikzcd}[column sep = small]
						\square\ar[rr]\ar[rd] && E\\
						&F(B_1,\ldots, B_n)\ar[ru]
					\end{tikzcd}
				\]
				and since the composition is monic, so is $\square\to F(B_1,\ldots, B_n)$.
				Therefore it remains to show that \cref{eq:hovey-gen-claim} is indeed a surjection.
				
				We write $(-,-)=\Hom_{\Aa_0}(-,-)$ and apply $(-,E)$ to Diagram \labelcref{eq:hovey-gen-diagram}. The resulting diagram is Diagram \labelcref{eq:hovey-gen-diagram-hom} on \cpageref{eq:hovey-gen-diagram-hom}.
				
				Next we use that $F$ is an $n$-variable left adjoint. Denote by $G^j$ its right adjoints and use \cref{lem:p-product-adjunction} to see that this is isomorphic to the diagram obtained by applying $\Hom$ to the two short exact sequences
				\[
					0\to A_n\to B_n\to D_n
				\]
				and
				\[
					0\to G^n(D_1,\ldots, D_{n-1},E)\to G^n(B_1,\ldots, B_{n-1},E)\overset{(\dagger)}{\to}\lim_{\Delta_1^n\backslash \set{(0,\ldots,0)}} G^n(f_1^{op},\ldots, f_{n-1}^{op},E\to 0)\to 0.
				\]
				The second sequence is exact by induction and \cref{prop:n-split-duality}. We remind the reader of our comments on the fact that $G^n(-,\ldots, -,E)^{op}$ is a $(n-1)$ variable left adjoint (\cf{} \cref{emp:restriction-of-adjoint}). 
				
				The resulting diagram is Diagram \labelcref{eq:hovey-gen-diagram-adj} on \cpageref{eq:hovey-gen-diagram-adj}, where we write $l$ for $\lim_{\Delta_1^n\backslash\set{(0,\ldots,0)}}$.\\
				It then follows from \cref{prop:d-e-is-hom-left-split} that \cref{eq:hovey-gen-claim} is indeed a surjection.
				\newpage
				\begin{equation}\label{eq:hovey-gen-diagram-hom}
				\begin{aligned}
					\rotatebox{-90}{%
						\begin{tikzcd}[ampersand replacement = \&,row sep = huge, column sep = 1.75em]
							\& \&\& \underbrace{\Ext^1(F(D_1,\ldots, D_n),E)}_{=0}\\
							\& (cF(f_1,\ldots,f_{n-1}, 0_{A_n}),E) \& (cF(f_1,\ldots,f_{n-1},0_{B_n}),E) \ar[l]\& (cF(f_1,\ldots,f_{n-1},0_{D_n}),E) \ar[l]\ar[u]\& 0\ar[l]\\
							\& (F(B_1,\ldots,B_{n-1},A_n),E) \ar[u]\& (F(B_1,\ldots, B_n),E) \ar[l]\ar[u]\& (F(B_1,\ldots,B_{n-1},D_n),E) \ar[l]\ar[u]\& 0\ar[l]\\
							\underbrace{\Ext^1(F(D_1,\ldots, D_n),E)}_{=0} \& (F(D_1,\ldots,D_{n-1},A_n),E)\ar[u]\ar[l] \& (F(D_1,\ldots,D_{n-1}, B_n),E)\ar[l]\ar[u] \& (F(D_1,\ldots,D_n),E)\ar[l]\ar[u] \& 0\ar[l]\\
							\& 0 \ar[u] \& 0\ar[u] \& 0\ar[u]
						\end{tikzcd}
					}
				\end{aligned}
				\end{equation}
				\newpage
				
				\begin{equation}\label{eq:hovey-gen-diagram-adj}
				\begin{aligned}
					\rotatebox{-90}{%
						\begin{tikzcd}[ampersand replacement = \&,row sep = huge, column sep = 1.75em]
							\& \&\& 0\\
							\& (D_n,lG^n(f_1^{op},\ldots,f_{n-1}^{op},E\to 0)) \& (B_n,lG^n(f_1^{op},\ldots,f_{n-1}^{op},E\to 0)) \ar[l]\& (A_n,lG^n(f_1^{op},\ldots,f_{n-1}^{op},E\to 0)) \ar[l]\ar[u]\& 0\ar[l]\\
							\& (A_n,G^n(B_1,\ldots,B_{n-1},E)) \ar[u]\& (B_n,G^n(B_1,\ldots,B_{n-1},E))) \ar[l]\ar[u]\& (D_n,G^n(B_1,\ldots,B_{n-1},E)) \ar[l]\ar[u]\& 0\ar[l]\\
							0 \& (A_n,G^n(D_1,\ldots,D_{n-1},E)) \ar[u]\ar[l] \& (B_n,G^n(D_1,\ldots,D_{n-1},E))\ar[l]\ar[u] \& (D_n,G^n(D_1,\ldots,D_{n-1},E))\ar[l]\ar[u] \& 0\ar[l]\\
							\& 0 \ar[u] \& 0\ar[u] \& 0\ar[u]
						\end{tikzcd}
					}
				\end{aligned}
				\end{equation}
			\end{proof}
			
		This proposition can in particular be applied to the two cotorsion pairs on abelian model categories (\cf{} \cref{thm:cot-pairs-and-model-cats}). This will give criteria for adjunctions to be Quillen.
		
		The following definition is a variant of \cite[Definition 4.2.1]{Hovey99} and the theorem is a generalization of \cite[Theorem 7.2]{Hovey02}.
		\begin{defi}
			Let $\Mm_0,\ldots,\Mm_n$ be model categories with an $n$-variable left adjoint 
			\[
				F:\Mm_1\times\ldots\times\Mm_n\to \Mm_0.
			\]
			If for all collections of cofibrations $f_i$ in $\Mm_i$ for $i>0$
			\begin{enumerate}
				\item $\square_F (f_i)$ is a cofibration in $\Mm_0$, and
				\item $\square_F (f_i)$ is trivial when at least one of the $f_i$ is,
			\end{enumerate}
			then $F$ is an \emph{$n$-variable left Quillen functor}. 
		\end{defi}
		
		\begin{prop}\label{prop:equivalent-formulation-for-quillen-adjunction}
			Let $F:\Mm_1\times\ldots\times\Mm_n\to \Mm_0$ be an $n$-variable left adjoint with right adjoints $G^j$. For $i>0$ let $f_i$ be cofibrations in $\Mm_i$ and let $f_0$ be a fibration in $\Mm_0$. Then the following pairs of conditions are equivalent
			
			\vspace{ 1em }
			\noindent
			\hspace{-1em}
			\begin{minipage}[l]{0.5\textwidth}
				\begin{enumerate}
					\item[$(0a)$] $\square_F (f_i)_{i>0}$ is a cofibration in $\Mm_0$, and
					\item[$(0b)$] 
						it is trivial when at least one of the $f_i$ is.
				\end{enumerate}
			\end{minipage}
			\begin{minipage}[r]{0.5\textwidth}
				\begin{enumerate}
					\item[$(ja)$] $\blacksquare_{G^j} (f_i)_{i\neq j}$ is a fibration in $\Mm_j$, and
					\item[$(jb)$] 
					it is trivial when at least one of the $f_i$ is.
				\end{enumerate}
			\end{minipage}
			\vspace{0.8em}
			
			\noindent In one of these pairs of conditions hold, we will say that $F$ and the $G^j$ form an \emph{$n$-variable Quillen adjunction}. We call each of the $G^j$ an \emph{$n$-variable right Quillen functor}.
		\end{prop}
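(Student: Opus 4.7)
The plan is to reduce the equivalence of the two pairs of conditions to an adjunction-induced bijection of lifting problems, following the pattern of the standard two-variable argument \cite[\S 4.2]{Hovey99}.

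The key technical step is to establish the following generalization of \cref{lem:p-product-adjunction}: for any morphisms $f_i \in \Mm_i^{\Delta_1}$ for $i = 1,\ldots, n$ and any morphism $g \in \Mm_0^{\Delta_1}$, and for any fixed $j \in \set{1,\ldots,n}$, there is a natural bijection between commutative squares $\square_F(f_i)_{i>0} \to g$ in $\Mm_0$ and commutative squares $f_j \to \blacksquare_{G^j}(f_1,\skipind{j},f_n,g)$ in $\Mm_j$, and this bijection preserves the existence of diagonal fillers. The argument mirrors the proof of \cref{lem:p-product-adjunction}: a commuting square of the first kind is, by the universal property of the colimit, a compatible family of morphisms $F(X_1,\ldots,X_n) \to (\text{source or target of } g)$ indexed by the vertices of $\Delta_1^n$; applying the $n$-variable adjunction in the $j$-th slot translates this datum into the analogous compatible family defining a square of the second kind. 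Conceptually, $\square_F$ and $\blacksquare_{G^j}$ assemble the $n$-variable adjunction $F \dashv (G^j)_{j}$ into an $n$-variable adjunction on the arrow categories $\Mm_i^{\Delta_1}$, and the desired bijection is simply the underlying adjunction bijection.

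With this bijection in hand, the equivalence of the two pairs follows from the standard lifting-property characterizations of (trivial) cofibrations and fibrations. Specifically, $(0a)$ asserts that $\square_F(f_i)_{i>0}$ has the LLP against every trivial fibration $g$ in $\Mm_0$; by the bijection this is equivalent to every cofibration $f_j$ in $\Mm_j$ having the LLP against $\blacksquare_{G^j}(f_1,\skipind{j},f_n,g)$ for every trivial fibration $g$, which --- read in the universal sense of the definition of an $n$-variable Quillen adjunction --- is precisely $(ja)$. The trivial case $(0b) \iff (jb)$ is analogous, with the triviality of one of the $f_i$ matching on either side of the bijection.

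The main obstacle is rigorously setting up the bijection in the general $n$-variable case, since \cref{lem:p-product-adjunction} only covers the special case where one factor has the form $0 \to B$. A clean route is induction on $n$, using \cref{lem:pushout-product-and-restriction} to fix one variable at a time and then invoking the ordinary adjunction $F\dashv G^j$ in the $j$-th slot to swap pushout products for pullback products one variable at a time; once this bijection is in place, the remainder of the proof is formal.
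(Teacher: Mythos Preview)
Your approach is exactly what the paper intends: its proof is the single sentence ``This is a simple exercise in adjunctions and the fact that (co)fibrations are characterized by lifting properties,'' and the bijection of lifting problems you set up (the arrow-category adjunction between $\square_F$ and $\blacksquare_{G^j}$) is precisely that exercise.

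One small correction to your write-up: the conditions do not decompose as $(0a)\Leftrightarrow(ja)$ and $(0b)\Leftrightarrow(jb)$ separately. For instance, to prove $(ja)$ you must show $\blacksquare_{G^j}(f_i)_{i\neq j}$ has the right lifting property against every \emph{trivial} cofibration $h$ in $\Mm_j$; under your bijection this becomes asking that $\square_F(f_1,\ldots,h,\ldots,f_n)$ lift against the fibration $f_0$, which needs $\square_F(\ldots)$ to be a \emph{trivial} cofibration --- and that uses $(0b)$ (triviality from the trivial input $h$) as well as $(0a)$. Symmetrically, deducing $(0a)$ from the $(j)$-side uses both $(ja)$ and $(jb)$. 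So the argument should be phrased as a single equivalence between the packages $(0a)\wedge(0b)$ and $(ja)\wedge(jb)$, by splitting into the cases ``which of $f_0,f_1,\ldots,f_n$ is trivial'' rather than trying to match the $a$- and $b$-clauses individually. With that adjustment your outline is complete.
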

			\begin{proof}
				This is a simple exercise in adjunctions and the fact that (co)fibrations are characterized by lifting properties.

			\end{proof}

			\begin{thm}\label{thm:hovey-generalization}
				Let 
				\[
					F:\Mm_1\times\ldots\times\Mm_n\to \Mm_0
				\]
				be an $n$-variable left adjoint between abelian model categories with right adjoints $G^j$. We write $\Dd_i$ and $\Ee_i$ for the classes of cofibrant and fibrant objects in $\Mm_i$ respectively. Suppose the following conditions hold:
				\begin{enumerate}
					\item The family $\Dd_j$ is $F(D_1,\empind{j},D_n)$-right split for all $D_i\in \Dd_i$ ($i\neq j$) and all $j$.
					\item If $D_i\in \Dd_i$ for all $i>0$, then $F(D_1,\ldots, D_n)\in \Dd_0$.
					\item If additionally at least one of the $D_i$ is acyclic, then so is $F(D_1,\ldots,D_n)$.
				\end{enumerate}
				Then $F$ is an $n$-variable left Quillen functor.
			\end{thm}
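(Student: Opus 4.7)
The plan is to reduce the theorem to a single application of \cref{prop:hovey-gen} applied to the cotorsion pairs that arise from the model structures via \cref{thm:cot-pairs-and-model-cats}, after which being a Quillen adjunction follows by unwinding the definition of an abelian model structure. Concretely, for each $i$ I would use \cref{thm:cot-pairs-and-model-cats} to extract the complete cotorsion pair $(\Dd_i, \Ee_i \cap \Ac_i)$ on the underlying abelian category of $\Mm_i$. Hypotheses (1) and (2) of the theorem then read verbatim as conditions $(0a_j)$ and $(0b)$ of \cref{prop:hovey-gen} for the functor $F$ with respect to these cotorsion pairs, so \cref{prop:hovey-gen} is directly applicable.

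Given cofibrations $f_i : A_i \to B_i$ in $\Mm_i$, each $f_i$ is by definition a monomorphism with cokernel $D_i \in \Dd_i$. \cref{prop:hovey-gen} then supplies two things at once: the pushout product $\square_F(f_i)$ is a monomorphism, and its cokernel is isomorphic to $F(D_1,\ldots,D_n)$. Hypothesis (2) places this cokernel in $\Dd_0$, so $\square_F(f_i)$ is a monomorphism with cofibrant cokernel, i.e.\ a cofibration in $\Mm_0$.

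For the triviality part I would rely on \cref{lem:trivial-cofib}, according to which trivial cofibrations in an abelian model category are precisely the monomorphisms with cofibrant acyclic cokernel. If one of the $f_k$ is additionally trivial, then $D_k \in \Dd_k \cap \Ac_k$, and hypothesis (3) then forces $F(D_1,\ldots,D_n)$ to be acyclic; combined with the previous paragraph, the cokernel $F(D_1,\ldots,D_n)$ lies in $\Dd_0 \cap \Ac_0$, and \cref{lem:trivial-cofib} identifies $\square_F(f_i)$ as a trivial cofibration. There is no real obstacle here: the entire substantive content, namely that the pushout product of monomorphisms with $\Dd$-cokernels is again a monomorphism with cokernel $F(D_1,\ldots,D_n)$, has already been absorbed into \cref{prop:hovey-gen}, and the only point to verify is that the hypotheses (1)--(3) are being matched up with the correct cotorsion pair, which is immediate from \cref{thm:cot-pairs-and-model-cats}.
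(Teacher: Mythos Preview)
Your proposal is correct and matches the paper's proof essentially line for line: both apply \cref{prop:hovey-gen} to the complete cotorsion pairs $(\Dd_i,\Ee_i\cap\Ac_i)$ supplied by \cref{thm:cot-pairs-and-model-cats}, read off that $\square_F(f_i)$ is a monomorphism with cokernel $F(D_1,\ldots,D_n)\in\Dd_0$, and then invoke \cref{lem:trivial-cofib} together with hypothesis~(3) for the trivial case.
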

				\begin{proof}
					By \cref{thm:cot-pairs-and-model-cats} there are two complete cotorsion pairs on $\Mm_i$, namely $(\Dd_i\cap \Ac_i,\Ee_i)$ and $(\Dd_i,\Ee_i\cap \Ac_i)$, where $\Ac_i$ denotes the class of trivial objects.
					
					Let $f_i$ be cofibrations in $\Mm_i$ for $i>0$, i.e.\@ there are short exact sequences
					\[
						\begin{tikzcd}
							0\ar[r] & A_i\ar[r,"f_i"]& B_i\ar[r] & \underbrace{D_i}_{\in \Dd_i}\ar[r] &0
						\end{tikzcd}\text{ in }\Mm_i
					\]
					By \cref{prop:hovey-gen} applied to the cotorsion pairs $(\Dd_i,\Ee_i\cap \Ac_i)$ the short sequence
					\[
						\begin{tikzcd}
							0\ar[r] & \colim_{\Delta_1^n\backslash\set{(1,\ldots,1)}} F(f_1,\ldots, f_n)\ar[r,"\square_F(f_i)"] & F(B_1,\ldots, B_n)\ar[r] & F(D_1,\ldots, D_n)\ar[r] & 0
						\end{tikzcd}
					\]
					is exact. By assumption $F(D_1,\ldots, D_n)$ is cofibrant, hence $\square_F(f_1,\ldots, f_n)$ is a cofibration in $\Mm_0$. 
					
					Now if at least one of the $f_i$ is trivial, then so is one of the $D_i$ and therefore $F(D_1,\ldots, D_n)$ is trivial. By \cref{lem:trivial-cofib} this implies that it is a trivial cofibration.

				\end{proof}
				The converse of the theorem is also true.
				\begin{prop}\label{prop:hovey-gen-converse}
					Let $F:\Mm_1\times\ldots\times\Mm_n\to \Mm_0$ be an $n$-variable left Quillen functor between abelian model categories. Then
					\begin{enumerate}
						\item the family $\Dd_j$ is $F(D_1,\empind{j},D_n)$-right split for all $D_i\in \Dd_i$ ($i\neq j$) and all $j$.
						\item if $D_i\in \Dd_i$ for all $i>0$, then $F(D_1,\ldots, D_n)\in \Dd_0$.
						\item If additionally at least one of the $D_i$ is acyclic, then so is $F(D_1,\ldots,D_n)$.
					\end{enumerate}
				\end{prop}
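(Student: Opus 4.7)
The plan is to exploit that $F$ is a left adjoint in each variable separately, so it preserves colimits (and in particular the zero object) in each slot; combined with the observation that $0\to D_i$ is a cofibration in $\Mm_i$ whenever $D_i\in\Dd_i$ (and a trivial cofibration when $D_i$ is additionally acyclic, by \cref{lem:trivial-cofib}), this will let us feed carefully chosen cofibrations into the pushout product and read off all three conclusions.

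First, to deduce statements (2) and (3), I would apply $\square_F$ to the family $(0\to D_i)_{i=1,\ldots,n}$. Every vertex of $\Delta_1^n\backslash\set{(1,\ldots,1)}$ has at least one coordinate equal to $0$, so $F$ vanishes on it. Hence
\[
    \colim_{\Delta_1^n\backslash\set{(1,\ldots,1)}} F(0\to D_1,\ldots, 0\to D_n)=0,
\]
and $\square_F(0\to D_1,\ldots, 0\to D_n)$ is simply the morphism $0\to F(D_1,\ldots, D_n)$. Since $F$ is $n$-variable left Quillen this is a cofibration in $\Mm_0$, whence $F(D_1,\ldots, D_n)\in\Dd_0$, giving (2). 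If in addition one of the $D_i$ is acyclic, the corresponding $0\to D_i$ is a trivial cofibration, so $\square_F$ is a trivial cofibration, and another application of \cref{lem:trivial-cofib} identifies $F(D_1,\ldots, D_n)$ as an acyclic cofibrant object, proving (3).

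For statement (1), I would fix $j$, cofibrant objects $D_i\in\Dd_i$ for $i\neq j$, and a short exact sequence $0\to A_j\xrightarrow{f_j} B_j\to D_j\to 0$ with $D_j\in\Dd_j$, and set $f_i:0\to D_i$ for $i\neq j$. Each $f_i$ is then a cofibration in $\Mm_i$, so $\square_F(f_1,\ldots, f_n)$ is a cofibration in $\Mm_0$, in particular a monomorphism. As above, the only vertex of $\Delta_1^n\backslash\set{(1,\ldots,1)}$ at which $F(f_1,\ldots, f_n)$ does not vanish is the one with entry $A_j$ in slot $j$ and $D_i$ elsewhere, so
\[
    \colim_{\Delta_1^n\backslash\set{(1,\ldots,1)}} F(f_1,\ldots, f_n)\cong F(D_1,\ldots, A_j,\ldots, D_n).
\]
Applying \cref{lem:coker-of-pushout-product} identifies the cokernel of $\square_F(f_i)$ with $F(D_1,\ldots, D_j,\ldots, D_n)$, yielding exactly the short exact sequence required by $F$-right splitness.

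The only potentially delicate point is verifying that $F$ vanishes on any tuple containing a zero entry, but this is immediate from $F$ being a left adjoint slot-wise. The proof is thus a direct unpacking of the definition of an $n$-variable left Quillen functor against well-chosen cofibrations built from $0\to D_i$, and I do not expect any serious obstacle.
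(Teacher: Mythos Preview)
Your proposal is correct and follows essentially the same approach as the paper: both feed the cofibrations $0\to D_i$ (together with $f_j$ in slot $j$ for part (1)) into $\square_F$ and identify the resulting pushout product with the desired map by observing that all but one vertex of the punctured cube carries the zero object. The paper is terser for (2) and (3), citing \cref{lem:coker-of-pushout-product} and \cref{lem:trivial-cofib} directly, whereas you spell out the vanishing explicitly; your appeal to \cref{lem:coker-of-pushout-product} in part (1) is in fact unnecessary, since right-exactness of $F(D_1,\ldots,-,\ldots,D_n)$ already identifies the cokernel, but this is harmless.
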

					\begin{proof}
						The second and third claim follow from \cref{lem:coker-of-pushout-product} and \cref{lem:trivial-cofib}.
						
						For the first claim let $j=n$ without loss of generality, and fix cofibrant objects $D_i$ of $\Mm_i$ for each $0<i<n$. Note that $0\to D_i$ is a cofibration. Let $f:A_n\to B_n$ be a monomorphism with cokernel in $\Dd_n$, i.e.\@ a cofibration. By assumption
						\[
							\square_F(0\to D_1,\ldots, 0\to D_{n-1}, f):\colim_{\Delta_1^n\backslash\set{(1,\ldots,1)}} F(0\to D_1,\ldots, 0\to D_{n-1}, f)\to F(D_1,\ldots, D_{n-1},B_n)
						\]
						is a monomorphism. It is easy to see
						\[
							\colim_{\Delta_1^n\backslash\set{(1,\ldots,1)}} F(0\to D_1,\ldots, 0\to D_{n-1}, f) \cong F(D_1,\ldots, D_{n-1},A_n)
						\]
						and 
						\[
							\square_F(0\to D_1,\ldots, 0\to D_{n-1}, f)= F(D_1,\ldots, D_{n-1},f).
						\]
					\end{proof}

		\subsection{A Simplified Criterion for Quillen Adjunctions in multiple variables}
			
				We will now use ideas similar to those in the one variable case to break the condition in \cref{thm:hovey-generalization} down into something that can be checked on the underlying abelian category instead of the category of chain complexes.
				
			\begin{emp}
				For the remainder of this section let $F:\Aa_1\times\ldots\times\Aa_n\to \Aa_0$ be an $n$-variable left adjoint with right adjoints $G^j$, where the $\Aa_i$ are Grothendieck abelian categories with cotorsion pairs $(\Dd_i,\Ee_i)$ which induce model structures on chain complexes.
			\end{emp}

			\begin{prop}\label{prop:n-variable-ext-adjunction}
				Assume the equivalent conditions of \cref{prop:n-split-duality}. Let $D_i^\bu \in \Ch(\Aa_i)$ be complexes such that $D_i^n\in \Dd_i$ for all $n$ and let $E^\bu \in \Ch(\Aa_0)$ be a complex such that $E^n\in \Ee_0$ for all $n$. Then
				\[
					\Ext^1_{\Ch(\Aa_0)}(\Ch(F)(D_1^\bu ,\ldots, D_n^\bu ),E^\bu )\cong \Ext^1_{\Ch(\Aa_j)}(D_j^\bu ,\Ch(G^j)(D_1^\bu ,\skipind{j},D_n^\bu ,E^\bu )).
				\]
			\end{prop}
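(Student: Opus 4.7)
The plan is to adapt the proof of equation \labelcref{eq:ext-adjunction} in \cref{prop:equivalent-conditions-1-variable} to the $n$-variable setting of chain complexes, using the $n$-variable adjunction $\Ch(F)\dashv \Ch(G^j)$ together with the hypothesis (via \cref{prop:n-split-duality}) that applying $F$ or $G^j$ entry-wise still preserves the relevant short exact sequences. Fixing the other variables, the one-variable adjunction
\[
	\Ch(F)(D_1^\bu,\empind{j},D_n^\bu)\dashv \Ch(G^j)(D_1^\bu,\skipind{j},D_n^\bu,-)
\]
reduces the whole question to building a natural isomorphism on $\Ext^1$ for this pair of functors on chain complexes.

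First I would construct the forward map $\Phi$ as follows. Given an extension
\[
	\xi:\quad 0\to E^\bu\to Y^\bu\to \Ch(F)(D_1^\bu,\ldots,D_n^\bu)\to 0,
\]
apply $\Ch(G^j)(D_1^\bu,\skipind{j},D_n^\bu,-)$. As a right adjoint in its last slot, it is left exact and produces an exact sequence
\[
	0\to \Ch(G^j)(D_\bullet^\bu,E^\bu)\to \Ch(G^j)(D_\bullet^\bu,Y^\bu)\to \Ch(G^j)(D_\bullet^\bu,\Ch(F)(D_\bullet^\bu)).
\]
Pulling back along the unit $D_j^\bu\to \Ch(G^j)(D_\bullet^\bu,\Ch(F)(D_\bullet^\bu))$ yields a short exact sequence
\[
	0\to \Ch(G^j)(D_\bullet^\bu,E^\bu)\to P^\bu\to D_j^\bu\to 0,
\]
which represents $\Phi(\xi)$. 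Dually, I build the inverse $\Psi$: given an extension of $D_j^\bu$ by $\Ch(G^j)(D_\bullet^\bu,E^\bu)$, apply $\Ch(F)$ in the $j$-th slot (keeping the others fixed at $D_i^\bu$) and push out along the counit $\Ch(F)(D_\bullet^\bu,\Ch(G^j)(D_\bullet^\bu,E^\bu))\to E^\bu$. That $\Phi$ and $\Psi$ are mutually inverse then follows from the triangle identities of the $(n$-variable$)$ adjunction and the universal properties of pullback and pushout — exactly as in the argument for \labelcref{eq:ext-adjunction}.

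The key technical point, and hence the main obstacle, is the preservation of exactness when applying $\Ch(F)$ or $\Ch(G^j)$ to the chosen extensions. For $\Ch(G^j)$ this is automatic, since it is a right adjoint in the last slot and hence left exact, and the pullback restores the right-most epimorphism. For $\Ch(F)$ in the $j$-th slot the situation is subtler: although $D_j^\bu$ is not assumed to lie in $\tilde{\Dd_j}$ or $\dg\tilde{\Dd_j}$, each degree $D_j^{\alpha_j}$ lies in $\Dd_j$, so condition $(0a_j)$ from \cref{prop:n-split-duality}, combined with the fact that all $D_i^{\alpha_i}\in\Dd_i$, makes $F$ exact in the $j$-th entry at every multi-degree of the relevant bicomplex. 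Taking the sum-total complex then preserves degree-wise short exactness because direct sums are exact in the Grothendieck abelian category $\Aa_0$. Once this degree-wise exactness is in hand, the pushout and pullback constructions produce the required extensions, and the verification that $\Phi$ and $\Psi$ are mutually inverse is a direct diagram chase of Yoneda-style $\Ext^1$ representatives.
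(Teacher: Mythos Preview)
Your overall strategy matches the paper's: construct $\Phi$ by applying $\Ch(G^j)$ and pulling back along the unit, construct $\Psi$ dually via $\Ch(F)$ and pushout along the counit, then verify they are mutually inverse. Your treatment of $\Psi$ is correct. The gap is in $\Phi$.

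The sentence ``the pullback restores the right-most epimorphism'' is where the argument breaks. Left exactness of $\Ch(G^j)(D_1^\bu,\skipind{j},D_n^\bu,-)$ yields only a left exact sequence ending in $\Ch(G^j)(\ldots,\Ch(F)(D_1^\bu,\ldots,D_n^\bu))$, and the pullback $P^\bu\to D_j^\bu$ along the unit $\eta$ is epic precisely when the composite of $\eta$ with the projection to the cokernel of $\Ch(G^j)(\ldots,Y^\bu)\to\Ch(G^j)(\ldots,\Ch(F)(\ldots))$ vanishes. Nothing in the bare adjunction forces this; without it you have not produced a short exact sequence and hence no element of $\Ext^1$.

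The paper closes this gap by proving that $\Ch(G^j)(D_1^\bu,\skipind{j},D_n^\bu,-)$ preserves the entire short exact sequence $0\to E^\bu\to Y^\bu\to \Ch(F)(D_1^\bu,\ldots,D_n^\bu)\to 0$, and this is where the hypotheses on the $G^j$-side enter. Condition $(ja_0)$ of \cref{prop:n-split-duality} (that $\Ee_0$ is $G^j(D_1,\skipind{j},D_n,-)$-left split) gives exactness at each multi-index. Then, because $\Ch(G^j)$ is the \emph{product} total complex and products in a Grothendieck category are not exact in general, one invokes \cref{coro:exactness-of-products}: the entry-wise kernels $G^j(D_1^{\alpha_1},\skipind{j},D_n^{\alpha_n},E^{\alpha_0})$ lie in $\Ee_j$ by condition $(jb)$, so the product of these short exact sequences remains exact. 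Your argument for $\Psi$ via exactness of direct sums does not dualize here, and this asymmetry between sum- and product-totalization is exactly what you are missing.
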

				\begin{proof}
					Let 
					\[
						0\to E^\bu \to X^\bu\to \Ch(F)(D_1^\bu ,\ldots, D_n^\bu )\to 0
					\]
					be a short exact sequence of complexes. We apply $\Ch(G^j)(D_1^\bu ,\skipind j,D_n^\bu ,-)$ to find
					\[\adjustbox{max width=\textwidth, center}{
						\begin{tikzcd}[column sep = small]
						0\ar[r] &  \Ch(G^j)(D_1^\bu,\skipind{j},D_n^\bu,E^\bu) \ar[r] &  \Ch(G^j)(D_1^\bu,\skipind{j},D_n^\bu,X^\bu)\ar[r] &  \Ch(G^j)(D_1^\bu,\skipind{j},D_n^\bu,\Ch(F)(D_1^\bu,\ldots, D_n^\bu)) \ar[r,"?"] & 0\\
						0\ar[r] &  \Ch(G^j)(D_1^\bu,\skipind{j},D_n^\bu,E^\bu)\ar[r]\ar[u,equal] & \blacksquare \ar[r]\ar[u] & D_j^\bu\ar[r,"?"]\ar[u,"\eta"] & 0
						\end{tikzcd}}
					\]
					where the square denotes the pullback and $\eta$ the unit. We have to show exactness for the top row. Indeed then also the bottom row will be exact, and hence this construction gives an element of $\Ext^1_{\Ch(\Aa_j)}(D_j^\bu ,\Ch(G^j)(D_1^\bu ,\skipind{j},D_n^\bu ,E^\bu ))$. The dual construction will then give an inverse. \\
					Consider the sequence of \multi{complexes}
					\[\adjustbox{max width=\textwidth, center}{
						$0\to G^j(D_1^{\alpha_1},\skipind{j},D_n^{\alpha_n},E^{\alpha_0})\to G^j(D_1^{\alpha_1},\skipind{j},D_n^{\alpha_n},X^{\alpha_0})\to G^j(D_1^{\alpha_1},\skipind{j},D_n^{\alpha_n},\Ch(F)(D_1,\ldots, D_n)^{\alpha_0})\to 0$
						}
					\]
					with $\alpha=(\alpha_1,\skipind{j},\alpha_n,\alpha_0)\in \Z^n$.
					
					Since for all $\alpha$ the family $\Ee_0$ is left split for $G^j(D_1^{\alpha_1},\skipind{j},D_n^{\alpha_n},-)$ this is an exact sequence of \multi{complexes}. Also by assumption $G^j(D_1^{\alpha_1},\skipind{j},D_n^{\alpha_n},E^{\alpha_0})\in \Ee_0$, so we can apply \cref{coro:exactness-of-products} and after taking the product total complex it stays exact, as desired. 

				\end{proof}
			Recall the notation introduced in \cref{defi:sphere-and-disc}.
			\begin{lemma}\label{lem:tilde-criterion}
				Let $\Aa$ be an abelian category with a cotorsion pair $(\Dd,\Ee)$ such that every object of $\Aa$ is a quotient of an object of $\Dd$ and a subobject of an object of $\Ee$. 
				\begin{enumerate}
					\item 
						It is $D^\bu\in \tilde{\Dd}$ if and only if $\Ext^1_{\Ch(\Aa)} (D^\bu,S^n(E))=0$ for all $n\in \Z$ and $E\in \Ee$.
					\item
						It is $E^\bu\in \tilde{\Ee}$ if and only if $\Ext^1_{\Ch(\Aa)} (S^n(D),E^\bu)=0$ for all $n\in \Z$ and $D\in \Dd$.
				\end{enumerate}
			\end{lemma}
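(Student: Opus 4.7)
I focus on (1); statement (2) follows by applying (1) in $\Aa^{op}$, which swaps the roles of $\Dd,\Ee$ and, after reversing the grading, converts $S^{-n}(D)$ into $S^n(D)$.

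For the forward direction, the plan is to establish $S^n(E)\in\dg\tilde\Ee$ whenever $E\in\Ee$ and then invoke the cotorsion pair $(\tilde\Dd,\dg\tilde\Ee)$ of \cref{prop:enough-tilde-objects}. Every component of $S^n(E)$ lies in $\Ee$ since $0\in\Ee$. A chain map $D^\bu\to S^n(E)$ from $D^\bu\in\tilde\Dd$ is determined by a morphism $\bar f\colon Z^{n+1}D^\bu\to E$, and a null-homotopy corresponds to extending $\bar f$ along the inclusion $Z^{n+1}D^\bu\hookrightarrow D^{n+1}$; the cokernel is $Z^{n+2}D^\bu\in\Dd$, so $\Ext^1_\Aa(Z^{n+2}D^\bu,E)=0$ and the extension exists.

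For the converse, the main device is the short exact sequence $0\to S^{n+1}(E)\to D^n(E)\to S^n(E)\to 0$ in $\Ch(\Aa)$ together with the natural identifications
\[
	\Hom_{\Ch(\Aa)}(X^\bu,D^n(E))\cong\Hom_\Aa(X^{n+1},E),\qquad \Ext^1_{\Ch(\Aa)}(X^\bu,D^n(E))\cong\Ext^1_\Aa(X^{n+1},E).
\]
The second isomorphism is obtained by embedding $E$ into an injective $I\in\Aa$ (using that $\Aa$ is Grothendieck) and noting that $D^n(I)$ is injective in $\Ch(\Aa)$ because $(-)^{n+1}\colon\Ch(\Aa)\to\Aa$ is exact. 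Feeding $X^\bu=D^\bu$ into the induced long exact sequence of $\Hom_{\Ch(\Aa)}(D^\bu,-)$ and using the assumed $\Ext^1_{\Ch(\Aa)}(D^\bu,S^k(E))=0$ for all $k$, one obtains $\Ext^1_\Aa(D^{n+1},E)=0$ for every $n$ and every $E\in\Ee$, so every degree $D^k\in\Dd$.

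The same long exact sequence also yields a surjection $\Hom_\Aa(D^{n+1},E)\twoheadrightarrow\Hom_\Aa(D^n/B^nD^\bu,E)$, which sends $g$ to $\pi(x)\mapsto g(d^nx)$. Unwinding this, its image is precisely the morphisms $D^n/B^nD^\bu\to E$ that factor through the surjection $D^n/B^nD^\bu\twoheadrightarrow D^n/Z^nD^\bu$, i.e.\@ that vanish on $H^nD^\bu:=Z^nD^\bu/B^nD^\bu$. Embedding $H^nD^\bu$ into an injective $I\in\Ee$ and extending to a morphism $D^n/B^nD^\bu\to I$ by injectivity of $I$ then forces $H^nD^\bu=0$, so $D^\bu$ is exact. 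With exactness in hand, $D^n/B^nD^\bu\cong Z^{n+1}D^\bu$, and the displayed surjection becomes the restriction $\Hom_\Aa(D^{n+1},E)\twoheadrightarrow\Hom_\Aa(Z^{n+1}D^\bu,E)$; combined with the long exact sequence of $0\to Z^{n+1}D^\bu\to D^{n+1}\to Z^{n+2}D^\bu\to 0$ and $D^{n+1}\in\Dd$, this yields $\Ext^1_\Aa(Z^{n+2}D^\bu,E)=0$, so every cycle lies in $\Dd$. The main technical obstacle is the identification of the image of the induced map through the intermediate subobject $B^{n+1}D^\bu\subset D^{n+1}$, which is what lets the Grothendieck hypothesis convert the vanishing on spheres into exactness of $D^\bu$.
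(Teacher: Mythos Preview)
Your overall strategy matches the paper's: the forward direction uses $S^n(E)\in\dg\tilde\Ee$ together with the cotorsion pair $(\tilde\Dd,\dg\tilde\Ee)$ from \cref{prop:enough-tilde-objects}, and the converse exploits the short exact sequence $0\to S^{n+1}(E)\to D^n(E)\to S^n(E)\to 0$. There is, however, a genuine gap: you twice invoke that $\Aa$ is Grothendieck, but the lemma as stated only assumes that every object embeds into an object of $\Ee$, not into an injective. In your exactness argument you embed $H^nD^\bu$ into an injective $I$ and then extend over $D^n/B^nD^\bu$ \emph{by injectivity of $I$}; without enough injectives this extension need not exist. The paper's remedy is cleaner and uses only the stated hypothesis: embed $D^n/B^nD^\bu$ itself into some $E\in\Ee$. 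The surjection $\Hom_\Aa(D^{n+1},E)\twoheadrightarrow\Hom_\Aa(D^n/B^nD^\bu,E)$ that you already established then shows this monomorphism factors through the map $D^n/B^nD^\bu\to D^{n+1}$ induced by $d^n$; hence that map is a monomorphism and $H^nD^\bu=0$. Likewise, the identification $\Ext^1_{\Ch(\Aa)}(X^\bu,D^n(E))\cong\Ext^1_\Aa(X^{n+1},E)$ does not need injectives: it follows directly from the adjunction $(-)^{n+1}\dashv D^n$ between \emph{exact} functors, via the usual pullback/pushout description of Yoneda $\Ext^1$.

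Once the Grothendieck crutch is removed, your route differs from the paper's in one further respect. You first prove $D^k\in\Dd$ for all $k$, then feed this together with the surjection $\Hom_\Aa(D^{n+1},E)\twoheadrightarrow\Hom_\Aa(Z^{n+1}D^\bu,E)$ into the long exact sequence of $0\to Z^{n+1}D^\bu\to D^{n+1}\to Z^{n+2}D^\bu\to 0$ to obtain $Z^kD^\bu\in\Dd$. The paper instead quotes Gillespie's identification $\Ext^1_\Aa(Z^{n+1}X^\bu,E)\cong\Ext^1_{\Ch(\Aa)}(X^\bu,S^nE)$ for exact $X^\bu$ and reads off $Z^kD^\bu\in\Dd$ in one line, never needing that the entries $D^k$ lie in $\Dd$. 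Your detour is longer but self-contained; the paper's is shorter but leans on the external reference.
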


				\begin{proof}
					We only prove the first statement. 
					By \cref{lem:bounded-are-dg} $S^n(E)$ is a $\dg\tilde{\Ee}$-complex and by \cref{prop:enough-tilde-objects} $(\tilde{\Dd},\dg\tilde{\Ee})$ is a cotorsion pair. So for $D^\bu\in \tilde{\Dd}$ it is $\Ext^1_{\Ch(\Aa)} (D^\bu,S^n(E))=0$.
					
					Let $X^\bu$ be a complex such that $\Ext^1_{\Ch(\Aa)} (X^\bu,S^n(E))=0$ for all $n\in\Z$ and $E\in \Ee$. We first show that $X^\bu$ is exact. For any $E\in \Ee$ consider the exact sequence
					\[
						0\to S^{n+1}E\to D^nE\to S^nE\to 0
					\]
					and apply $\Hom(X^\bu,-)$ to it to find
					\[
						\Hom(X^\bu,D^nE)\to \Hom(X^\bu,S^nE)\to \underbrace{\Ext^1(X^\bu,S^{n+1}E)}_{=0}.
					\]
					Hence any morphism $X^\bu\to S^nE$ lifts to a morphism $X^\bu\to D^nE$. It is easy to see that a morphism $X^\bu\to S^nE$ is the same as a morphism $X^n/B^nX^\bu\to E$ in $\Aa$ and a morphism $X^\bu\to D^nE$ is the same as a morphism $X^{n+1}\to E$ in $\Aa$.
					
					Now let $X^n/B^nX^\bu\to E$ be a monomorphism for some $E\in \Ee$. Then we have just seen that this lifts to a commutative triangle
					\[
						\begin{tikzcd}
							X^n/B^nX^\bu\ar[r]\ar[d] & E\\
							X^{n+1}\ar[ru]
						\end{tikzcd}
					\]
					and we conclude that $X^n/B^nX^\bu\to X^{n+1}$ is a monomorphism, and therefore $B^nX^\bu=Z^nX^\bu$, i.e.\@ $X^\bu$ is exact at $n$.
					
					In particular it is $Z^{n+1}X\cong X^n/B^nX^\bu$. Then by \cite[Lemma 4.2]{Gillespie08} it is
					\[
						\Ext^1_\Aa(X^n/B^nX^\bu,E)\cong \Ext^1_{\Ch(\Aa)}(X^\bu,S^nE)=0
					\]
					for all $E\in \Ee$, hence $Z^{n+1}X^\bu\in \Dd$.
				\end{proof}
				
			We can now use these tools to prove the main result of this section. We will show that from the assumptions in \cref{prop:n-split-duality} one can infer the conditions of \cref{thm:hovey-generalization}. 
			
			\begin{thm}\label{thm:cot-main}
				We assume the equivalent conditions of \cref{prop:n-split-duality}, e.g.
				\begin{enumerate}
					\item[$(0a_j)$] $\Dd_j$ is $F(D_1,\ldots, \overset{j}{-},\ldots, D_n)$-right split for all $D_i\in \Dd_i$ ($i\neq j$) and all $j$.
					\item[$(0b)$] $F(\Dd_1,\ldots, \Dd_n)\subset \Dd_0$.
				\end{enumerate}
				Then $\Ch(F)$ is an $n$-variable left Quillen functor.
			\end{thm}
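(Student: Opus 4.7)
The plan is to verify the three hypotheses of \cref{thm:hovey-generalization} for $\Ch(F)$ equipped with the abelian model structures on $\Ch(\Aa_i)$ whose cofibrant and trivially cofibrant classes are $\dg\tilde{\Dd}_i$ and $\tilde{\Dd}_i$, respectively. Explicitly, I need: (i) $\dg\tilde{\Dd}_j$ is $\Ch(F)(D_1^\bu,\empind{j},D_n^\bu)$-right split whenever $D_i^\bu\in\dg\tilde{\Dd}_i$; (ii) $\Ch(F)(\dg\tilde{\Dd}_1,\ldots,\dg\tilde{\Dd}_n)\subset\dg\tilde{\Dd}_0$; and (iii) if some input $D_j^\bu$ additionally lies in $\tilde{\Dd}_j$, then $\Ch(F)(D^\bu)\in\tilde{\Dd}_0$. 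Statement (i) is immediate: condition $(0a_j)$ applies bidegree-wise since every entry in sight lies in the appropriate $\Dd_i$, and direct-sum totalization preserves short exact sequences.

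For (ii), the entrywise condition $\Ch(F)(D^\bu)^k\in\Dd_0$ follows from $(0b)$ and the direct-sum closure of $\Dd_0$ (\cref{coro:exactness-of-products}). The remaining content is to show $\Ext^1_{\Ch(\Aa_0)}(\Ch(F)(D^\bu),E^\bu)=0$ for every $E^\bu\in\tilde{\Ee}_0$, which I would establish by an iterated sphere reduction. At step $k$, \cref{prop:n-variable-ext-adjunction} applied at slot $k$ rewrites the Ext as $\Ext^1(D_k^\bu,\Ch(G^k)(\ldots,E^\bu))$; since $D_k^\bu\in\dg\tilde{\Dd}_k$ and $(\dg\tilde{\Dd}_k,\tilde{\Ee}_k)$ is a cotorsion pair, this vanishes iff $\Ch(G^k)(\ldots,E^\bu)\in\tilde{\Ee}_k$, which by the second part of \cref{lem:tilde-criterion} is tested by Ext-vanishing against sphere complexes $S^m(D_k')$ with $D_k'\in\Dd_k$, and one more application of \cref{prop:n-variable-ext-adjunction} turns that test back into an Ext for $\Ch(F)$ with slot $k$ now occupied by $S^m(D_k')$. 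Iterating for $k=1,\ldots,n$ eventually leaves $\Ext^1(\Ch(F)(S^{m_1}(D_1'),\ldots,S^{m_n}(D_n')),E^\bu)$; the first entry collapses by inspection to the sphere $S^{m_1+\cdots+m_n}(F(D_1',\ldots,D_n'))$, and since $F(D_1',\ldots,D_n')\in\Dd_0$ by $(0b)$, the second part of \cref{lem:tilde-criterion} applied to $E^\bu\in\tilde{\Ee}_0$ gives the final vanishing.

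For (iii), I would adapt the same strategy. By the first part of \cref{lem:tilde-criterion} it suffices to show $\Ext^1(\Ch(F)(D^\bu),S^m(E))=0$ for all $m$ and $E\in\Ee_0$; I would apply the iterated sphere reduction only at the $n-1$ slots $k\neq j$, leaving the $\tilde{\Dd}_j$-slot untouched. After those reductions the first entry of the Ext is (a shift of) the complex obtained by applying the single-variable functor $F(D_1',\empind{j},D_n')$ termwise to $D_j^\bu$, and since $D_j^\bu\in\tilde{\Dd}_j$ is exact with cycles in $\Dd_j$, condition $(0a_j)$ ensures that $F(D_1',\empind{j},D_n')$ preserves the short exact sequences $0\to Z^kD_j^\bu\to D_j^k\to Z^{k+1}D_j^\bu\to 0$, making the resulting complex exact with cycles in $\Dd_0$, i.e., in $\tilde{\Dd}_0$; since $S^m(E)\in\dg\tilde{\Ee}_0$ by \cref{lem:bounded-are-dg}, the Ext vanishes by the cotorsion pair $(\tilde{\Dd}_0,\dg\tilde{\Ee}_0)$. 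The main technical subtlety throughout is the bookkeeping for the iterated sphere reduction: every intermediate application of \cref{prop:n-variable-ext-adjunction} requires the $F$-inputs to have entries in the respective $\Dd_i$, which is preserved since both $\dg\tilde{\Dd}_i$-complexes and sphere complexes on $\Dd_i$-objects satisfy this condition.
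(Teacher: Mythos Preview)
Your proof is correct and uses the same scaffolding as the paper (verify the three hypotheses of \cref{thm:hovey-generalization}, with \cref{prop:n-variable-ext-adjunction} and \cref{lem:tilde-criterion} as the main tools), but the route you take for conditions (ii) and (iii) is genuinely different. The paper argues by induction on $n$: after one application of \cref{prop:n-variable-ext-adjunction} at slot $1$, it recognises $\Ch(G^1)(D_2^\bu,\ldots,D_n^\bu,S^mE)$ as (a shift of) $\Ch(G^1(-,E))$ applied to $(D_2^\bu,\ldots,D_n^\bu)$, observes via \cref{emp:restriction-of-adjoint} that $G^{1,op}(-,E)$ is an $(n-1)$-variable left adjoint satisfying the dual hypotheses from \cref{prop:n-split-duality}, and invokes the induction hypothesis together with \cref{prop:equivalent-formulation-for-quillen-adjunction}. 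Your iterated sphere reduction is essentially this induction unrolled: rather than passing to an $(n-1)$-variable adjoint and appealing to a smaller instance of the theorem, you repeatedly bounce back and forth through \cref{prop:n-variable-ext-adjunction} and \cref{lem:tilde-criterion} until every slot except possibly $j$ is a sphere, at which point the computation becomes the $1$-variable argument of \cref{thm:criterion-for-quillen-adjunction}. Your approach buys concreteness and avoids having to check that the restricted functors $G^{1,op}(-,E)$ again satisfy the theorem's hypotheses (with the opposite-category cotorsion pair on $\Aa_1^{op}$); the paper's approach is shorter to state once the induction machinery is in place. One minor phrasing point: your ``vanishes iff $\Ch(G^k)(\ldots)\in\tilde{\Ee}_k$'' is literally an ``if'', but since you are proving vanishing for all $D_k^\bu\in\dg\tilde{\Dd}_k$ simultaneously and $\tilde{\Ee}_k=(\dg\tilde{\Dd}_k)^\perp$, the equivalence does hold at that level of generality, so the logic is sound.
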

			
				\begin{proof}
					We check the conditions of \cref{thm:hovey-generalization}, which in this special case are:
					\begin{enumerate}
						\item
							The family $\dg\tilde{\Dd}_j$ is $\Ch(F)(D_1^\bu,\empind{j},D_n^\bu)$-right split for all $D_i^\bu\in \dg\tilde{\Dd}_i$ ($i\neq j$) for all $j$.
						\item
							If $D_i^\bu\in \dg\tilde{\Dd}_i$ for $i>0$ then $\Ch(F)(D_1^\bu,\ldots,D_n^\bu)\in \dg\tilde{\Dd}_0$.
						\item
							If additionally at least one of the $D_i^\bu$ is exact (i.e.\@ $D_i^\bu\in \tilde{\Dd}_i$), then $\Ch(F)(D_1^\bu,\ldots, D_n^\bu)$ is exact as well (i.e.\@ $\Ch(F)(D_1^\bu,\ldots, D_n^\bu)\in \tilde{\Dd}_0$).
					\end{enumerate}
					The first one can be shown directly. For the second and third we argue by induction. The case $n=1$ is \cref{thm:criterion-for-quillen-adjunction}.
					
					We begin with the first condition. Without loss of generality let $j=n$. Let $D_i^\bu\in \dg\tilde{\Dd}_i$ for $i=1,\ldots, n$, and let
					\[
						0\to A_n^\bu\to B_n^\bu\to D_n^\bu\to 0
					\]
					be a short exact sequence. Then by assumption for each $\alpha=(\alpha_1,\ldots, \alpha_n)\in \Z^n$
					\[
						0\to F(D_1^{\alpha_1},\ldots,D_{n-1}^{\alpha_{n-1}},A_n^{\alpha_n})
						\to F(D_1^{\alpha_1},\ldots,D_{n-1}^{\alpha_{n-1}},B_n^{\alpha_n})
						\to F(D_1^{\alpha_1},\ldots,D_n^{\alpha_n})
						\to 0
					\]
					is exact in $\Aa_0$ and $F(D_1^{\alpha_1},\ldots, D_n^{\alpha_n})\in \Dd_0$. By \cref{coro:exactness-of-products} 
					\[
						0\to \Ch(F)(D_1^\bu,\ldots, D_{n-1}^\bu, A_n^\bu)
						\to \Ch(F)(D_1^\bu,\ldots, D_{n-1}^\bu, B_n^\bu)
						\to \Ch(F)(D_1^\bu,\ldots, D_n^\bu)
						\to 0
					\]
					is exact as well, which concludes the argument for the first point.
					
					We now assume that the result has been established for all $m<n$.

					We start by showing the third condition. Without loss of generality let $D_n^\bu\in \tilde{\Dd}_n$.
					We have to show $\Ch(F)(D_1^\bu,\ldots, D_n^\bu)\in \tilde{\Dd}_0$, i.e. by \cref{lem:tilde-criterion} we need to prove
					\[
						\Ext^1(\Ch(F)(D_1^\bu,\ldots, D_n^\bu),S^mE)=0 \text{ for all } E\in \Ee_0\text{ and }m\in \Z.
					\]
					By \cref{coro:exactness-of-products} for each $k$ it is $\Ch(F)(D_1^\bu,\ldots, D_n^\bu)^k\in \Dd_0$, hence \cref{prop:n-variable-ext-adjunction} applies, so
					\[
						\Ext^1(\Ch(F)(D_1^\bu,\ldots, D_n^\bu),S^mE)\cong \Ext^1(D_1,\Ch(G^1)(D_2^\bu,\ldots, D_n^\bu,S^mE)).
					\]
					This reduces the statement to showing that $\Ch(G^1)(D_2^\bu,\ldots, D_n^\bu,S^mE)\in \tilde{\Ee}_1$. However \newline $\Ch(G^1)(D_2^\bu,\ldots, D_n^\bu,S^mE)$ is, up to a shift, just the functor induced by
					\[
						G^1(-,E):\Aa_2^{op}\times\ldots\times \Aa_n^{op}\to \Aa_0.
					\]
					So by \cref{prop:equivalent-formulation-for-quillen-adjunction} and the induction hypothesis $\Ch(G^1)(D_2^\bu,\ldots, D_n^\bu,S^0E)\in \tilde{\Ee}_1$ and after the shift $\Ch(G^1)(D_2^\bu,\ldots, D_n^\bu,S^mE)\in \tilde{\Ee}_1$. This concludes the proof of the third condition.

					Finally, for the second condition let $D_i^\bu\in \dg\tilde{\Dd}_i$ for all $i$ and $E^\bu\in \tilde{\Ee}_0$. It is
					\[
						\Ext^1(\Ch(F)(D_1^\bu,\ldots, D_n^\bu),E^\bu)\cong \Ext^1(D_1,\Ch(G^1)(D_2^\bu,\ldots, D_n^\bu,E^\bu)),
					\]
					hence it remains to show that $\Ch(G^1)(D_2^\bu,\ldots, D_{n}^\bu,E^\bu)\in \tilde{\Ee}_1$. The same argument as above can be used.
				\end{proof}

\section{Applications}\label{sec:applications}
	
	In this section we show how one can use the results of this paper to show that the flat model structure makes the usual four functor formalism into a Quillen four functor formalism. More precisely let $f:(X,\Oo)\to (Y,\Oo')$ be a morphism of ringed sites. Then the flat model structure makes both adjunctions, $f^*\dashv f_*$ and $\otimes\dashv \Shom$, into Quillen adjunctions. For ringed spaces this is originally due to Gillespie.
	
	This gives a proof of the existence of $\otimes^L$ and $R\Shom$, which does not rely on triangulated categories. In particular we do not want to use $\Tor$-functors during the construction. We assume sites to be small.
	
	\begin{prop}
		Let $(X,\Oo)$ be a ringed site. Denote by $\Ff$ the class of flat $\Oo$-modules. Then $(\Ff,\Ff^{\perp})$ on $\Sh(X,\Oo)$ induces a model structure (see \cref{emp:induces-model-structure}).
	\end{prop}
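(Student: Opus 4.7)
The plan is to apply \cref{thm:assumptions-give-model-cat} to the cotorsion pair $(\Ff,\Ff^\perp)$ on $\Sh(X,\Oo)$. First, $\Sh(X,\Oo)$ is Grothendieck abelian because the site is assumed small. Second, the references already cited in Section~2 (namely \cite[Section~4]{Gillespie06} for ringed spaces and \cite[Section~2.5]{Recktenwald2019} for small ringed sites) supply the remaining inputs: $(\Ff,\Ff^\perp)$ is complete, hereditary, and cogenerated by a set, and every object of $\Sh(X,\Oo)$ is a quotient of a flat sheaf (one can use direct sums of the flat \enquote{extension by zero} sheaves $j_{U!}\Oo|_U$ indexed by objects $U$ of the site).

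Feeding these into \cref{thm:assumptions-give-model-cat} immediately yields the compatibility identities $\tilde{\Ff}=\dg\tilde{\Ff}\cap \Ac$ and $\tilde{\Ff^\perp}=\dg\tilde{\Ff^\perp}\cap \Ac$, together with the completeness of $(\dg\tilde{\Ff},\tilde{\Ff^\perp})$. Per \cref{emp:induces-model-structure} it remains only to show that the companion pair $(\tilde{\Ff},\dg\tilde{\Ff^\perp})$ is also a complete cotorsion pair. The last clause of \cref{thm:assumptions-give-model-cat} says it is enough to produce enough injectives for this pair, after which enough projectives come for free.

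To get enough injectives I would exhibit a set $\Ss'\subset \tilde{\Ff}$ whose orthogonal in $\Ch(\Sh(X,\Oo))$ is exactly $\dg\tilde{\Ff^\perp}$ and then apply the small object argument in the Grothendieck category $\Ch(\Sh(X,\Oo))$. A natural candidate is built from a cogenerating set $\Ss\subset \Ff$ of flat cogenerators via the sphere complex construction of \cref{defi:sphere-and-disc}, using the characterization in \cref{lem:tilde-criterion} to identify the orthogonal class. This is the strategy carried out by Gillespie in \cite[Section~4]{Gillespie06} for ringed spaces; the argument goes through unchanged in the site-theoretic setting once the flat cotorsion pair on $\Sh(X,\Oo)$ is available.

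The main obstacle is exactly this last step: establishing the deconstructibility of $\tilde{\Ff}$ in $\Ch(\Sh(X,\Oo))$, i.e.\@ upgrading the set-cogeneration of $\Ff$ on the abelian level to a set-cogeneration of $\tilde{\Ff}$ on the level of chain complexes whose orthogonal picks out precisely $\dg\tilde{\Ff^\perp}$. One must check that flat sheaves on a small site are still small enough for a transfinite filtration argument to terminate at the chosen set of generators. Once this is handled, the small object argument supplies enough injectives for $(\tilde{\Ff},\dg\tilde{\Ff^\perp})$, \cref{thm:assumptions-give-model-cat} supplies enough projectives, and together with the compatibilities already noted this shows that $(\Ff,\Ff^\perp)$ induces a model structure on $\Ch(\Sh(X,\Oo))$ in the sense of \cref{emp:induces-model-structure}.
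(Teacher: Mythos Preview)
The paper's own proof is nothing more than a citation to \cite{Gillespie06} and \cite[Section~2.5]{Recktenwald2019}, so your sketch is already far more detailed than what appears there. Your overall architecture---feed the flat cotorsion pair into \cref{thm:assumptions-give-model-cat} and isolate the one remaining obligation, enough injectives for $(\tilde{\Ff},\dg\tilde{\Ff^\perp})$---is exactly right and matches Gillespie's strategy.

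There is, however, a concrete slip in the candidate you propose for the cogenerating set. Sphere complexes $S^n(D)$ are not exact, so they do not lie in $\tilde{\Ff}$ at all; and \cref{lem:tilde-criterion} tells you that $\{S^n(D):D\in\Dd,\ n\in\Z\}^\perp$ is $\tilde{\Ee}$, not $\dg\tilde{\Ee}$. In other words, the sphere-complex route addresses the pair $(\dg\tilde{\Ff},\tilde{\Ff^\perp})$, which is already handled by \cref{prop:enough-inj-crit}, not the pair $(\tilde{\Ff},\dg\tilde{\Ff^\perp})$ you still need.

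Your final paragraph does name the correct obstacle: one must show that $\tilde{\Ff}$ itself is deconstructible, i.e.\ that every exact complex with flat cycles is a transfinite extension of complexes drawn from a fixed \emph{set} of such complexes. This is a Kaplansky-type filtration argument on unbounded complexes, not a sphere-complex computation, and it is precisely the content supplied by the cited references. Once you have that set, Eklof--Trlifaj/small-object gives enough injectives for $(\tilde{\Ff},\dg\tilde{\Ff^\perp})$ and the rest of your outline goes through.
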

		\begin{proof}
			See for the \cite{Gillespie06} for the case of ringed topological spaces and \cite[Section 2.5]{Recktenwald2019} for the generalization to (small) ringed sites (not necessarily with enough points).
		\end{proof}
	Elements in $\Cc:=\Ff^\perp$ are called \emph{cotorsion}. The induced model category structure is called the \emph{flat model structure}. 
	
	From now on categories of chain complexes of sheaves are assumed to be equipped with the flat model structure.
	\begin{lemma}\label{lem:flat-is-tensor-x-split}
		Let $(X,\Oo)$ be a ringed site, $F$ a flat $\Oo$-module and $X$ any $\Oo$-module. If
		\[
			0\to A\to B\to F\to 0
		\]
		is exact, then so is
		\[
			0\to X\otimes A\to X\otimes B\to X\otimes F\to 0.
		\]
	\end{lemma}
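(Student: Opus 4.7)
The plan is to apply the Snake Lemma after resolving $X$ by a flat object. Since the flat cotorsion pair $(\Ff,\Cc)$ on $\Sh(X,\Oo)$ is complete, we can choose a short exact sequence $0\to K\to P\to X\to 0$ with $P$ flat (and $K$ cotorsion, though the latter is not needed). Tensoring this sequence against the given $0\to A\to B\to F\to 0$ produces a commutative $3\times 3$ grid whose rows are tensor products with $K$, $P$, $X$ and whose columns are tensor products with $A$, $B$, $F$.

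Two distinguished lines in this grid are automatically short exact: the middle row $0\to P\otimes A\to P\otimes B\to P\otimes F\to 0$ by flatness of $P$, and the right column $0\to K\otimes F\to P\otimes F\to X\otimes F\to 0$ by flatness of $F$ (together with symmetry of the tensor product). All remaining rows and columns are right exact by right exactness of $\otimes$.

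Now I would apply the Snake Lemma to the top row $K\otimes A\to K\otimes B\to K\otimes F\to 0$ and the middle row $0\to P\otimes A\to P\otimes B\to P\otimes F\to 0$, with vertical maps induced by the inclusion $K\to P$. The three cokernels compute to $X\otimes A$, $X\otimes B$, $X\otimes F$ by right exactness of $-\otimes A$, $-\otimes B$, $-\otimes F$ on $0\to K\to P\to X\to 0$, and the kernel $\ker(K\otimes F\to P\otimes F)$ vanishes by flatness of $F$. The snake sequence therefore contains
\[
0\to X\otimes A\to X\otimes B\to X\otimes F,
\]
which forces $X\otimes A\to X\otimes B$ to be injective. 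Combined with the right exactness $X\otimes A\to X\otimes B\to X\otimes F\to 0$, this yields the desired short exact sequence.

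The main point is simply choosing the right tool: once the flat cover $P\to X$ is fixed and the grid is set up, the Snake Lemma does all the work. Beyond completeness of $(\Ff,\Cc)$ and standard homological algebra in the Grothendieck abelian category $\Sh(X,\Oo)$, no further machinery is needed; in particular, the argument avoids any reference to $\Tor$-functors.
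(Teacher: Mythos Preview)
Your proof is correct and essentially identical to the paper's: both resolve $X$ by a flat module, set up the $3\times 3$ tensor grid, use flatness of $F$ for the right column and flatness of the resolving object for the middle row, and conclude via the Snake Lemma. Your write-up is slightly more explicit about which maps the snake is applied to and why the relevant kernel vanishes, but the argument is the same.
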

		\begin{proof}
			Let $0\to K\to G\to X\to 0$ be a short exact sequence where $G$ is flat. Then the result follows by applying the snake lemma to
			\[
				\begin{tikzcd}
					&&& 0\ar[d] \\
					& K\otimes A\ar[r]\ar[d] & K\otimes B \ar[r]\ar[d] & K\otimes F\ar[r]\ar[d] & 0 \\
					0 \ar[r] & G\otimes A\ar[r]\ar[d] & G\otimes B \ar[r]\ar[d] & G\otimes F\ar[r]\ar[d] & 0\\
					&X\otimes A\ar[r]\ar[d] & X\otimes B\ar[r]\ar[d] & X\otimes F\ar[d]\ar[r] & 0\\
					& 0 & 0 & 0
				\end{tikzcd}
			\]
		\end{proof}
	\begin{thm}\label{thm:pullback-pushforward-quillen}
		Let $f:(X,\Oo)\to (Y,\Oo')$ be a morphism of ringed sites. Then 
		\[
			\Ch(f^*):\Ch(\Sh(Y,\Oo'))\rightleftarrows \Ch(\Sh(X,\Oo)):\Ch(f_*)
		\]
		is a Quillen adjunction.
	\end{thm}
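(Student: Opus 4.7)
The plan is to invoke \cref{thm:criterion-for-quillen-adjunction} applied to the adjunction $f^*\dashv f_*$ between $\Sh(Y,\Oo')$ and $\Sh(X,\Oo)$, with the flat cotorsion pair $(\Ff,\Cc)$ on each side. By the previous proposition this pair induces the flat model structure on both categories, so the only thing to verify is one of the two equivalent families of conditions in the statement of \cref{thm:criterion-for-quillen-adjunction}; the most convenient is the pair $(1a)$ and $(1b)$ for $F=f^*$.

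For $(1b)$, I would recall the standard presentation $f^* = (-)\otimes_{f^{-1}\Oo'}\Oo\circ f^{-1}$. The inverse image functor $f^{-1}$ is exact and preserves flatness (a flat $\Oo'$-module $F$ is sent to a flat $f^{-1}\Oo'$-module $f^{-1}F$; in the small ringed site setting this is proved in \cite[Section 2.5]{Recktenwald2019}). Extension of scalars along the ring morphism $f^{-1}\Oo'\to \Oo$ then preserves flatness by the usual base-change argument, giving $f^*F\in \Ff$.

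For $(1a)$, start with a short exact sequence $0\to A\to B\to F\to 0$ in $\Sh(Y,\Oo')$ with $F\in\Ff$. Apply the exact functor $f^{-1}$ to obtain an exact sequence
\[
0\to f^{-1}A\to f^{-1}B\to f^{-1}F\to 0
\]
of $f^{-1}\Oo'$-modules, where $f^{-1}F$ is flat by the above. Now apply \cref{lem:flat-is-tensor-x-split} over the ringed site $(X,f^{-1}\Oo')$ with $\Oo$ playing the role of the module $X$ there (using that $\Oo$ is an $f^{-1}\Oo'$-algebra, hence an $f^{-1}\Oo'$-module): the sequence remains exact after tensoring with $\Oo$ over $f^{-1}\Oo'$, which is precisely the desired exactness of $0\to f^*A\to f^*B\to f^*F\to 0$.

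The substantive step here is the preservation of flatness by $f^{-1}$ on a site that need not have enough points — the remaining manipulations are formal. Given that this preservation is already recorded in the cited references, the application of \cref{thm:criterion-for-quillen-adjunction} then closes the argument immediately.
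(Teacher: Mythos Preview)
Your proposal is correct and follows essentially the same route as the paper: both invoke \cref{thm:criterion-for-quillen-adjunction}, verify $(1a)$ and $(1b)$ via the decomposition $f^*=(-)\otimes_{f^{-1}\Oo'}\Oo\circ f^{-1}$, use exactness of $f^{-1}$ together with preservation of flatness, and then apply \cref{lem:flat-is-tensor-x-split}. The only cosmetic difference is that for the flatness of $f^{-1}F$ on a site without enough points the paper points to \cite[Tag 05VD]{stacks-project} rather than \cite{Recktenwald2019}, and for $(1b)$ the paper spells out the base-change isomorphism $f^*F\otimes_\Oo A\cong f^{-1}F\otimes_{f^{-1}\Oo'}A$ explicitly rather than invoking ``extension of scalars preserves flatness'' as a slogan.
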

		\begin{proof}
			We check the conditions of \sref{Theorem}{thm:criterion-for-quillen-adjunction}. 
			\begin{enumerate}
				\item
					Let $F\in \Ff$ be a flat $\Oo'$-module and 
					\[
						0\to A\to B\to F\to 0
					\]
					be an exact sequence. 
					We have to show that
					\[
						0\to f^*A \to f^* B \to f^*F \to 0 
					\]
					is exact.
					Recall that $f^*X=\pb X\otimes_{\pb \Oo'} \Oo$. Because $\pb$ is an exact functor, the sequence
					\[
						0\to \pb A\to \pb B\to \pb F\to 0
					\]
					is exact.\\
					It remains to show that $\pb F$ is a flat $\pb \Oo'$-module on $X$, then the result follows from the previous lemma. In the case of ringed topological spaces, or sites with enough points, flatness can be checked on stalks and $(\pb F)_x=F_{f(x)}$ is a flat $(\pb \Oo')_x=\Oo'_{f(x)}$-module. For the general case the argument is more involved, see \cite[Tag 05VD]{stacks-project}.
				\item
					Let $F\in \Ff$ be a flat $\Oo'$ module and
					\[
						0\to A\to B\to C\to 0
					\]
					be a short exact sequence of $\Oo$-modules. It remains exact when we consider it as a sequence of $f^{-1}\Oo'$-modules. Note that 
					\[
						f^*F\otimes_\Oo A=f^{-1}F\otimes_{f^{-1}\Oo'}\Oo\otimes_\Oo A\cong f^{-1}F\otimes_{f^{-1}\Oo'}A
					\]
					and that
					\[
						0\to f^{-1}F\otimes_{f^{-1}\Oo'}A\to f^{-1}F\otimes_{f^{-1}\Oo'}B\to f^{-1}F\otimes_{f^{-1}\Oo'}C\to 0
					\]
					is exact because $f^{-1}F$ is a flat $f^{-1}\Oo'$ module. It follows that $f^*F$ is flat.
			\end{enumerate}

		\end{proof}
		
	\begin{coro}\label{cref:final-coro}
		Let $(X,\Oo)$, $(Y_i,\Oo'_i)$ (for $i=1,\ldots, n)$ be ringed site and let $f_i:(X,\Oo)\to (Y_i,\Oo'_i)$ be a collection of morphisms. Then
		\[
			f_1^*-\otimes\ldots\otimes f_n^*-: \Ch(\Sh(Y_1,\Oo_1'))\times \ldots\times \Ch(\Sh(Y_n,\Oo_n'))\to \Ch(\Sh(X,\Oo))
		\]
		is a Quillen adjunction in $n$ variables.
	\end{coro}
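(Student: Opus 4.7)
The plan is to apply \cref{thm:cot-main} with the flat cotorsion pair $(\Ff_i,\Cc_i)$ on $\Sh(Y_i,\Oo_i')$ for $i=1,\ldots,n$ and $(\Ff_0,\Cc_0)$ on $\Sh(X,\Oo)$; each of these induces the flat model structure by the preceding proposition. Writing $F:=f_1^*(-)\otimes_{\Oo}\cdots\otimes_{\Oo} f_n^*(-)$, this functor is an $n$-variable left adjoint, with right adjoints built out of the various $f_{i,*}$ and the internal $\Shom$ (compare \cite[Section~1.5]{Recktenwald2019}). The entire claim therefore reduces to verifying conditions $(0a_j)$ and $(0b)$ of \cref{prop:n-split-duality} for $F$.

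For condition $(0b)$, I would first invoke the computation in the proof of \cref{thm:pullback-pushforward-quillen} to see that each $f_i^*$ sends flat $\Oo_i'$-modules to flat $\Oo$-modules, and then observe that an iterated $\otimes_\Oo$ of flat $\Oo$-modules is again flat (since $-\otimes_\Oo M$ is exact for every flat $M$ and compositions of exact functors are exact). This yields $F(\Ff_1,\ldots,\Ff_n)\subset \Ff_0$.

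For condition $(0a_j)$, fix flat modules $F_i\in\Ff_i$ for $i\neq j$ together with a short exact sequence $0\to A_j\to B_j\to F_j\to 0$ with $F_j$ flat. I would establish the required exactness in two steps. First, exactly as in step $(1)$ of the proof of \cref{thm:pullback-pushforward-quillen}, the pulled-back sequence $0\to f_j^*A_j\to f_j^*B_j\to f_j^*F_j\to 0$ is exact and has flat third term. Second, by $(0b)$ the $\Oo$-module $M:=\bigotimes_{i\neq j} f_i^*F_i$ is flat, so tensoring the previous short exact sequence over $\Oo$ with $M$ preserves exactness. Regrouping the tensor factors produces precisely the sequence demanded by $(0a_j)$.

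I do not expect any substantial obstacle: the real work has been absorbed into \cref{thm:cot-main}, \cref{thm:pullback-pushforward-quillen} and \cref{lem:flat-is-tensor-x-split}, and the present corollary is essentially a packaging statement combining those results. The only point worth double-checking carefully is the existence and precise form of the right adjoints making $F$ into an $n$-variable left adjoint in the ringed-site setting, but this is standard once one combines the pushforwards $f_{i,*}$ with the internal $\Shom$.
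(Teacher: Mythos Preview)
Your proposal is correct and follows essentially the same approach as the paper: apply \cref{thm:cot-main}, verify $(0b)$ using that each $f_i^*$ preserves flatness, and verify $(0a_j)$ by first pulling back along $f_j$ and then tensoring. The only minor difference is in the last step of $(0a_j)$: the paper invokes \cref{lem:flat-is-tensor-x-split} (using that the third term $f_j^*F_j$ is flat, so tensoring with the possibly non-flat module $\bigotimes_{i\neq j} f_i^*F_i$ still preserves exactness), whereas you instead use that $M=\bigotimes_{i\neq j} f_i^*F_i$ is itself flat and appeal directly to the definition of flatness---both arguments are valid and equally short.
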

		\begin{proof}
			The construction of the right adjoints is omitted.
			We check the conditions in \sref{Theorem}{thm:cot-main}. 
			\begin{enumerate}
				\item 
					Fix some $j\in \set{1,\ldots, n}$ and let $F_i$ be flat $\Oo_i'$-modules for $i\neq j$. We have to show that when 
					\[
						0\to A_j\to B_j\to F_j\to 0
					\]
					is exact in $\Sh(Y_i,\Oo_i')$, then so is
					\[
						0\to f_1^*F_1\otimes\ldots f_j^*A_j\ldots\otimes f_n^*F_n\to f_1^*F_1\otimes\ldots f_j^*B_j\ldots\otimes f_n^*F_n\to f_1^*F_1\otimes\ldots f_j^*F_j\ldots\otimes f_n^*F_n\to 0
					\]
					In the proof of \cref{thm:pullback-pushforward-quillen} we have shown that
					\[
						0\to f_j^*A_j\to f_j^*B_j\to f_j^*F_j\to 0
					\]
					is exact and $f_j^*F_j$ is a flat $\Oo$-module. Hence the claim follows form \sref{Lemma}{lem:flat-is-tensor-x-split}.
				\item 
					If $F_i$ is flat $\Oo_i'$-module for all $i$, then $f_i^*F_i$ is a collection of flat $\Oo$-modules, therefore the tensor product $f_1^*F_1\otimes~\ldots f_j^*F_j\ldots\otimes~f_n^*F_n$ is also flat. 
			\end{enumerate}
		\end{proof}
		
	\begin{emp}
		\emph{Fibrations} and \emph{opfibrations} are a natural framework to study families of (model) categories. One can define a suitable \multi{category} of all complexes of modules on ringed sites, which is then fibred and opfibred over the category of ringed sites (more precisely over $\Rsite^{op}$). The previous Corollary then states that this lifts to a bifibration of \multimodelcategories{} in the sense of \cite[Definition 5.1.3]{Hoermann2017}. See \cite{Recktenwald2019} for details.
	
	\end{emp}

\newpage
\thispagestyle{empty}
\raggedright{
	\bibliographystyle{alpha}
	\bibliography{CotPairs.bib}
}


\end{document}